\newtheorem{theorem}{Theorem}[section]
\newtheorem{proposition}[theorem]{Proposition}
\newtheorem{definition}[theorem]{Definition}
\newtheorem{corollary}[theorem]{Corollary}
\newtheorem{lemma}[theorem]{Lemma}
\newtheorem{remark}[theorem]{Remark}
\newcommand{\mL}{{\cal L}}
\newcommand{\R}{\mathbb{R}}
\newcommand{\beq}{\begin{equation}}
\newcommand{\eeq}{\end{equation}}
\newcommand{\C}{\mathbb{C}}
\newcommand{\N}{\mathbb{N}}
\newcommand{\mA}{{\mathcal A}}
\newcommand{\mbD}{{\mathbb{D}}}
\begin{document}

\title{\bf On maximal regularity for the Cauchy-Dirichlet mixed parabolic problem with fractional time derivative}

\author{Davide Guidetti\thanks{The author is member of GNAMPA of Istituto Nazionale di Alta Matematica}\\
\\
Dipartimento di Matematica,\\
Universit\`a di Bologna\\
Piazza di Porta S. Donato 5,\\
40126 Bologna, Italy. \\
E-mail: davide.guidetti@unibo.it
}

\date{ }

\maketitle

\begin{abstract}
We prove two maximal regularity results in spaces of continuous and H\"older continuous functions, for a mixed linear Cauchy-Dirichlet problem  with a fractional time derivative $\mathbb{D}_t^\alpha$. This derivative is intended in the sense of Caputo and $\alpha$ is taken in $(0, 2)$. In case $\alpha = 1$, we obtain maximal regularity results for mixed parabolic problems already known in mathematica literature. 
\end{abstract}

{\bf Keywords}:  Fractional time derivatives, mixed Cauchy-Dirichlet problem, maximal regularity. 
\noindent
{\bf 2010 MSC}: 34K37, 35K20.

\section{Introduction} \label{se1}

The aim of this paper is the study the following mixed Cauchy-Dirichlet problem: 

\begin{equation} \label{eq1.1}
\left\{
\begin{array}{l}
\mathbb{D}^\alpha_{C(\overline \Omega)} u(t,x) =  A(x,D_x) u(t,x) + f(t,x), \quad t \in [0, T], x \in \Omega, \\ \\
u(t,x') = g(t,x') , \quad (t,x') \in [0, T] \times \partial \Omega, \\ \\
D_t^k u(0,x) = u_k(x), \quad x \in \overline \Omega, k \in \N_0, k < \alpha, 
\end{array}
\right. 
\end{equation}
with $\mathbb{D}^\alpha_{C(\overline \Omega)}u$ fractional time derivative in the sense of Caputo of order $\alpha$ in $(0, 2)$, $A(x,D_x)$ elliptic in the bounded domain $\Omega$ and Dirichlet (not necessarily homogeneous) conditions on the boundary $\partial \Omega$ of $\Omega$ (precise assumptions will be stated in the following, see (A1)-(A4)).  

Mixed boundary value problems with fractional time derivatives have attracted the attention of researchers in these latest time.  An application of a nonlinear version of (\ref{eq1.1}) to a problem in viscoelasticity is mentioned in \cite{ClLoSi1} (see also the references in this paper).  Explicit solutions in cases with simple geometries and various boundary conditions  where found in many situations (see, for example, \cite{Po1}, with its bibliography). A general discussion of several mathematical models of heat diffusion (even with fractional derivatives) is contained in \cite{FaGiMo1}. 

We prove here two maximal regularity results which are already known in the case $\alpha = 1$.

The first of these results (Theorem \ref{th1.1})  prescribes necessary and sufficient conditions on the data $f$, $g$, $u_k$ ($k \in \N_0$, $k < \alpha$), in order that, given $\theta$ in $(0, 2) \setminus \{1\}$ with $\alpha \theta < 2$,  there exists a unique solution $u$ which is continuous (as a function of $t$) with values in $C^2(\overline \Omega)$, bounded with values in $C^{2+\theta}(\overline \Omega)$ and such that $\mathbb{D}^\alpha_{C(\overline \Omega)}u$ and $A(\cdot,D_x)u$ are continuous with values in $C(\overline \Omega)$ and bounded with values in $C^{\theta}(\overline \Omega)$. The case $\alpha = 1$ was proved in \cite{Gu4} and generalized to general mixed parabolic problems in 
\cite{Gu1}. Related questions  were discussed in  \cite{SivW1}. 

The second of these results (Theorem \ref{th1.2}) prescribes necessary and sufficient conditions on the data $f$, $g$, $u_k$ ($k \in \N_0$, $k < \alpha$), in order that, given $\theta$ in $(0, 2) \setminus \{1\}$ with $\alpha \theta < 2$,  there exists a unique solution $u$ which is continuous (as a function of $t$) with values in $C^2(\overline \Omega)$, bounded with values in $C^{2+\theta}(\overline \Omega)$ and such that $\mathbb{D}^\alpha_{C(\overline \Omega)}u$ and $A(\cdot,D_x)u$ belong to the class $C^{\frac{\alpha \theta}{2}, \theta}([0, T] \times \overline \Omega)$. The case $\alpha = 1$ is classical and is completely illustrated in \cite{LaSoUr1}  and \cite{Lu1}.  See also \cite{LuSivW1} for a semigroup approach.  We are not aware of generalizations to the case $\alpha \neq 1$. 

Our study might be the starting point to consider nonlinear problems by linearization procedures.

We quote other papers connected with the content of this one.

In \cite{EiKo1} the Cauchy problem in $\R^n$ is studied in case $\alpha \in (0, 1]$.  A fundamental solution is constructed. The simplest case, namely the case with $n = 1$ and the elliptic operator with a constant coefficients, is studied in \cite{Ma1}. 

In \cite{KeLiWa1}  the authors consider the abstract Cauchy problem 
\begin{equation}\label{eq1.2A}
\left\{ \begin{array}{ll}
\mathbb{D}^\alpha_{X} u (t) = A u(t) + f(t), & t > 0, \\ \\
u(0) = u_0,
\end{array}
\right. 
\end{equation}
with $\alpha$ in $(0, 1]$ (we shall precise in Definition \ref{de2.2} the meaning of the expression $\mathbb{D}^\alpha_{X} u$). They consider the case that $A$ is the infinitesimal generator of a $\beta-$times integrated semigroup in the Banach space $X$. Their results are also applied to our problem (see their Example 8.3), with $X = C(\overline \Omega) \times C(\partial \Omega)$, but they do not seem to be of maximal regularity. 

Maximal regularity results are discussed in \cite{ClGrLo1} and \cite{ClGrLo2} for the general abstract system 
 
 \begin{equation} \label{eq1.3}
\left\{
\begin{array}{l}
\mathbb{D}^\alpha_{X} u(t) =  Au(t) + f(t), \quad t \in [0, T], \\ \\
D_t^k u(0) = u_k,  k \in \N_0, k < \alpha, 
\end{array}
\right. 
\end{equation}
in case $\alpha \in (0, 2)$, $-A$ is a sectorial operator of type less than $(1 - \frac{\alpha}{2}) \pi$ (see Definition \ref{de2.1}). Two topics are discussed: 

(I) necessary and sufficient conditions on the data, in order that $\mathbb{D}^\alpha_{X} u$ and $Au$ are bounded with values in the real interpolation space $(X, D(A))_{\theta,\infty}$ ($0 < \theta < 1$); 

(II) necessary and sufficient conditions on the data, in order that $\mathbb{D}^\alpha_{X} u$ and $Au$ are both in the space of H\"older continuous functions $C^\beta ([0, T]; X)$. 

In \cite{ClGrLo1}  the case $\alpha \in (0, 1]$ is considered. In this case the results found are essentially complete. The case $\alpha \in (1, 2)$ is considered in \cite{ClGrLo2}. Here only sufficient conditions are prescribed. 
In order to prove Theorem \ref{th1.2}, we shall consider the case (II), with $\beta < \alpha$. It turns out that the sufficient conditions prescribed in \cite{ClGrLo2} are also necessary. This is proved in the preprint \cite{Gu3}. In the following (see Theorems \ref{th2.1A}-\ref{th2.1B}) we shall come back to these results, as we shall need them. Here we mention only the fact that, given $\theta$ in (say) $(0, 1)$, the operator $-A$ such that
$$
\left\{\begin{array}{l}
D(A) = \{u \in C^{2+\theta}(\overline \Omega) : u_{|\partial \Omega} = 0\} \\ \\
Au = A(\cdot, D_x) u. 
\end{array}
\right. 
$$
is not sectorial in the Banach space $C^\theta(\overline \Omega)$ (see \cite{Lu1}, Example 3.1.33): the best available estimate is (\ref{eq2.8}). So, even in the case of homogeneous boundary conditions, the results of \cite{ClGrLo1} and  \cite{ClGrLo2} are not sufficient for our purposes.

Other results of maximal regularity for (\ref{eq1.3}) are discussed in \cite{ClLoSi1} and in \cite{Ba1} (see also \cite{Ba2}). Finally, maximal regularity for equations involving versions of the Caputo derivative in $\R$, in spaces of order continuous functions on the line are given in  \cite{Po2} and \cite{KeLi1}.

Now we introduce some notations which we are going to use in the paper.

 If $\alpha \in \R$,  $[\alpha]$ will indicate the maximum integer less or equal than $\alpha$. $\R^+$ will indicate the set of (strictly) positive real numbers. If $\lambda \in \C \setminus \{0\}$.
 
  If $X$ is a complex Banach space with norm $\|\cdot\|$,  $B([0, T]; X)$ will indicate the class of functions with values in $X$ with domain $[0, T]$; if $B$ is a (generally unbounded) linear operator from 
$D(B) \subseteq X$ to $X$, $\rho (B)$ will indicate the resolvent set of $B$. If $A$ is a closed operator in $X$, $A : D(A) (\subseteq X) \to X$, $D(A)$, equipped with the norm
$$
\|x\|_{D(A)} : = \|x\| + \|Ax\|
$$
is a Banach space. 

Given a function $f$ with domain $\overline \Omega$, with $\Omega \subseteq \R^n$, $\gamma f$ will indicate the trace of $f$ on the boundary $\partial \Omega$ of $\Omega$. 

If $X_0, X_1$ are Banach spaces such that $X_1 \hookrightarrow X_0$, $\xi \in (0, 1)$ and $p \in [1, \infty]$, we shall indicate with $(X_0, X_1)_{\xi,p}$ the corresponding real interpolation space. We shall freely use the basic facts concerning real interpolation theory (see, for example, \cite{Lu1}, \cite{Tr2}). If $X_0 \hookrightarrow X \hookrightarrow X_0$, we shall write $X \in J_\xi(X_0,X_1)$,  $X \in K_\xi(X_0,X_1)$  if $X \hookrightarrow (X_0, X_1)_{\xi,\infty}$. 

If $\beta \in \N_0$ and $\Omega$ is an open, bounded subset of $\R^n$, we shall indicate with $C^\beta(\overline \Omega)$ the class of complex valued  functions which are continuous in $\overline \Omega$, together with their derivatives (extensible by continuity to $\overline \Omega$) of order not exceeding $\beta$. If $\beta \in \R^+ \setminus \N$, $C^\beta(\overline \Omega)$ will indicate the class of functions in $C^{[\beta]}(\overline \Omega)$ whose derivatives of order $[\beta]$ are H\"older continuous of order $\beta - [\beta]$ in $\overline \Omega$. These definitions admit natural extensions to function with values in a Banach space $X$. In this case, we shall use the notation 
$C^\beta (\overline \Omega; X)$ (in particular $C^\beta ([a, b]; X)$, in case $\Omega = (a, b) \subseteq \R$). By local charts, if $\partial \Omega$ is sufficiently regular, we can consider the spaces $C^\beta(\partial \Omega)$. All these classes will be assumed to be equipped of natural norms. We shall use the notation
$$
C^\beta_0(\overline \Omega):= \{f \in C^\beta(\overline \Omega): \gamma f = f_{|\partial \Omega} = 0\}. 
$$
If $\alpha, \beta \in [0, \infty)$, $T \in \R^+$ and $\Omega$ is an open bounded subset of $\R^n$, we set
$$
C^{\alpha,\beta}([0, T] \times \overline \Omega):= C^\alpha([0, T]; C(\overline \Omega)) \cap B([0, T]; C^\beta(\overline \Omega)). 
$$
An analogous meaning will have $C^{\alpha,\beta}([0, T] \times \partial \Omega)$. If $X$ is a Banach space, $Lip([0, T]; X)$ will indicate the class of Lispchitz continuous functions from $[0, T]$, equipped with a natural norm. 

Let $\phi \in (0, \pi)$,  $R \in [0, \infty)$. We shall indicate with $\Gamma (\phi,R)$ a piecewise $C^1$ path, describing 
$$\{\lambda \in \C : |\lambda| \geq R, |Arg(\lambda)| = \phi\} \cup \{\lambda \in \C : |\lambda| = R, |Arg(\lambda)| \leq \phi\},$$k
oriented from $\infty e^{-i\phi}$ to $\infty e^{i\phi}$. We shall write $\lambda \in \Gamma (\phi,R)$ to indicate that $\lambda$ belongs to the range of $\Gamma (\phi,R)$. 

Finally, $C$ will indicate a positive real constant we are not interested to precise (the meaning of which may be different from time to time). In a sequence of inequalities, we shall write $C_1, C_2, \dots$. 

After these preliminaries, we list the basic assumptions we are going to work with. We assume that: 

{\it \medskip

(A1) $\Omega$ is an open, bounded subset in $\R^n$ lying on one side of its boundary $\partial \Omega$, which is a $n-1-$submanifold of $\R^n$ of class $C^{2+\theta}$, with $\theta \in (0, 2) \setminus \{1\}$. 

\medskip

(A2) $\alpha \in (0, 2)$, $A(x,D_x) = \sum_{|\rho| \leq 2} a_\rho(x) D_x^\rho$, with $a_\rho \in C^\theta(\overline \Omega)$, $a_\rho$ complex valued; $A(x,D_x)$ is assumed to be elliptic, in the sense that $\sum_{|\rho| = 2} a_\rho(x) \xi^\rho \neq 0$
$\forall \xi \in \R^n \setminus \{0\}$; we suppose, moreover, that
$$
|Arg( \sum_{|\rho| = 2} a_\rho(x) \xi^\alpha)| <  (1- \frac{\alpha}{2})\pi, \quad \forall x \in \overline \Omega, \forall \xi \in \R^n \setminus \{0\}. 
$$

\medskip

(A3)  $\mathbb{D}^\alpha_{C(\overline \Omega)} u$ is the fractional Caputo derivative of $u$ with respect to $t$ with values in $C(\overline \Omega)$ (see the following Definition \ref{de2.2}). 

\medskip

\medskip

(A4) $\alpha \theta < 2$. 

}

\medskip

We are looking for necessary and sufficient conditions in order that (\ref{eq1.1}) has a unique solution $u$ such that: 

\medskip
{\it

 (B1) $\mathbb{D}^\alpha_{C(\overline \Omega)} u$  exists and belongs to $C([0, T]; C(\overline \Omega)) \cap B([0, T]; C^\theta(\overline \Omega))$. 
 
 \medskip
 
 (B2) $u \in C([0, T]; C^2(\overline \Omega)) \cap B([0, T]; C^{2+\theta}(\overline \Omega))$. 
 
 }
 
 \medskip
 
 We want to prove the following

  \begin{theorem}\label{th1.1}
 Suppose that the assumptions (A1)-(A4) are fulfilled. Then the following conditions are necessary and sufficient, in order that (\ref{eq1.1}) has a unique solution $u$ satisfying (B1)-(B2): 
 
 (I) $f \in C([0, T]; C(\overline \Omega)) \cap B([0, T]; C^\theta(\overline \Omega))$.
 
 (II) $u_0 \in C^{2+\theta}(\overline \Omega)$ and, in case $\alpha \in (1, 2)$, $u_1 \in C^{\theta + 2(1 - \frac{1}{\alpha})}(\overline \Omega)$. 
 
 (III) $g \in C([0, T]; C^2(\partial \Omega)) \cap B([0, T]; C^{2+\theta}(\partial \Omega))$, $\mathbb{D}^\alpha_{C(\partial \Omega)} g$ exists and belongs to $C([0, T]; C(\partial \Omega)) \cap B([0, T]; C^\theta(\partial \Omega))$; 
 
 (IV) $\gamma u_0 = g(0)$ and, in case $\alpha \in (1, 2)$, $\gamma u_1 = D_t g(0)$ 
 
 (V) $\gamma f - \mathbb{D}^\alpha_{C(\partial \Omega)} g \in C^{\frac{\alpha \theta}{2}} ([0, T]; C(\partial \Omega))$. 
 
 (VI) $\gamma[A(\cdot,D_x) u_0 + f(0)] = \mbD^\alpha_{C(\partial \Omega)} g(0)$. 

 \end{theorem}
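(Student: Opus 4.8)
The plan is to establish the two implications separately. The necessity of (I)--(VI) for a solution satisfying (B1)--(B2) is obtained by direct manipulation of the equation and its boundary trace; the sufficiency, which is the substantial part, is obtained by successive subtractions reducing (\ref{eq1.1}) to a problem with homogeneous boundary data and vanishing initial data, for which the solution is represented through the operator families attached to the realization of $A(\cdot,D_x)$ in $C(\overline\Omega)$.

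For necessity, assume $u$ solves (\ref{eq1.1}) and satisfies (B1)--(B2). Condition (I) is immediate: since $u \in C([0,T];C^2(\overline\Omega)) \cap B([0,T];C^{2+\theta}(\overline\Omega))$ and the coefficients $a_\rho$ lie in $C^\theta(\overline\Omega)$, the function $A(\cdot,D_x)u$ inherits the regularity in (B1), whence so does $f = \mathbb{D}^\alpha_{C(\overline\Omega)}u - A(\cdot,D_x)u$. Evaluating (B2) at $t=0$ gives $u_0 \in C^{2+\theta}(\overline\Omega)$; the assertion on $u_1$ in (II) is a statement about the initial time-trace of $D_t u$ and follows from the characterization of such traces for functions with the mixed regularity (B1)--(B2) in terms of real interpolation spaces. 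Applying the trace operator $\gamma$ to the equation and using that $\gamma$ commutes with $\mathbb{D}^\alpha_t$ yields (III), and at $t=0$ the compatibility relations (IV) and (VI). Finally (V) is obtained from $\gamma f - \mathbb{D}^\alpha_{C(\partial\Omega)}g = -\gamma\bigl(A(\cdot,D_x)u\bigr)$: the H\"older-in-time regularity $C^{\alpha\theta/2}$ of this boundary trace is exactly the time regularity produced on $\partial\Omega$ by a function of mixed type $C([0,T];C(\overline\Omega)) \cap B([0,T];C^\theta(\overline\Omega))$ under the fractional parabolic scaling $t\sim|x|^{2/\alpha}$.

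For sufficiency, I would first reduce to homogeneous boundary data. Using (III), (IV) and the compatibility (VI), I would construct a lifting $w$ with $\gamma w(t)=g(t)$, carrying the spatial regularity of (B2) and such that $\mathbb{D}^\alpha_{C(\overline\Omega)}w$ exists with the regularity of (B1); here (V) is precisely what guarantees that $w$ can be chosen so that the modified forcing $\tilde f := f + A(\cdot,D_x)w - \mathbb{D}^\alpha_{C(\overline\Omega)}w$ lies in $C([0,T];C(\overline\Omega)) \cap B([0,T];C^\theta(\overline\Omega))$ and satisfies $\gamma\tilde f(0)=0$. Setting $v=u-w$ turns (\ref{eq1.1}) into the same problem with $g=0$, forcing $\tilde f$, and initial data $u_k - D_t^k w(0)$. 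Next I would subtract the contributions of the initial data by means of the Mittag--Leffler type families $S_\alpha(t)$, $P_\alpha(t)$ attached to the sectorial operator $-A$, where $A$ is the realization of $A(\cdot,D_x)$ with homogeneous Dirichlet conditions in $C(\overline\Omega)$; by (II) the corrected $u_0$ lies in $D(A)$ and $u_1$ in the correct real interpolation space, so the subtracted terms carry exactly the regularity (B1)--(B2). One is then left with $\mathbb{D}^\alpha_{C(\overline\Omega)}v = Av + h$, $\gamma v=0$, $D_t^k v(0)=0$, with $h \in C([0,T];C(\overline\Omega)) \cap B([0,T];C^\theta(\overline\Omega))$ and $h(0)\in C_0(\overline\Omega)$.

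Solving this reduced problem with the full regularity (B1)--(B2) is the main obstacle. The temptation is to invoke the abstract maximal regularity of type (II) recalled in Theorems \ref{th2.1A}--\ref{th2.1B}, but, as stressed after (\ref{eq1.3}), the realization $-A$ is \emph{not} sectorial in $C^\theta(\overline\Omega)$: only the weaker resolvent bound (\ref{eq2.8}) holds there. I would therefore work in the pair $\bigl(C(\overline\Omega),D(A)\bigr)$, where $-A$ is sectorial, writing $v(t)=\int_0^t P_\alpha(t-s)h(s)\,ds$ and the analogous convolutions for $Av$ and $\mathbb{D}^\alpha v$. Continuity with values in $C(\overline\Omega)$ of $\mathbb{D}^\alpha v$ and $Av$ follows from the $C(\overline\Omega)$-theory; to upgrade to boundedness with values in $C^\theta(\overline\Omega)$ I would estimate the H\"older seminorm of $Av(t)$ directly on the convolution, splitting the kernel and using the sharp bound (\ref{eq2.8}) together with the mapping properties of $A$ between $C^\theta(\overline\Omega)$ and the real interpolation spaces $(C(\overline\Omega),D(A))_{\theta/2,\infty}$, which for the second order realization reproduce the relevant H\"older spaces. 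The delicate points are the behaviour of the Mittag--Leffler kernels near $s=t$ and near $s=0$, and the fact that the compatibility $h(0)\in C_0(\overline\Omega)$ removes the boundary obstruction to $C^\theta$-regularity at $t=0$; this last is where (V)--(VI) re-enter. Uniqueness finally follows from the injectivity of the solution operator of the homogeneous problem in $C(\overline\Omega)$, itself a consequence of sectoriality there.
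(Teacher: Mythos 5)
Your overall architecture matches the paper's: necessity by taking traces and initial-time traces (the paper's Lemma \ref{le1.4} and Lemma \ref{le3.2}), sufficiency by lifting the boundary datum with an extension operator, subtracting the initial data via contour-integral (Mittag--Leffler type) families, and upgrading to $C^\theta$ spatial regularity by direct kernel estimates built on (\ref{eq2.8}), since $-A$ is not sectorial in $C^\theta(\overline\Omega)$. However, there is a genuine gap in your reduction: the reduced problem you arrive at retains only $h \in C([0,T];C(\overline\Omega)) \cap B([0,T];C^\theta(\overline\Omega))$ and $h(0) \in C_0(\overline\Omega)$, and this is \emph{not} enough. The condition that must be carried through the reduction is the full time-H\"older regularity of the boundary trace of the forcing, $\gamma h \in C^{\frac{\alpha\theta}{2}}([0,T];C(\partial\Omega))$ (this is what condition (V), combined with $A(\cdot,D_x)v \in C^{\frac{\alpha\theta}{2}}([0,T];C(\overline\Omega))$ for the lifting $v = Rg$, actually delivers). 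This trace condition is \emph{necessary} for the homogeneous-boundary problem --- it is condition (III) of the paper's Corollary \ref{co3.1} --- so the reduced problem as you state it is in general unsolvable within the class (B1)--(B2), and no amount of kernel estimation can close the argument. Your remark that ``$h(0)\in C_0(\overline\Omega)$ removes the boundary obstruction'' mislocates the role of (V)--(VI): the issue is not only a compatibility at $t=0$ but a regularity requirement on all of $[0,T]$.

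Quantitatively, the failure shows up exactly where you propose to work. The sharp bound (\ref{eq2.8}) carries the boundary term $|\lambda|^{\frac{\theta}{2}}\|\gamma f\|_{C(\partial\Omega)}$, and in the paper's Proposition \ref{pr3.1} it is applied to the differences $f(s)-f(t)$: the resulting term $\|\gamma f(s)-\gamma f(t)\|_{C(\partial\Omega)} \leq C(t-s)^{\frac{\alpha\theta}{2}}$ feeds into Lemma \ref{le3.8} with $b = \frac{\alpha\theta}{2}$ and yields the required decay $\xi^{-\frac{\theta}{2}}$ in the resolvent characterization of $C_0^\theta(\overline\Omega)$; with mere boundedness of the trace one gets $b=0$, hence an $O(1)$ bound in $\xi$, and no $C^\theta$ regularity follows. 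The same omission undermines your appeal to ``the $C(\overline\Omega)$-theory'' for strict solvability: as in the paper's Lemma \ref{le3.3A}, one must split $h = (h - R\gamma h) + R\gamma h$ and apply Corollary \ref{co2.3} to the zero-trace part and Corollary \ref{co2.2} to the lifted part, and the latter requires precisely $\gamma h \in C^{\frac{\alpha\theta}{2}}([0,T];C(\partial\Omega))$ with $\gamma h(0)=0$. The fix is straightforward --- carry (V) through the reduction, as in the paper's hypotheses (C1)--(C3) --- after which your outline aligns with the paper's proof.
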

 
 We are also looking for necessary and sufficient conditions in order that (\ref{eq1.1}) has a unique solution $u$ such that: 

\medskip
{\it (D1) $u \in C([0, T]; C^2(\overline \Omega)) \cap B([0, T]; C^{2+\theta}(\overline \Omega))$; 
 
 \medskip
 
 (D2) 
 }
 
 \medskip
 
  We want to prove the following

  \begin{theorem}\label{th1.2}
 Suppose that the assumptions (A1)-(A4) are fulfilled. Then the following conditions are necessary and sufficient, in order that (\ref{eq1.1}) has a unique solution $u$ satisfying (D1)-(D2): 
 
 (I) $f \in C^{\frac{\alpha \theta}{2}, \theta}([0, T] \times \overline \Omega)$.
 
 (II) $u_0 \in C^{2+\theta}(\overline \Omega)$ and, in case $\alpha \in (1, 2)$, $u_1 \in C^{\theta + 2(1 - \frac{1}{\alpha})}(\overline \Omega)$. 
 
 (III) $g \in C([0, T]; C^2(\partial \Omega)) \cap B([0, T]; C^{2+\theta}(\partial \Omega))$, $\mathbb{D}^\alpha_{C(\partial \Omega)} g$ exists and belongs to $C^{\frac{\alpha \theta}{2}, \theta}([0, T] \times \partial \Omega)$; 
 
 (IV) $\gamma u_0 = g(0)$ and, in case $\alpha \in (1, 2)$, $\gamma u_1 = D_t g(0)$. 

(V) $\gamma[A(\cdot,D_x) u_0 + f(0)] = \mbD^\alpha_{C(\partial \Omega)} g(0)$. 

 \end{theorem}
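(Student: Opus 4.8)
The plan is to split the argument into necessity and sufficiency and to use Theorem \ref{th1.1} as the backbone. The only feature separating the present statement from Theorem \ref{th1.1} is that $\mbD^\alpha_{C(\overline\Omega)}u$ and $A(\cdot,D_x)u$ must now be H\"older continuous in time with values in $C(\overline\Omega)$, since
$$
C^{\frac{\alpha\theta}{2},\theta}([0,T]\times\overline\Omega)=C^{\frac{\alpha\theta}{2}}([0,T];C(\overline\Omega))\cap B([0,T];C^\theta(\overline\Omega)),
$$
and the space-H\"older factor on the right, together with (D1), is exactly what Theorem \ref{th1.1} already delivers. So the real task is to produce the missing time-H\"older factor $C^{\frac{\alpha\theta}{2}}([0,T];C(\overline\Omega))$.

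For necessity I would argue directly. From the equation $f=\mbD^\alpha_{C(\overline\Omega)}u-A(\cdot,D_x)u$, so (I) is immediate from (D2). Since $\gamma$ acts in space and $\mbD^\alpha$ in time, the two commute on continuous functions, giving $\mbD^\alpha_{C(\partial\Omega)}g=\gamma\,\mbD^\alpha_{C(\overline\Omega)}u\in C^{\frac{\alpha\theta}{2},\theta}([0,T]\times\partial\Omega)$; combined with the trace of (D1) this yields (III). Evaluating (D1) at $t=0$, together with the trace/interpolation description of the initial data of the fractional Cauchy problem recorded in Theorems \ref{th2.1A}-\ref{th2.1B}, gives $u_0\in C^{2+\theta}(\overline\Omega)$ and, for $\alpha\in(1,2)$, $u_1\in C^{\theta+2(1-\frac1\alpha)}(\overline\Omega)$, i.e.\ (II); taking traces produces (IV). Finally, letting $t\to0^+$ in the equation and applying $\gamma$ gives $\mbD^\alpha_{C(\partial\Omega)}g(0)=\gamma[A(\cdot,D_x)u_0+f(0)]$, which is (V).

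For sufficiency I would first reduce to homogeneous boundary data. Fixing a space extension operator $E$, continuous from $C^s(\partial\Omega)$ to $C^s(\overline\Omega)$ for every relevant $s$, and setting $G(t):=E\,g(t)$, the operator $E$ is time-independent and hence commutes with $\mbD^\alpha$, so $\gamma G=g$, $\mbD^\alpha_{C(\overline\Omega)}G=E\,\mbD^\alpha_{C(\partial\Omega)}g$, and $G\in C([0,T];C^2(\overline\Omega))\cap B([0,T];C^{2+\theta}(\overline\Omega))$ with $\mbD^\alpha G$ of class $C^{\frac{\alpha\theta}{2},\theta}$. Putting $v:=u-G$ turns (\ref{eq1.1}) into the same problem with homogeneous Dirichlet data, forcing $\tilde f:=f+A(\cdot,D_x)G-\mbD^\alpha_{C(\overline\Omega)}G$ and initial values $v_k:=u_k-D_t^kG(0)$, where (IV) guarantees $\gamma v_k=0$. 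A second subtraction of a time-smooth function carrying the $v_k$ reduces further to $v_k=0$, with (V) being exactly what forces the new forcing term to vanish at $t=0$ in the trace sense, i.e.\ to satisfy the compatibility hypothesis of the abstract time-H\"older result. I would then apply case (II) of Theorems \ref{th2.1A}-\ref{th2.1B} to $-A$ in $X=C(\overline\Omega)$, where by (A2) the Dirichlet realization is sectorial of type $<(1-\frac\alpha2)\pi$, with $\beta=\frac{\alpha\theta}{2}<\alpha$ and $\tilde f\in C^\beta([0,T];C(\overline\Omega))$; this yields $\mbD^\alpha v,\ Av\in C^{\frac{\alpha\theta}{2}}([0,T];C(\overline\Omega))$, and hence the same for $u$. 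Separately, (I)-(V) imply the hypotheses of Theorem \ref{th1.1} (in particular its condition (V) now holds automatically, since $\gamma f$ and $\mbD^\alpha_{C(\partial\Omega)}g$ are separately $\frac{\alpha\theta}{2}$-H\"older in time), so Theorem \ref{th1.1} supplies (D1) and the space factor $\mbD^\alpha u,\ Au\in B([0,T];C^\theta(\overline\Omega))$. Intersecting the two factors gives (D2), and uniqueness is inherited from Theorem \ref{th1.1}.

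The main obstacle I anticipate is the non-sectoriality of $-A$ in $C^\theta(\overline\Omega)$ recorded in (\ref{eq2.8}): it blocks any direct use of the abstract theory in the H\"older space and is precisely why the target norm must be split into a time-H\"older factor over $C(\overline\Omega)$, where $-A$ \emph{is} sectorial, and a space-H\"older factor obtained separately from Theorem \ref{th1.1}. The remaining delicate point is the bookkeeping in the two reductions: checking that $\tilde f$ retains full parabolic H\"older regularity -- in particular that $A(\cdot,D_x)G=A\,E\,g$ is $\frac{\alpha\theta}{2}$-H\"older in time with values in $C(\overline\Omega)$, which I would read off from the fractional-integral representation of $g$ in terms of $\mbD^\alpha_{C(\partial\Omega)}g$ and the initial data -- and that the compatibility conditions transform correctly under the subtractions.
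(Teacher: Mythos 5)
Your overall architecture is the paper's: necessity by taking traces and observing that (D1)--(D2) imply (B1)--(B2) so that Theorem \ref{th1.1} applies, and sufficiency by subtracting the extension $v := Rg$ (your $G = Eg$ is exactly the operator of Lemma \ref{le2.1} (IV)), checking that the transformed problem with homogeneous Dirichlet data satisfies the hypotheses of the abstract H\"older maximal regularity result with $\beta = \frac{\alpha\theta}{2}$ in $X = C(\overline\Omega)$, and intersecting the resulting time-H\"older regularity with the space regularity already furnished by the Theorem \ref{th1.1} machinery. Your remark that condition (V) of Theorem \ref{th1.1} holds automatically is correct and is implicitly used in the paper. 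However, two steps need repair.

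First, the ``second subtraction of a time-smooth function carrying the $v_k$'' fails for $\alpha \in (1,2)$. The natural choice, subtracting $t \mapsto t v_1$ with $v_1 = u_1 - R\gamma u_1$, adds $t\,A(\cdot,D_x)v_1$ to the forcing term; but $v_1$ lies only in $C_0^{\theta + 2(1-\frac{1}{\alpha})}(\overline\Omega)$, and by (A4) one has $\theta + 2 - \frac{2}{\alpha} < 2$, so $A(\cdot,D_x)v_1$ is in general not a continuous function and the modified forcing leaves $C^{\frac{\alpha\theta}{2}}([0,T];C(\overline\Omega))$. The paper never performs this reduction: Corollary \ref{co2.2} (equivalently Theorem \ref{th2.1A}) is applied directly with the nonzero initial data $u_0 - R\gamma u_0$ and $u_1 - R\gamma u_1$, the latter entering only through the interpolation condition $u_1 \in (X, D(A))_{1-\frac{1-\beta}{\alpha},\infty} = C_0^{\theta + 2(1-\frac{1}{\alpha})}(\overline\Omega)$, which is exactly what the subtraction of $Rg$ delivers. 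So the fix is simply to delete that step. Second, the claim that $A(\cdot,D_x)Rg \in C^{\frac{\alpha\theta}{2}}([0,T];C(\overline\Omega))$ can be ``read off from the fractional-integral representation'' of $g$ does not work as stated: one cannot commute $A(\cdot,D_x)$ with the fractional integral of $R\,\mathbb{D}^\alpha_{C(\partial\Omega)}g$, whose values are only of class $C^\theta$. The paper's mechanism is Lemma \ref{le1.4} (III): since $\mathbb{D}^\alpha_{C(\overline\Omega)}v$ is bounded with values in $C^\theta(\overline\Omega)$, Lemma \ref{le1.3} gives $v \in C^\alpha([0,T];C^\theta(\overline\Omega))$ (for $\alpha \neq 1$), and interpolation with $v \in B([0,T];C^{2+\theta}(\overline\Omega))$, via $C^2(\overline\Omega) \in J_{1-\theta/2}(C^\theta(\overline\Omega), C^{2+\theta}(\overline\Omega))$, yields $v \in C^{\frac{\alpha\theta}{2}}([0,T];C^2(\overline\Omega))$, whence the required regularity of $A(\cdot,D_x)v$; the case $\alpha \in (1,2)$ additionally needs the Lipschitz estimate (\ref{eq3.1A}). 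With these two corrections your argument coincides with the paper's proof.
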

 
 The paper is organized in the following way: Section \ref{se2} contains a series of preliminaries results that we shall use in the sequel. In particular, we have put here the definition of Caputo derivative (Definition \ref{de2.2}), with a description of its main properties. In the final part we briefly discuss we abstract system (\ref{eq1.3}), with reference to the results in \cite{ClGrLo1} and \cite{ClGrLo2}. Section \ref{se3} contains a proof of Theorem \ref{th1.1}. Finally, Section \ref{se4} is dedicated to the proof of Theorem \ref{th1.2}.

\section{Preliminaries} \label{se2}

\setcounter{equation}{0}

We begin by recalling with some properties of the class of spaces $C^\beta (\overline \Omega)$ ($0 \leq \beta \leq 2+\theta$). 

\begin{lemma}\label{le2.1}
Let $\Omega$ be an open subset in $\R^n$ as in (A1). Let $0 \leq \beta_0 < \beta_1 \leq 2+\theta$. Then:

(I) if $\xi \in (0, 1)$,  
$$
C^{(1-\xi)\beta_0 + \xi \beta_1}(\overline \Omega) \in J_\xi(C^{\beta_0}(\overline \Omega), C^{\beta_1}(\overline \Omega)) \cap K_\xi (C^{\beta_0}(\overline \Omega), C^{\beta_1}(\overline \Omega)) ; 
$$
(II) if $(1 - \xi) \beta_0 + \xi \beta_1 \not \in \N$,
$$
C^{(1-\xi)\beta_0 + \xi \beta_1}(\overline \Omega) = (C^{\beta_0}(\overline \Omega), C^{\beta_1}(\overline \Omega))_{\xi,\infty}
$$
with equivalent norms.

(III) If $\beta \in (0, 2+\theta] \setminus \N$, any bounded and closed subset of $C^\beta(\overline \Omega)$ is closed in $C(\overline \Omega)$.

(IV) There exists an element $R$ if $\mL(C(\partial \Omega), C(\overline \Omega))$ such that $\gamma Rg = g$ $\forall g \in C(\partial \Omega)$ and, for any $\xi$ in $[0, 2+\theta]$,  $R_{|C^\xi(\partial \Omega)}$ belongs to $\mL(C^\xi(\partial \Omega), C^\xi(\overline \Omega))$. 
\end{lemma}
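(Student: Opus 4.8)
The plan is to treat (I)--(II) by reducing to the classical interpolation theory of H\"older spaces on $\R^n$, and to handle (III)--(IV) by soft functional-analytic and geometric arguments. For the reduction underlying (I)--(II), I would fix once and for all a single extension operator $E$ that is bounded from $C^\beta(\overline\Omega)$ into $C^\beta(\R^n)$ \emph{simultaneously} for every $\beta\in[0,2+\theta]$ (for instance by reflection in boundary charts, or Stein's operator; its construction uses the $C^{2+\theta}$ regularity of $\partial\Omega$ from (A1)), together with the restriction $\varrho\colon C^\beta(\R^n)\to C^\beta(\overline\Omega)$, so that $\varrho E=\mathrm{id}$. Since $(\varrho,E)$ is a retraction--coretraction pair bounded on the whole scale, it transfers both the $J_\xi$/$K_\xi$ relations and the identification of interpolation spaces from $\R^n$ to $\overline\Omega$. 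Hence it suffices to prove the corresponding statements for $C^\beta(\R^n)$, where the boundary plays no role.

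On $\R^n$ the facts are classical (see \cite{Lu1}, \cite{Tr2}), but I would indicate the mechanism. The $J_\xi$ membership is the convexity (interpolation) inequality
\[
\|f\|_{C^\gamma(\R^n)}\le C\,\|f\|_{C^{\beta_0}(\R^n)}^{1-\xi}\,\|f\|_{C^{\beta_1}(\R^n)}^{\xi},\qquad \gamma=(1-\xi)\beta_0+\xi\beta_1,
\]
obtained by interpolating the relevant derivatives and H\"older seminorms. The $K_\xi$ membership, i.e.\ $C^\gamma(\R^n)\hookrightarrow (C^{\beta_0},C^{\beta_1})_{\xi,\infty}$, I would obtain by estimating the $K$-functional through a mollification $f=(f-f*\phi_\delta)+f*\phi_\delta$: the first term is controlled in $C^{\beta_0}$ by $C\delta^{\gamma-\beta_0}\|f\|_{C^\gamma}$ and the second in $C^{\beta_1}$ by $C\delta^{\gamma-\beta_1}\|f\|_{C^\gamma}$, so that optimizing over $\delta\sim t^{1/(\beta_1-\beta_0)}$ gives $K(t,f)\le C\,t^{\xi}\|f\|_{C^\gamma}$, since $(\gamma-\beta_0)/(\beta_1-\beta_0)=\xi$. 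Part (II) is then the sharp identification: the interpolation space $(C^{\beta_0},C^{\beta_1})_{\xi,\infty}$ is always the Zygmund space of order $\gamma$, and it coincides with the classical H\"older space $C^\gamma$ precisely because the hypothesis $\gamma\notin\N$ excludes the genuinely Zygmund endpoint; the reverse embedding, completing the equivalence of norms, again comes from the $K$-functional estimate above.

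For (III) the operative content, and the form in which it is used below, is that closed balls of $C^\beta(\overline\Omega)$ are closed (indeed compact) in $C(\overline\Omega)$. Let $\|f_n\|_{C^\beta}\le M$ with $f_n\to f$ uniformly. Since $\beta\in(0,2+\theta]\setminus\N$, the embedding $C^\beta(\overline\Omega)\hookrightarrow C^{[\beta]}(\overline\Omega)$ is compact (Ascoli--Arzel\`a, applied to $f_n$ and its derivatives up to order $[\beta]$), so a subsequence converges in $C^{[\beta]}$; the limit is forced to be $f$, hence all derivatives of $f_n$ up to order $[\beta]$ converge uniformly to those of $f$. Lower semicontinuity of the H\"older seminorm of order $\beta-[\beta]$ under uniform convergence then yields $f\in C^\beta(\overline\Omega)$ with $\|f\|_{C^\beta}\le\liminf_n\|f_n\|_{C^\beta}\le M$, so $f$ lies in the ball.

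Finally, for (IV) I would construct a common extension operator explicitly. Cover $\partial\Omega$ by finitely many sets $U_1,\dots,U_N$ carrying $C^{2+\theta}$ diffeomorphisms $\psi_j$ that flatten $\partial\Omega\cap U_j$ onto a piece of $\{y_n=0\}$ and $\overline\Omega\cap U_j$ onto $\{y_n\ge 0\}$ (possible by (A1)); fix a partition of unity $\{\eta_j\}$ subordinate to $\{U_j\}$ near $\partial\Omega$ and a cutoff localizing in a tubular neighborhood. For $g\in C(\partial\Omega)$, in each chart transport $\eta_j g$ to a function $h_j$ on $\{y_n=0\}$, extend it trivially in the normal variable by $(E_0h_j)(y',y_n)=h_j(y')$ (times a cutoff in $y_n$), pull back by $\psi_j^{-1}$, and sum, calling the result $Rg$. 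One then checks $\gamma Rg=g$ (each piece reproduces $\eta_j g$ on the boundary and $\sum_j\eta_j=1$ there) and $R\in\mL(C(\partial\Omega),C(\overline\Omega))$. The crux is the bound $R\in\mL(C^\xi(\partial\Omega),C^\xi(\overline\Omega))$ for $\xi\in[0,2+\theta]$: the trivial normal extension $E_0$ preserves $C^\xi$ with $\xi$-independent constants because it is constant in $y_n$, while composition with the $C^{2+\theta}$ charts preserves $C^\xi$ for all $\xi\le 2+\theta$. This last point, obtaining the estimates uniformly across the whole scale through the chart changes and the partition of unity, is where the $C^{2+\theta}$ regularity of $\partial\Omega$ is genuinely used and is what caps the admissible range at $2+\theta$; I expect it, together with the $K$-functional estimate in (I), to be the main technical step.
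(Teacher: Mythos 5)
Your proof is correct, and on the one point where the paper gives an actual argument, part (IV), it coincides with the paper's: Guidetti defines $R_0g(x',x_n)=g(x')\chi(x_n)$ on $\R^n_+$ (the trivial extension, constant in the normal variable, times a cutoff $\chi$) and assembles $R$ from $R_0$ by local charts and a partition of unity, which is exactly your construction. For (I)--(III) the paper offers no proof at all, citing \cite{Gu1}, Proposition 1.1, so your self-contained arguments (retraction--coretraction reduction to $\R^n$ via a scale-uniform extension operator, the convexity inequality for $J_\xi$, the mollification estimate of the $K$-functional, and Ascoli plus lower semicontinuity of the H\"older seminorm for (III)) supply what the paper outsources, and they are sound. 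Two remarks. First, in your $K$-functional estimate the bound $\|f-f*\phi_\delta\|_{C^{\beta_0}(\R^n)}\le C\delta^{\gamma-\beta_0}\|f\|_{C^\gamma(\R^n)}$ does not hold for an arbitrary bump when $\gamma-\beta_0>1$ (and here $\gamma$ may range up to $2+\theta$, so the gap can exceed $3$): a plain mollifier only yields first-order accuracy, so you must either take $\phi$ with vanishing moments $\int y^a\phi(y)\,dy=0$ for $1\le |a|\le 3$, or characterize the $K$-functional through iterated differences; this is a standard fix, but as written the step is incomplete. Second, your reformulation of (III) is not merely "the operative content" --- it is the only tenable reading: as literally stated, (III) is false for arbitrary bounded, $C^\beta$-closed sets (take $f_n$ with $\|f_n\|_{C^\beta}=1$ converging uniformly to $0$; since no subsequence can converge in $C^\beta$, the set $\{f_n:n\in\N\}$ is closed and bounded in $C^\beta(\overline\Omega)$ yet its uniform closure contains $0$). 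The ball version you prove, closedness of $C^\beta$-balls in $C(\overline\Omega)$ via lower semicontinuity, is precisely what the paper actually uses later (for instance as the hypothesis on $X_1$ in Lemma \ref{le1.3}), so your choice to prove that statement is the right one.
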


\begin{proof}
See \cite{Gu1}, Proposition 1.1 for (I)-(III). Concerning (IV), we construct an operator with similar properties in $\R^n_+$, setting, for $g \in C(\R^{n-1})$, 
$$
R_0g(x',x_n):= g(x') \chi (x_n), \quad (x',x_n) \in \R^n_+,
$$
with $\chi \in C^\infty([0, \infty))$, $\chi(t) = 1$ if $t \in [0, \delta_1]$, $\chi(t) = 1$ if $t \geq \delta_2$, $0 < \delta_1 < \delta_2$. $R$ can be constructed employing $R_0$, local charts and a partition of unity. 
\end{proof}
 
 We introduce the definition and some properties of the Caputo derivative $\mbD^\alpha_X u$. We shall consider the case of functions $u$ defined in $[0, T]$ with values in the complex 
 Banach space $X$. The definition requires some preliminaries. We start from the following simple operator $B$: 
 \begin{equation}\label{eq1.2}
 \left\{\begin{array}{l}
 B_X: \{v \in C^1([0, T]; X) : v(0) = 0\} \to C([0, T]; X), \\ \\
 B_X v:= D_tv. 
 \end{array}
 \right. 
 \end{equation}
Then $\rho(B_X) = \C$ and $B_X$ is a positive operator in $C([0, T]; X)$  of type $\frac{\pi}{2}$ in the sense of the following definition: 
 
 \begin{definition}\label{de2.1} Let $B$ be a linear operator in  the complex Banach space $Y$. We shall say that $B$ is positive of type $\omega$, with $\omega \in (0, \pi)$, if 
 $$
 \{\lambda \in \C \setminus \{0\} : |Arg(\lambda)| >  \omega\} \cup \{0\} \subseteq \rho (B). 
 $$
 Moreover, $\forall \epsilon \in (0,  \pi - \omega)$, there exists $M(\epsilon)$ positive such that 
 
 $$\|\lambda (\lambda - A)^{-1}\|_{\mL(Y)} \leq M(\epsilon)$$
  in case $\lambda \in \C \setminus \{0\}$,  $|Arg(\lambda)| \geq  \omega + \epsilon$.
\end{definition}
We pass to define the powers of a positive operator. For the definition of positive operator see \cite{Ta1}, Definition 2.3.1, where also the condition that $D(B)$ is dense in $Y$ is requires. In order to describe and prove the properties if the fractional properties of $B_X$, we shall appeal (if possible) to corresponding  results in \cite{Ta1}, concerning fractional powers of positive operators with dense domain. 
If $B$ is a positive operator in $X$ of type $\omega$, and $\alpha \in \R^+$, we set
\begin{equation}
B^{-\alpha}: = -\frac{1}{2\pi i} \int_{\Gamma(\phi,R)} \lambda^{-\alpha} (\lambda - B)^{-1} d\lambda. 
\end{equation}
with $\phi \in (\omega, \pi)$ and $R$ positive, such that $\{\lambda \in \C : |\lambda| \leq R\} \subseteq \rho(B)$. It turns out (applying standard computations techniques of complex integrals) that,  we have, $\forall \alpha \in \R^+$, $\forall f \in C([0, T]; X)$, $\forall t \in [0, T]$: 
\begin{equation}\label{eq1.5}
 B_X^{-\alpha} f(t) = \frac{1}{\Gamma (\alpha)} \int_0^t (t-s)^{\alpha -1} f(s) ds. 
 \end{equation}
 With arguments similar to those employed in \cite{Ta1}, Chapter 2.3, one can show the following
 
 \begin{lemma}\label{le1.2}
 Let $B_X$ be the operator defined in (\ref{eq1.2}) and let $\alpha, \beta \in \R^+$. Then: 
 
 (a) $B_X^{-\alpha} B_X^{-\beta} = B_X^{- (\alpha +\beta)}$. 
 
 (b) (\ref{eq1.5}) is consistent with the usual definition of $B_X^{-\alpha}$ in case $\alpha \in \N$. 
 
 (c) $B_X^{-\alpha}$ is injective. 
 \end{lemma}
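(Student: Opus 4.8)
The plan is to prove the three assertions in the order (a), (b), (c), since the semigroup property will be the workhorse for the other two.

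For (a), I would start from the explicit representation (\ref{eq1.5}) and compute directly. Writing $B_X^{-\alpha} B_X^{-\beta} f(t)$ as an iterated integral and interpreting the vector-valued integrals in the usual way (legitimate because $f$ is continuous and the kernels $(t-s)^{\alpha-1}$, $(s-r)^{\beta-1}$ are locally integrable), Fubini's theorem over the region $0 \leq r \leq s \leq t$ gives
\begin{equation}
B_X^{-\alpha}B_X^{-\beta} f(t) = \frac{1}{\Gamma(\alpha)\Gamma(\beta)} \int_0^t f(r) \left( \int_r^t (t-s)^{\alpha-1}(s-r)^{\beta-1}\, ds \right) dr.
\end{equation}
The substitution $s = r + (t-r)\tau$ turns the inner integral into $(t-r)^{\alpha+\beta-1} \int_0^1 (1-\tau)^{\alpha-1}\tau^{\beta-1}\,d\tau$, and the remaining factor is the Euler Beta integral, equal to $\Gamma(\alpha)\Gamma(\beta)/\Gamma(\alpha+\beta)$. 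Substituting back collapses the constants to $1/\Gamma(\alpha+\beta)$ and yields exactly $B_X^{-(\alpha+\beta)} f(t)$, which is (a). Alternatively one may invoke the corresponding computation for fractional powers of positive operators in \cite{Ta1}, Chapter 2.3.

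For (b), the key observation is that $0 \in \rho(B_X)$, so $B_X$ is boundedly invertible and the usual meaning of $B_X^{-\alpha}$ for $\alpha = n \in \N$ is the $n$-fold composition $(B_X^{-1})^n$. I would first settle the base case $\alpha = 1$: the unique $v \in C^1([0,T];X)$ with $v(0)=0$ and $D_t v = f$ is $v(t) = \int_0^t f(s)\,ds$, which is precisely (\ref{eq1.5}) at $\alpha = 1$ since $\Gamma(1)=1$; hence the operator defined by (\ref{eq1.5}) at $\alpha = 1$ coincides with the genuine inverse $B_X^{-1}$. Applying (a) repeatedly then gives $B_X^{-n} = (B_X^{-1})^n$, while the right-hand side of (\ref{eq1.5}) at $\alpha = n$ is the classical Cauchy formula for the $n$-fold iterated integral, so the two descriptions agree.

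For (c), I would reduce to the integer case via (a). Suppose $f \in C([0,T];X)$ satisfies $B_X^{-\alpha} f = 0$. Choosing an integer $n > \alpha$ and setting $\beta = n - \alpha > 0$, applying $B_X^{-\beta}$ and using (a) gives $B_X^{-n} f = B_X^{-\beta} B_X^{-\alpha} f = 0$. By (b), $B_X^{-n} = (B_X^{-1})^n$, and $B_X^{-1}$ is injective because $B_X$ is invertible; hence $(B_X^{-1})^n$ is injective and $f = 0$, proving that $B_X^{-\alpha}$ is injective for every $\alpha \in \R^+$. The only genuinely delicate points are the interchange of integrals in (a) in the vector-valued setting and the identification of the contour-integral definition of $B_X^{-1}$ with the elementary antiderivative; both are routine given the continuity of $f$ and $0 \in \rho(B_X)$, so I expect no real obstacle beyond this bookkeeping.
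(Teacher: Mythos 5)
Your proof is correct, but it takes a genuinely different route from the paper, which in fact offers no self-contained argument at all: it disposes of the lemma with the single remark that it can be shown ``with arguments similar to those employed in \cite{Ta1}, Chapter 2.3'', i.e.\ by adapting Tanabe's abstract theory of fractional powers of a positive operator, which works at the level of the contour-integral definition $B_X^{-\alpha} = -\frac{1}{2\pi i}\int_{\Gamma(\phi,R)}\lambda^{-\alpha}(\lambda - B_X)^{-1}\,d\lambda$ and the resolvent identity. The word ``similar'' is doing real work there: Tanabe's setting requires $D(B)$ dense in $Y$, while $D(B_X) = \{v \in C^1([0,T];X) : v(0)=0\}$ is not dense in $C([0,T];X)$ (its closure is $\{v : v(0)=0\}$), which is precisely why the paper hedges with ``if possible'' when invoking \cite{Ta1}. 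Your argument instead exploits the concrete representation (\ref{eq1.5}), which the paper establishes before stating the lemma and which is therefore fair game: (a) becomes the classical semigroup property of Riemann--Liouville integrals via Fubini and the Beta integral; (b) reduces to the base case $\alpha = 1$ (uniqueness of the primitive vanishing at $0$) plus iteration of (a) and the Cauchy formula for repeated integrals; (c) reduces to injectivity of $(B_X^{-1})^n$, available since $\rho(B_X) = \C$. This buys you a fully elementary proof that entirely sidesteps the density issue; what it costs is generality, since it works only because $B_X$ is the concrete differentiation operator with the kernel representation (\ref{eq1.5}), whereas the Tanabe-style argument applies to arbitrary positive operators. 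The two points you flag as delicate are indeed routine: the kernels are nonnegative and locally integrable and $f$ is continuous, so scalarizing with functionals in the dual of $X$ legitimizes the interchange of integrals, and for (b) as stated you never need to identify the contour integral with anything, because the lemma only asserts consistency of formula (\ref{eq1.5}) itself with the usual definition at integer exponents.
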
 
 So we can define, for any $\alpha$ in $\R^+$, 
 \begin{equation}
 B_X^\alpha:= (B_X^{-\alpha})^{-1}. 
 \end{equation}
 Of course the domain $D(B_X^\alpha)$ of $B_X^\alpha$ is the range of $B_X^{-\alpha}$.  
 \begin{lemma}\label{le1.2A}
 Let $\alpha, \beta \in \R^+$. Then: 
 
 (a) $D(B_X^{\alpha + \beta}) = \{u \in D(B_X^\alpha) : B_X^\alpha u \in D(B_X^\beta)\}$; moreover, if $u \in D(B_X^{\alpha + \beta})$, $B_X^{\alpha + \beta}u = B_X^\beta(B_X^\alpha u)$; 
 
 (b) if $\alpha \leq \beta$, $D(B_X^\beta) \subseteq D(B_X^\alpha)$; 
 
 (c) if $\alpha \in \N$, 
 $$
 D(B_X^\alpha) = \{u \in C^\alpha([0, T]; X) : u^{(k)}(0) = 0 \quad \forall k \in \N_0, k < \alpha\}. 
 $$
 (d) If $\alpha \in  \R^+$, $\rho(B_X^\alpha) = \C$ and $\forall \lambda \in \C$, $\forall f \in C([0, T]; X)$, $\forall t \in [0, T]$
 \begin{equation}\label{eq2.6A}
[(\lambda - B_X^\alpha)^{-1}f](t) =  - \frac{1}{2\pi i} \int_{\Gamma(\phi,R)} (\lambda - \mu^\alpha)^{-1} (\mu - B)^{-1} d\mu = \frac{1}{2\pi i} 
 \int_0^t ( \int_{\Gamma(\phi,R)} \frac{e^{\mu (t-s)}}{\lambda - \mu^\alpha} d\mu) f(s) ds, 
 \end{equation}
 with $\phi \in (\frac{\pi}{2}, \pi)$, $R^\alpha > |\lambda|$. 
 
 (e) If $\alpha \in (0, 2)$, $B_X^\alpha$ is positive of type $\frac{\alpha \pi}{2}$. 
   \end{lemma}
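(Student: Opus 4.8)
The plan is to treat parts (a)--(c) as purely algebraic consequences of the definition $B_X^\alpha := (B_X^{-\alpha})^{-1}$, the composition law $B_X^{-\alpha}B_X^{-\beta} = B_X^{-(\alpha+\beta)}$ and the injectivity of $B_X^{-\alpha}$ provided by Lemma \ref{le1.2}, reserving the genuine analysis for (d)--(e). For (a), I would note that $u \in D(B_X^{\alpha+\beta})$ means $u = B_X^{-(\alpha+\beta)}f$ for some $f$; factoring $B_X^{-(\alpha+\beta)} = B_X^{-\alpha}B_X^{-\beta}$ shows $u$ lies in the range of $B_X^{-\alpha}$, i.e. in $D(B_X^\alpha)$, with $B_X^\alpha u = B_X^{-\beta}f \in D(B_X^\beta)$ and $B_X^\beta(B_X^\alpha u) = f = B_X^{\alpha+\beta}u$; reading the same computation backwards gives the reverse inclusion. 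Part (b) is then the case $\beta = \alpha + (\beta-\alpha)$ of (a) (the case $\beta=\alpha$ being trivial). For (c) I would induct on $\alpha \in \N$: the base $\alpha=1$ is the definition (\ref{eq1.2}) of $B_X$, and the inductive step applies (a) with exponent $1$, combining the identity $B_X^\alpha u = B_X(B_X^{\alpha-1}u)$ with the induction hypothesis $B_X^{\alpha-1}u = D_t^{\alpha-1}u$ to identify $D(B_X^\alpha)$ with the stated space and to get $B_X^\alpha u = D_t^\alpha u$.

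The heart of the lemma is (d), and here the key structural observation --- the point at which the lack of a dense domain flagged before Definition \ref{de2.1} is circumvented --- is that by (\ref{eq1.5}) the bounded operator $B_X^{-\alpha}$ is convolution on $[0,T]$ against the kernel $g_\alpha(t):=t^{\alpha-1}/\Gamma(\alpha)$, hence a Volterra operator. Its iterates are $B_X^{-n\alpha} = g_{n\alpha}\ast(\cdot)$ with $\|g_{n\alpha}\|_{L^1(0,T)} = T^{n\alpha}/\Gamma(n\alpha+1) \to 0$ superexponentially, so $B_X^{-\alpha}$ is quasi-nilpotent. Consequently, for every $\lambda \in \C$ the operator $I - \lambda B_X^{-\alpha}$ is boundedly invertible through its Neumann series. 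Writing, for $u \in D(B_X^\alpha)$, $(\lambda - B_X^\alpha)u = -(I - \lambda B_X^{-\alpha})B_X^\alpha u$, and using that $B_X^\alpha$ is a bijection of $D(B_X^\alpha)$ onto $C([0,T];X)$ with bounded inverse $B_X^{-\alpha}$, I would conclude that $\lambda - B_X^\alpha$ is invertible for every $\lambda$, so $\rho(B_X^\alpha) = \C$, with
\[
(\lambda - B_X^\alpha)^{-1} = -B_X^{-\alpha}(I - \lambda B_X^{-\alpha})^{-1} = -\sum_{n=0}^\infty \lambda^n B_X^{-\alpha(n+1)}.
\]

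It then remains to identify this series with the two integral expressions in (\ref{eq2.6A}). Expanding $(\lambda - \mu^\alpha)^{-1} = -\sum_{n\ge0}\lambda^n\mu^{-\alpha(n+1)}$, which converges uniformly on $\Gamma(\phi,R)$ precisely because $R^\alpha > |\lambda|$, and integrating term by term via the Hankel-type identity $\frac{1}{2\pi i}\int_{\Gamma(\phi,R)} e^{\mu t}\mu^{-\beta}\,d\mu = t^{\beta-1}/\Gamma(\beta) = g_\beta(t)$, I recover $-\sum_n \lambda^n B_X^{-\alpha(n+1)}$ and hence the first equality; the second follows by inserting the explicit resolvent $(\mu - B_X)^{-1}f(t) = -\int_0^t e^{\mu(t-s)}f(s)\,ds$ of $B_X$ and applying Fubini, the double integral being absolutely convergent since the integrand decays like $|\mu|^{-1-\alpha}$ on the rays.

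Finally, in (e) the inclusion $\{|Arg(\lambda)| > \alpha\pi/2\}\cup\{0\}\subseteq\rho(B_X^\alpha)$ is free from (d), so only the resolvent bound $\|\lambda(\lambda - B_X^\alpha)^{-1}\|_{\mL(C([0,T];X))} \le M(\epsilon)$ on $\{|Arg(\lambda)| \ge \alpha\pi/2 + \epsilon\}$ is at stake. I would start from the contour formula, use $\|(\mu - B_X)^{-1}\| \le M/|\mu|$ (from $B_X$ being positive of type $\frac{\pi}{2}$, valid on $\Gamma(\phi,R)$ once $\phi > \frac{\pi}{2}$), fix $\phi$ slightly above $\frac{\pi}{2}$ so that the rays' argument $\alpha\phi$ stays below $\alpha\pi/2+\epsilon$ with a fixed margin, and exploit that by Cauchy's theorem the integral is unchanged for any $R$ with $R^\alpha>|\lambda|$, so one may take $R^\alpha = 2|\lambda|$. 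The substitution $\mu = |\lambda|^{1/\alpha}\nu$ then cancels $|\lambda|$ and reduces the bound to $\frac{M}{2\pi}\int_{\Gamma'} |e^{i\psi} - \nu^\alpha|^{-1}|\nu|^{-1}\,|d\nu|$ over a fixed contour $\Gamma'$, depending only on $\psi=Arg(\lambda)$. The decisive estimate is the uniform geometric lower bound $|e^{i\psi} - \nu^\alpha| \ge c(\epsilon)(1 + |\nu|^\alpha)$ for $|\psi| \ge \alpha\pi/2 + \epsilon$ and $\nu\in\Gamma'$, coming from the fixed angular separation between $\psi$ and $Arg(\nu^\alpha)=\pm\alpha\phi$; it makes the integral converge and gives a bound independent of $|\lambda|$, with the restriction $\alpha<2$ being exactly what keeps the resulting type $\alpha\pi/2$ below $\pi$. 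I expect the two main obstacles to be precisely these: handling the non-density of $D(B_X)$, which I resolve by leaning on the explicit Volterra/convolution structure rather than the general calculus of \cite{Ta1}, and securing the geometric separation estimate for (e) with a constant $c(\epsilon)$ that stays uniform across the whole range $\alpha\pi/2 + \epsilon \le |Arg(\lambda)| \le \pi$.
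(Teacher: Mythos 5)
Parts (a)--(c) of your proposal are correct, and (d) is also correct: your route via the Volterra structure --- quasi-nilpotency of $B_X^{-\alpha}$ from $\|B_X^{-n\alpha}\|\leq T^{n\alpha}/\Gamma(n\alpha+1)$, the Neumann series $(\lambda-B_X^\alpha)^{-1}=-\sum_{n\geq 0}\lambda^n B_X^{-\alpha(n+1)}$, and its term-by-term match with the geometric expansion of $(\lambda-\mu^\alpha)^{-1}$ on $\Gamma(\phi,R)$ (legitimate precisely because $R^\alpha>|\lambda|$) --- is a self-contained alternative to the paper's argument, which instead shows that the entire function $T(\lambda)$ and the resolvent have the same Taylor coefficients $-k!\,B^{-(k+1)\alpha}$ at $\lambda=0$ and concludes by analytic continuation. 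Both are sound; yours additionally delivers the second equality in (\ref{eq2.6A}) explicitly, which the paper only asserts.

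Part (e), however, has a genuine gap. You estimate the contour integral using $\|(\mu-B_X)^{-1}\|_{\mL(C([0,T];X))}\leq M/|\mu|$ ``on $\Gamma(\phi,R)$ once $\phi>\frac{\pi}{2}$'', with $R^\alpha=2|\lambda|$. That bound holds only on the two rays $|Arg(\mu)|=\phi$; it is false on the circular arc $|\mu|=(2|\lambda|)^{1/\alpha}$, $|Arg(\mu)|\leq\phi$, which cuts across the right half-plane. Although $\rho(B_X)=\C$, the explicit formula $(\mu-B_X)^{-1}f(t)=-\int_0^t e^{\mu(t-s)}f(s)\,ds$ gives $\|(\mu-B_X)^{-1}\|\geq (e^{T\,Re(\mu)}-1)/|\mu|$, so on the arc the integrand is of size $e^{T(2|\lambda|)^{1/\alpha}}/|\lambda|$, exponentially large in $|\lambda|^{1/\alpha}$; positivity of type $\frac{\pi}{2}$ controls the resolvent only for $|Arg(\mu)|\geq\frac{\pi}{2}+\epsilon$, and your scaled integral over the fixed contour $\Gamma'$ does not dominate this. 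Nor can you escape by fixing $R$ independent of $\lambda$: then the rays reach down to $|\mu|\approx R$, where $|\lambda-\mu^\alpha|^{-1}\approx|\lambda|^{-1}$, and the modulus estimate yields $\frac{M}{|\lambda|}\int_R^{|\lambda|^{1/\alpha}}\frac{dr}{r}\sim\frac{\log|\lambda|}{|\lambda|}$, i.e. only $\|\lambda(\lambda-B_X^\alpha)^{-1}\|=O(\log|\lambda|)$; the uniform bound requires cancellation that absolute values discard. The standard repair --- in essence what the paper does by invoking \cite{Ta1}, Proposition 2.3.2 for $\alpha\in(0,1)$, and what its own Lemma \ref{le3.7} performs at the scalar level --- is to use holomorphy of the (entire) resolvent to rotate both rays onto the negative real axis, collecting the residues at the roots of $\mu^\alpha=\lambda$ (these lie in $|Arg(\mu)|>\frac{\pi}{2}$, so each contributes $O(|\lambda|^{-1})$); the collapsed contour produces the denominator $(\lambda-\tau^\alpha e^{i\alpha\pi})(\lambda-\tau^\alpha e^{-i\alpha\pi})$, quadratic in $|\lambda|+\tau^\alpha$, which kills the logarithm via $\int_0^\infty \tau^{\alpha-1}(|\lambda|+\tau^\alpha)^{-2}\,d\tau=(\alpha|\lambda|)^{-1}$. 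Note finally that the paper sidesteps the whole issue for $\alpha\in(1,2)$ by the factorization $\lambda-B_X^\alpha=(\lambda^{1/2}+B_X^{\alpha/2})(\lambda^{1/2}-B_X^{\alpha/2})$, reducing to the case $\alpha/2\in(0,1)$; adopting that reduction would also simplify your uniform-separation analysis.
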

 
 \begin{proof} Concerning (a), (b), one can follow the arguments in \cite{Ta1}, Chapter 2.3). 
 
 (c) is trivial. 
 
 Concerning (d), let $\lambda \in \C$. Let $\Gamma(\lambda) = \Gamma(\phi, R)$, with $\phi$ and $R$ as in the statement. If $f \in C([0, T]; X)$, we set
$$
T(\lambda):= - \frac{1}{2\pi i} \int_{\Gamma(\lambda)} (\lambda - \mu^\alpha)^{-1} (\mu - B)^{-1} d\mu. 
$$
 It is easily seen (observing that  $\Gamma(\lambda)$  can be chosen locally independently of $\lambda$) that $T$
   is entire with values in $\mL(C([0, T]; X))$. By well known facts of analytic continuation, in order to show that $T(\lambda) = (\lambda - B^\alpha)^{-1}$, it is sufficient to show that this holds if $\lambda$ belongs to some ball centred in $0$. We set
$R(\lambda):= (\lambda - B^\alpha)^{-1}$, with $\lambda$ sufficiently close to $0$, in such a way that it belongs to $\rho(B^\alpha)$ (as $0 \in \rho(B_X^\alpha)$). We prove that $T^{(k)}(0) = R^{(k)}(0)$ for every $k \in \N_0$. In fact, we have
$$
R^{(k)}(0) = - k! B^{-(k+1)\alpha}.
$$
On the other hand, 
$$
T^{(k)}(0) =  \frac{ k!}{2\pi i} \int_{\Gamma(0)}  \mu^{-\alpha(k+1)} (\mu - B)^{-1} d\mu = - k! B^{-(k+1)\alpha},
$$
and the conclusion follows. 

(e) We consider first the case $\alpha \in (0, 1)$. Employing formula (\ref{eq2.6A}), we can follow the argument in \cite{Ta1}, Proposition 2.3.2 and get the conclusion in this case. If $\alpha \in (1, 2)$, $\lambda \in \C \setminus \{0\}$ and $|Arg(\lambda)| > \frac{\alpha \pi}{2}$, then $|Arg(\pm \lambda^{1/2})| > \frac{\alpha \pi}{4}$ (on account of $\frac{\alpha \pi}{4} < \frac{\pi}{2}$). We deduce that 
$$
\lambda - B_X^\alpha = (\lambda^{1/2} + B_X^{\frac{\alpha}{2}}) (\lambda^{1/2} - B_X^{\frac{\alpha}{2}}),
$$
implying easily the conclusion. 
\end{proof}

\begin{remark}\label{re2.2}
{\rm  Let $\alpha \in (0, 2)$. Then, as $\rho(B^\alpha) = \C$, it is easily seen that for every $c$ in $\C$ $c + B_X^\alpha$ is positive of type $\frac{\alpha \pi}{2}$. 
}
\end{remark}

Now we examine the domain $D(B^\alpha_X)$ of $B^\alpha_X$. We shall employ the following

\begin{proposition} \label{pr2.1A}
Let $X$ be a complex Banach space, $B$ a linear operator in $X$ such that, for some $\phi \in (-\pi, \pi]$, there exists $R$ positive such that
$$
\{\lambda \in \C : |\lambda| \geq R, Arg(\lambda) = \phi\} \subseteq \rho (B)
$$
and, for some $C$ positive and $\lambda$ in this set, 
$$
\|(\lambda - B)^{-1}\|_{\mL(X)} \leq C|\lambda|^{-1}. 
$$
Then: 

(I) if $j, k \in \N$ and $j < k$, $D(B^j) \in K_{j/k}( X, D(B^k)) \cap J_{j/k}( X, D(B^k))$; 

(II) if $j_0$, $k_0$, $j_1$, $k_1$ are nonnegative integers such that $j_0 < k_0$, $j_1 < k_1$, $\xi_0, \xi_1 \in (0, 1)$, $(1 - \xi_0)j_0 + \xi_0 k_0 = (1 - \xi_1)j_1 + \xi_1 k_1$,   then, for any $p$ in $[1, \infty]$, 
$$
(D(B^{j_0}), D(B^{k_0}))_{\xi_0,p} = (D(B^{j_1}), D(B^{k_1}))_{\xi_1,p}. 
$$

(III) If $\xi \in (0, 1)$, 
$$(X, D(B))_{\xi,\infty} = \{f \in X : \limsup_{t \to \infty} t^\xi \|B(te^{i\phi} -B)^{-1}f\| < \infty\}. $$
\end{proposition}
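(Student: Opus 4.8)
The whole argument will use only the resolvent on the ray itself, so set $\lambda = te^{i\phi}$ for $t \ge R$ and write $J_\lambda := \lambda(\lambda - B)^{-1} = I + B(\lambda - B)^{-1}$; the hypothesis gives $\|J_\lambda\| \le C$ and $\|I - J_\lambda\| = \|B(\lambda-B)^{-1}\| \le C+1$ uniformly in $t$. (One could upgrade the ray bound to a resolvent estimate on a cone around it by a Neumann series, but this will not be needed.) I would prove (III) first, then the $K_{j/k}$ half of (I), then the $J_{j/k}$ half, and finally deduce (II) from (I) by reiteration.

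For (III) I would compare $\|B(\lambda-B)^{-1}x\|$ with $K(s,x) := K(s,x;X,D(B))$ in both directions. The splitting $x = J_\lambda x - B(\lambda-B)^{-1}x$, whose first summand lies in $D(B)$ with $\|J_\lambda x\| \le C\|x\|$ and $\|BJ_\lambda x\| = t\|B(\lambda-B)^{-1}x\|$, gives, on taking $s = 1/t$, $K(1/t,x) \le 2\|B(\lambda-B)^{-1}x\| + Ct^{-1}\|x\|$. Conversely, for any decomposition $x = a + b$ with $b \in D(B)$ the identities $B(\lambda-B)^{-1}a = (J_\lambda - I)a$ and $B(\lambda-B)^{-1}b = (\lambda-B)^{-1}Bb$ yield $\|B(\lambda-B)^{-1}x\| \le C(\|a\| + t^{-1}\|Bb\|)$, hence $\|B(\lambda-B)^{-1}x\| \le C\,K(1/t,x)$. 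Since $t^{\xi-1}\|x\| \to 0$, writing $s = 1/t$ shows that $\limsup_{t\to\infty} t^\xi\|B(te^{i\phi}-B)^{-1}x\| < \infty$ is equivalent to $\sup_{s>0} s^{-\xi}K(s,x) < \infty$, i.e. to $x \in (X,D(B))_{\xi,\infty}$.

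For the inclusion $D(B^j) \hookrightarrow (X,D(B^k))_{j/k,\infty}$ in (I) the plan is to smooth $x \in D(B^j)$ by an operator that is $j$-fold flat and lands in $D(B^k)$. I would fix a polynomial $g(r) = \sum_{i=k-j}^{k} c_i r^i$, of lowest degree $k-j$, satisfying $1 - g(r) = (1-r)^j p(r)$ for some polynomial $p$ (the conditions $g(1)=1,\ g'(1)=\dots=g^{(j-1)}(1)=0$ are $j$ linear constraints on the $j+1$ coefficients, so such a $g$ exists). With $t = s^{-1/k}$, put $b_s := g(J_\lambda)x$ and $a_s := x - b_s$. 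Because each $(\lambda-B)^{-i}$ raises the domain index by $i$ and every term of $g$ has $i \ge k-j$, one gets $b_s \in D(B^k)$; and $(I-J_\lambda)^j x = (-1)^j(\lambda-B)^{-j}B^j x$ together with $1-g=(1-r)^j p$ gives $\|a_s\| \le C t^{-j}\|B^j x\| = C s^{j/k}\|B^j x\|$. A parallel computation, again pulling a factor $B^j x$ to the front via $B^k g(J_\lambda)x = B^{k-j}g(J_\lambda)B^j x$, gives $\|B^k b_s\| \le C t^{k-j}\|B^j x\| = C s^{j/k-1}\|B^j x\|$, whence $K(s,x;X,D(B^k)) \le \|a_s\| + s\|b_s\|_{D(B^k)} \le C s^{j/k}\|x\|_{D(B^j)}$, as required.

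The $J$-class inequality $\|x\|_{D(B^j)} \le C\|x\|_X^{1-j/k}\|x\|_{D(B^k)}^{j/k}$ is the step I expect to be the \emph{main obstacle}, because the intermediate powers $B^i x$ ($0<i<j$) are not directly visible to the ray estimate and careless factorizations require domains such as $D(B^{2j})$ that need not be available. My plan is to prove, for $x \in D(B^k)$ and every $t \ge R$, the two-term bound $\|B^j x\| \le C_1 t^j\|x\| + C_2 t^{j-k}\|B^k x\|$ by splitting $x = J_\lambda^k x + (I-J_\lambda^k)x$, expanding $I - J_\lambda^k$ through the resolvent calculus, and estimating each resulting operator $\lambda^{k-m}B^{j+m}(\lambda-B)^{-k}$ by moving the surplus powers onto $x$ or onto $B^k x$: the terms with $j+m\le k$ produce the $t^j\|x\|$ contribution and those with $j+m>k$ the $t^{j-k}\|B^k x\|$ contribution. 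Optimizing over $t$ at $t^k \sim \|B^k x\|/\|x\|$ then yields $\|x\|^{1-j/k}\|x\|_{D(B^k)}^{j/k}$, the low-frequency regime $\|B^k x\| \lesssim R^k\|x\|$ being handled by taking $t = R$ and using $\|x\|_{D(B^k)} \ge \|x\|$. Finally, (II) follows from (I) and the exact reiteration theorem: by (I), for $N \ge \max(k_0,k_1)$ each $D(B^m) \in J_{m/N}(X,D(B^N)) \cap K_{m/N}(X,D(B^N))$, so both $(D(B^{j_0}),D(B^{k_0}))_{\xi_0,p}$ and $(D(B^{j_1}),D(B^{k_1}))_{\xi_1,p}$ coincide with $(X,D(B^N))_{\sigma/N,p}$, where $\sigma = (1-\xi_0)j_0+\xi_0 k_0 = (1-\xi_1)j_1+\xi_1 k_1$.
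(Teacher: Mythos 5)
Your proposal is correct, but it takes a genuinely different route from the paper, whose proof of this proposition contains no argument at all: it disposes of (I) by citing Triebel, Chapter 1.14.3, of (III) by citing Grisvard's Theorem 3.1, and of (II) by the same reiteration argument you use. What you have done is reconstruct, from the single ray estimate, the classical proofs behind those two citations: your two-sided comparison of $\|B(te^{i\phi}-B)^{-1}x\|$ with $K(1/t,x;X,D(B))$ is exactly Grisvard's characterization, and the two halves of (I) are the standard mean construction for the $K$-class and a Landau--Kolmogorov type moment inequality for the $J$-class. The two points where your sketch is thinnest both go through, and are worth making explicit. First, the existence of $g$: a dimension count alone does not settle an inhomogeneous linear system, but an explicit solution is $g(r)=1-(1-r)^j p(r)$ with $p(r)=\sum_{l=0}^{k-j-1}\binom{j-1+l}{l}r^l$, the degree-$(k-j-1)$ truncation of the power series of $(1-r)^{-j}$; then $(1-r)^jp(r)\equiv 1 \pmod{r^{k-j}}$, so $g$ has terms only in degrees $k-j,\dots,k-1$, as required. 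Second, the $J$-part expansion: the identity you need is the binomial one, $I-J_\lambda^k=\bigl[(\lambda-B)^k-\lambda^k\bigr](\lambda-B)^{-k}=\sum_{m=1}^{k}\binom{k}{m}(-1)^m\lambda^{k-m}B^m(\lambda-B)^{-k}$, which after applying $B^j$ produces precisely the operators $\lambda^{k-m}B^{j+m}(\lambda-B)^{-k}$ you name; this choice is essential, since the more obvious factorization $I-J_\lambda^k=(I-J_\lambda)\sum_{m=0}^{k-1}J_\lambda^m$ makes its $m=0$ term reproduce $\|B^jx\|$ itself and is circular. With the binomial identity, splitting at $j+m\le k$ versus $j+m>k$ and commuting powers of $B$ through the resolvent gives $\|B^jx\|\le C(t^j\|x\|+t^{j-k}\|B^kx\|)$ for $t\ge R$, and your optimization (with the low-frequency case $t=R$) finishes it. What your route buys over the citations: it verifies transparently that only the resolvent estimate on the single ray is used and that no density of $D(B)$ is needed, which matters for the paper's intended applications, namely $B_X^\alpha$ and the non-densely defined Dirichlet realization $A$ in $C(\overline\Omega)$; what it costs is length, which is presumably why the paper outsourced (I) and (III) to the literature.
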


\begin{proof}
For (I) see \cite{Tr2}, Chapter 1.14.3. (II) follows from (I) and the reiteration property of the real method. (III) is proved in \cite{Gr1}, Theorem 3.1. 
\end{proof}
 
 \begin{lemma} \label{le1.3}
 Let $X, X_1$ be complex Banach spaces, such that $X_1 \hookrightarrow X$ and closed, bounded subsets of $X_1$ are also closed in $X$. Let $\alpha \in \R^+$,  $u \in D(B^\alpha_X)$ and suppose that $B_X^\alpha u \in B([0, T]; X_1)$. Then: 
 
 (I) if $\alpha \not \in \N$, then $u \in C^\alpha([0, T]; X_1)$. 
 
 (II) If $\alpha \in \N$, then $u \in C^{\alpha-1}([0, T]; X_1)$ and $D_t^{\alpha -1} u \in Lip([0, T]; X_1)$. 
 
 (III) If $k \in \N_0$ and $k < \alpha$, $D_t^k u(0) = 0$. 
  \end{lemma}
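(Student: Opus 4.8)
The plan is to reduce everything to regularity properties of the Riemann--Liouville integral. Writing $f:=B_X^\alpha u$, by definition $f\in C([0,T];X)$ and $u=B_X^{-\alpha}f$, so by (\ref{eq1.5}) we have $u(t)=\frac{1}{\Gamma(\alpha)}\int_0^t(t-s)^{\alpha-1}f(s)\,ds$; the hypothesis adds that $M:=\sup_{t}\|f(t)\|_{X_1}<\infty$. All three conclusions will follow from estimates on fractional integrals $\frac{1}{\Gamma(\gamma)}\int_0^t(t-s)^{\gamma-1}f(s)\,ds$ of a function that is continuous only into $X$ but bounded into $X_1$.

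The central step, and the only place the structural hypothesis on $(X,X_1)$ enters, is the following claim: if $w\in L^1(0,t)$ with $w\ge 0$, then the $X$-valued Bochner integral $\int_0^t w(s)f(s)\,ds$ in fact lies in $X_1$, with $\|\int_0^t w(s)f(s)\,ds\|_{X_1}\le M\int_0^t w(s)\,ds$. To see this, note that the closed ball $\overline{B}_{X_1}(0,M)$ is convex and, being closed and bounded in $X_1$, is closed in $X$ by hypothesis. Truncating $w$ by $w_n:=\min(w,n)$ to absorb the integrable singularity of the kernel at $s=t$, and using that $f$ is continuous into $X$, the integral $\int_0^t w_n(s)f(s)\,ds$ is an $X$-limit of Riemann sums $\sum_i\big(\int_{s_i}^{s_{i+1}}w_n\big)f(\sigma_i)$, each of which is a convex combination of the $f(\sigma_i)\in\overline{B}_{X_1}(0,M)$ scaled by $\int_0^t w_n$; hence each sum, and so its $X$-limit, lies in the convex, $X$-closed ball $\overline{B}_{X_1}(0,M\int_0^t w_n)$. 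Letting $n\to\infty$ via dominated convergence in $X$ gives the claim.

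Granting this, I would first record the differentiation rule $D_t^j B_X^{-\alpha}f=B_X^{-(\alpha-j)}f$ for integers $0\le j\le[\alpha]$ (and $0\le j\le\alpha-1$ if $\alpha\in\N$), obtained by iterating the composition law $B_X^{-\gamma}=B_X^{-1}B_X^{-(\gamma-1)}$ of Lemma \ref{le1.2}(a) together with the fundamental theorem of calculus for the continuous $X$-valued integrand $B_X^{-(\gamma-1)}f$. Evaluating at $t=0$, where a fractional integral of positive order vanishes, yields $D_t^k u(0)=B_X^{-(\alpha-k)}f(0)=0$ for every $k\in\N_0$ with $k<\alpha$, which is (III). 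For (I), set $m:=[\alpha]$ and $\beta:=\alpha-m\in(0,1)$, so that $D_t^m u=B_X^{-\beta}f$. Splitting $B_X^{-\beta}f(t_2)-B_X^{-\beta}f(t_1)$ (for $0\le t_1<t_2$) into $\frac1{\Gamma(\beta)}\int_{t_1}^{t_2}(t_2-s)^{\beta-1}f(s)\,ds$ and $\frac1{\Gamma(\beta)}\int_0^{t_1}[(t_1-s)^{\beta-1}-(t_2-s)^{\beta-1}]f(s)\,ds$, each carrying a nonnegative weight, the step-two claim bounds both $X_1$-norms by a constant times $(t_2-t_1)^\beta$; hence $D_t^m u\in C^\beta([0,T];X_1)$. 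Integrating up from $D_t^m u$ through $D_t^{j-1}u(t)=\int_0^t D_t^j u(s)\,ds$, with the initial values vanishing by (III) and the now continuous $X_1$-valued integrands, gives $u\in C^m([0,T];X_1)$, and therefore $u\in C^\alpha([0,T];X_1)$.

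The same scheme proves (II): when $\alpha=m\in\N$ the last available derivative is $D_t^{m-1}u=B_X^{-1}f=\int_0^t f(s)\,ds$, which the step-two claim (weight $\equiv1$ on $[t_1,t_2]$) shows to be Lipschitz into $X_1$, whence, integrating up, $u\in C^{m-1}([0,T];X_1)$ with $D_t^{m-1}u\in Lip([0,T];X_1)$. The main obstacle is precisely the step-two claim, namely transferring a bound from $X_1$ on the integrand to $X_1$ on the integral despite $f$ being continuous only into $X$; this forces the convex-hull and $X$-closedness argument and the careful handling of the integrable kernel singularity, whereas the remaining computations are the standard Riemann--Liouville calculus carried out with $X_1$-norms.
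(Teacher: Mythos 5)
Your proposal is correct and follows essentially the same route as the paper: your ``central claim'' about weighted integrals $\int_0^t w(s)f(s)\,ds$ landing in the scaled closed ball of $X_1$ is exactly the paper's step-function approximation argument (finite sums lying in a bounded closed, hence $X$-closed, subset of $X_1$) in slightly abstracted form, and your increment split of $B_X^{-\beta}f$ reproduces the paper's $I_1+I_2$ decomposition with the same kernel estimates. The only cosmetic difference is in handling the integer part: the paper applies the fractional case to $f=B^{[\alpha]}u$ and then uses the Cauchy formula $u(t)=\frac{1}{([\alpha]-1)!}\int_0^t(t-s)^{[\alpha]-1}f(s)\,ds$, whereas you differentiate down to $D_t^{[\alpha]}u=B_X^{-\beta}(B_X^\alpha u)$ and integrate back up using (III) --- the same reduction performed in the opposite direction.
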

  
  \begin{proof} (I) Suppose first that $\alpha \in (0, 1)$. Let $f := B_X^\alpha u$. Then $f \in C([0, T]; X) \cap B([0, T]; X_1)$ and, if $t \in [0, T]$, 
$$
u(t) = \frac{1}{\Gamma(\alpha)} \int_0^t (t-\tau)^{\alpha-1} f(\tau) d\tau. 
$$
Then, clearly, $u(0) = 0$. Moreover, if $0 \leq s < t \leq T$, 
$$
u(t) - u(s) =  \frac{1}{\Gamma(\alpha)} \int_s^t (t-\tau)^{\alpha-1} f(\tau) d\tau + \frac{1}{\Gamma(\alpha)} \int_0^s [ (t-\tau)^{\alpha-1}  - (s - \tau)^{\alpha - 1}]f(\tau) d\tau = I_1 + I_2. 
$$
We set, for $n \in \N$, $f_n : [0, T] \to X_1$, $f_n(0) = 0$,  $f_n(t) = f(\frac{kT}{n})$ if $t \in (\frac{(k-1)T}{n}, \frac{kT}{n}]$, $1 \leq k \leq n$. Then $\int_s^t (t-\tau)^{\alpha-1} f_n(\tau) d\tau \in X_1$ and
$$
\|\int_s^t (t-\tau)^{\alpha-1} f_n(\tau) d\tau\|_{X_1} \leq \frac{(t-s)^\alpha}{\alpha} \|f\|_{B([0, T]; X_1)}. 
$$
As 
$$
{\displaystyle \lim_{n \to \infty}} \|\int_s^t (t-\tau)^{\alpha-1} [f_n(\tau)- f(\tau)] d\tau\|_{X} = 0, 
$$
we deduce that $I_1 \in X_1$ and 
$$\|I_1\|_{X_1} \leq \frac{(t-s)^\alpha}{\Gamma (\alpha + 1)} \|f\|_{B([0, T]; X_1)}. $$
Analogously, one can show that $I_2 \in X_1$ and 
$$
\begin{array}{c}
\|I_2\|_{X_1} \leq  \frac{1}{\Gamma(\alpha)} \int_0^s  [(s-\tau)^{\alpha-1}  - (t - \tau)^{\alpha - 1}]  d\tau \|f\|_{B([0, T]; X_1)} \\ \\
= \frac{1}{\Gamma(\alpha +1)} [(t-s)^\alpha - (t^\alpha - s^\alpha)]  \|f\|_{B([0, T]; X_1)} \leq \frac{1}{\Gamma(\alpha +1)} (t-s)^\alpha  \|f\|_{B([0, T]; X_1)}. 
\end{array}
$$
We assume now that $\alpha > 1$. We set $f:= B^{[\alpha]} u$. Then $B^{\alpha - [\alpha]}f \in C([0, T]; X) \cap B([0, T]; X_1)$, so that $f \in C^{\alpha - [\alpha]}([0, T]; X_1)$. So the claim follows from the identity
$$
u(t) = \frac{1}{([\alpha] - 1)!} \int_0^t (t-s)^{[\alpha] - 1} f(s) ds, \quad t \in [0, T]. 
$$
Similarly, one can show (II). (III) follows from Lemma \ref{le1.2A} (b)-(c). 
  \end{proof}
  
  \begin{proposition}\label{pr2.1}
  (I) Let $\alpha \in \R^+ \setminus \N$. Then
  $$D(B_X^\alpha) \in K_{\alpha - [\alpha]}(D(B_X^{[\alpha]}), D(B_X^{[\alpha]+1})) \cap J_{\alpha - [\alpha]}(D(B_X^{[\alpha]}), D(B_X^{[\alpha]+1})). $$
  
  (II) If $0 \leq \alpha_0 < \alpha_1$, $\xi \in (0, 1)$ and $(1 - \xi)\alpha_0 + \xi \alpha_1 \not \in \N$, then 
  $$
  (D(B_X^{\alpha_0}), D(B_X^{\alpha_1})_{\xi, \infty} = \{f \in C^{(1 - \xi)\alpha_0 + \xi \alpha_1}([0, T]; X) : f^{(k)}(0) = 0 \quad \forall k \in \N_0, k < (1 - \xi)\alpha_0 + \xi \alpha_1\},
  $$
  with equivalent norms.
  \end{proposition}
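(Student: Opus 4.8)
The plan is to reduce both parts to a single fractional power in $(0,1)$ of $B_X$ and to one concrete Hölder interpolation identity, exploiting throughout that $B_X^{-[\alpha]}$ is an isomorphism shifting all the couples involved by $[\alpha]$ steps.

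For (I), set $\sigma := \alpha - [\alpha] \in (0,1)$. By Lemma \ref{le1.2A}(a) and the injectivity in Lemma \ref{le1.2}(c), the operator $B_X^{-[\alpha]}$ (explicitly the iterated integral (\ref{eq1.5})) is an isomorphism of $C([0,T];X)$ onto $D(B_X^{[\alpha]})$, both with the graph norm, which carries $D(B_X)$ onto $D(B_X^{[\alpha]+1})$ and $D(B_X^\sigma)$ onto $D(B_X^\alpha)$. Since real interpolation is functorial under isomorphisms of compatible couples, it sends the couple $(C([0,T];X), D(B_X))$ to $(D(B_X^{[\alpha]}), D(B_X^{[\alpha]+1}))$, so (I) is equivalent to $D(B_X^\sigma) \in J_\sigma(C([0,T];X), D(B_X)) \cap K_\sigma(C([0,T];X), D(B_X))$. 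This I would prove by the standard estimates for fractional powers of the positive operator $B_X$ (arguing as in \cite{Ta1}, Chapter 2.3, the non-density of $D(B_X)$ being irrelevant for norm estimates): the $J$-property from the moment inequality $\|B_X^\sigma u\| \le C\|u\|^{1-\sigma}\|B_X u\|^\sigma$, and the $K$-property from the characterization of $(C([0,T];X), D(B_X))_{\sigma,\infty}$ in Proposition \ref{pr2.1A}(III) along the ray $\phi=\pi$ (admissible since $B_X$ is positive of type $\frac{\pi}{2}$ and $0 \in \rho(B_X)$), combined with the resolvent bound $\|B_X^{1-\sigma}(t+B_X)^{-1}\|_{\mL} \le C t^{-\sigma}$ obtained from the contour representation used in Lemma \ref{le1.2A}(d),(e); indeed $B_X(t+B_X)^{-1}u = B_X^{1-\sigma}(t+B_X)^{-1}B_X^\sigma u$ for $u \in D(B_X^\sigma)$, whence $t^\sigma\|B_X(t+B_X)^{-1}u\| \le C\|B_X^\sigma u\|$.

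For (II), put $\beta := (1-\xi)\alpha_0 + \xi\alpha_1 \notin \N$ and fix an integer $M > \alpha_1$. First I would upgrade (I), via Proposition \ref{pr2.1A}(I) for the integer powers and the reiteration theorem, to the transitive statement $D(B_X^\gamma) \in J_{\gamma/M} \cap K_{\gamma/M}(C([0,T];X), D(B_X^M))$ for every $\gamma \in [0,M]$: for non-integer $\gamma$ one composes (I) for the couple $(D(B_X^{[\gamma]}), D(B_X^{[\gamma]+1}))$ with the classes $\frac{[\gamma]}{M}, \frac{[\gamma]+1}{M}$ of these two spaces relative to $(C([0,T];X), D(B_X^M))$, the exponent collapsing to $\gamma/M$. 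Applying the reiteration theorem to the couple $(C([0,T];X), D(B_X^M))$, with $D(B_X^{\alpha_0}), D(B_X^{\alpha_1})$ in the classes $\alpha_0/M \ne \alpha_1/M$, gives
$$(D(B_X^{\alpha_0}), D(B_X^{\alpha_1}))_{\xi,\infty} = (C([0,T];X), D(B_X^M))_{\beta/M,\infty} = (D(B_X^{[\beta]}), D(B_X^{[\beta]+1}))_{\beta-[\beta],\infty},$$
the last equality being the same reiteration for the consecutive integers $[\beta], [\beta]+1$. Using the isomorphism $B_X^{-[\beta]}$ as in (I), the right-hand side equals $B_X^{-[\beta]}\big[(C([0,T];X), D(B_X))_{\beta-[\beta],\infty}\big]$.

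It then remains to establish the single concrete identity, for $\sigma \in (0,1)$,
$$(C([0,T];X), D(B_X))_{\sigma,\infty} = \{f \in C^\sigma([0,T];X) : f(0) = 0\},$$
and to note that $B_X^{-[\beta]}$, the $[\beta]$-fold integral from $0$, carries this space bijectively onto $\{u \in C^\beta([0,T];X) : u^{(k)}(0)=0,\ k \in \N_0,\ k < \beta\}$, which is immediate since iterated integration from $0$ raises the Hölder exponent by $[\beta]$ and forces the vanishing of $u, u', \dots, u^{([\beta])}$ at $0$. For the identity, the inclusion $\subseteq$ follows from monotonicity of interpolation (giving $C^\sigma$ through the classical $(C,C^1)_{\sigma,\infty}=C^\sigma$) together with a $K$-functional argument: in a near-optimal decomposition $f = f_{0,t} + f_{1,t}$ one has $f(0) = f_{0,t}(0)$, as $f_{1,t}(0)=0$, and $|f(0)| \le \|f_{0,t}\|_C \le Ct^\sigma \to 0$. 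For $\supseteq$ I would use, for $f \in C^\sigma$ with $f(0)=0$, the explicit regularization $g_t(s) := \frac1t \int_{\max(s-t,0)}^s f(\tau)\,d\tau \in D(B_X)$, and the estimates $\|f-g_t\|_C \le C[f]_\sigma t^\sigma$ and $\|g_t\|_{D(B_X)} \le C(\|f\|_C + [f]_\sigma t^{\sigma-1})$, giving $K(t,f) \le Ct^\sigma\|f\|_{C^\sigma}$ for $t \le 1$; here the hypothesis $f(0)=0$ is exactly what controls the terms near $s=0$ and guarantees $g_t(0)=0$. I expect this last identity — the Hölder interpolation together with the survival of the boundary condition at $t=0$ — to be the main technical point, the remainder being bookkeeping with reiteration and with the isomorphism $B_X^{-[\alpha]}$.
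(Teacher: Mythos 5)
Your proof is correct, and its skeleton coincides with the paper's: reduction to the fractional exponent $\sigma = \alpha - [\alpha] \in (0,1)$ via the isomorphism $B_X^{-[\alpha]}$ and the interpolation property, the $J$-class via the moment inequality $\|B_X^\sigma u\| \leq C\|u\|^{1-\sigma}\|B_Xu\|^\sigma$ (the paper routes this through Proposition 1.2.13 of \cite{Lu1} plus the argument of \cite{Ta1}, Proposition 2.3.3, exactly as you do), and reiteration bookkeeping for (II) terminating in the one concrete identity $(C([0,T];X), D(B_X))_{\sigma,\infty} = \{f \in C^\sigma([0,T];X) : f(0)=0\}$, i.e. (\ref{eq2.6B}). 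You genuinely diverge in two sub-steps. For the $K$-class in (I), the paper avoids the resolvent entirely: it invokes Lemma \ref{le1.3} (the direct computation showing $B_X^{-\sigma}$ maps $C([0,T];X)$ into $C^\sigma([0,T];X)$ with vanishing at $0$) together with (\ref{eq2.6B}); your alternative through Grisvard's characterization, Proposition \ref{pr2.1A}(III), along the ray $\phi = \pi$, combined with $\|B_X^{1-\sigma}(t+B_X)^{-1}\|_{\mL} \leq Ct^{-\sigma}$ and the commutation $B_X(t+B_X)^{-1}u = B_X^{1-\sigma}(t+B_X)^{-1}B_X^\sigma u$, is the classical resolvent route and is sound, since $B_X$ is positive of type $\frac{\pi}{2}$ with $0 \in \rho(B_X)$. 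Second, where the paper simply cites \cite{DPGr1} (Appendix) for (\ref{eq2.6B}) — stated there for $C_0([0,T];X)$ — and patches the difference by noting that elements of the interpolation space lie in the closure of $D(B_X)$ and hence vanish at $0$, you prove the identity from scratch: your $K$-functional decomposition argument for $f(0) = f_{0,t}(0) \to 0$ is precisely the analogue of that patching remark, and your mollifier $g_t(s) = \frac{1}{t}\int_{\max(s-t,0)}^s f(\tau)\,d\tau$, with the checks that $f(0)=0$ controls both the matching of $g_t'$ at $s=t$ and the error near $s=0$, reconstructs the content of the cited appendix. The net effect is that your proof is more self-contained (no appeal to \cite{DPGr1} or to Lemma \ref{le1.3}) at the cost of redoing two standard computations, while the paper's is shorter because it leans on Lemma \ref{le1.3}, which it needs elsewhere anyway. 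One minor point in your favor in (II): your choice of the reference couple $(C([0,T];X), D(B_X^M))$ with lower endpoint $0$ handles the case $\alpha_0 = 0$ cleanly, whereas the paper's stipulation $m < \alpha_0$ with $m \in \N_0$ fails literally when $\alpha_0 = 0$ and needs the same trivial endpoint fix you build in.
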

    \begin{proof} (I) Using the fact that, for any $k \in \N_0$, $\alpha \in [0, \infty)$, $B_X^k$ is an isomorphism between $D(B_X^{\alpha + k})$ and $D(B_X^{\alpha})$,  it is clear that the claim in the general case follows from the particular case $0 < \alpha < 1$, . First of all, it is well known that, in this case,
  \begin{equation}\label{eq2.6B}
  (C([0, T]; X), D(B_X))_{\alpha, \infty} = \{f \in C^\alpha([0, T]; X) : f(0) = 0\},
  \end{equation}
  with equivalent norms. This is proved in \cite{DPGr1}, Appendix, if we replace $C([0, T]; X)$ with $C_0([0, T]; X):= \{f \in C([0, T]; X): f(0)\}$. If $f \in (C([0, T]; X), D(B_X))_{\alpha, \infty}$, $f$ belongs to the closure of $D(B)$ in $C([0, T]; X)$, so that necessarily $f(0) = 0$. 
  
  This implies
  (applying the definition of $(C([0, T]; X), D(B_X))_{\alpha, \infty}$ by the $K$ method (see \cite{Lu1}, Chapter 1.2.1) that 
 $$(C([0, T]; X), D(B_X))_{\alpha, \infty} = (C_0([0, T]; X), D(B))_{\alpha, \infty}.$$
 So the fact that $D(B_X^\alpha) \in K_{\alpha - [\alpha]}(C([0, T]; X), D(B_X))$ follows from Lemma \ref{le1.3} and (\ref{eq2.6B}). By Proposition 1.2.13 in \cite{Lu1}, in order to prove that $D(B_X^\alpha) \in J_{\alpha - [\alpha]}(C([0, T]; X)$$D(B_X))$ (again in case $0 < \alpha < 1$), it suffices to show that there exists $C$ positive such that, if $f \in D(B_X)$, 
 $$
 \|B^\alpha_X f\|_{C([0, T]; X)} \leq C \|f\|_{C([0, T]; X)}^{1-\alpha}  \|B_X f\|_{C([0, T]; X)}^{\alpha} 
 $$
 This can be shown following the argument in \cite{Ta1}, Proposition 2.3.3. 
 
 (II) Let $m, n \in \N_0$, with $m < \alpha_0 < \alpha_1 < n$. Then, by (I), Proposition \ref{pr2.1A} (I) and the reiteration property, if $j \in \{0,1\}$, $D(B_X^{\alpha_j}) \in J_{\frac{\alpha_j - m}{n-m}} (D(B_X^m), D(B_X^n)) \cap K_{\frac{\alpha_j - m}{n-m}} (D(B_X^m), D(B_X^n))$. So, again by the reiteration theorem, 
 $$
 \begin{array}{c}
 (D(B_X^{\alpha_0}), D(B_X^{\alpha_1}))_{\xi, \infty} = (D(B_X^m), D(B_X^n))_{\frac{(1-\xi ) \alpha_0 + \xi \alpha_1 - m}{n-m}, \infty} = (D(B_X^k), D(B_X^{k+1}))_{(1-\xi ) \alpha_0 + \xi \alpha_1 - k, \infty}
 \end{array}
 $$
 if $k = [(1-\xi ) \alpha_0 + \xi \alpha_1]$. By the interpolation property, $B_X^k$ is an isomorphism of Banach spaces between $(D(B_X^k), D(B_X^{k+1}))_{(1-\xi ) \alpha_0 + \xi \alpha_1 - k, \infty}$ and 
 $(X, D(B_X))_{(1-\xi ) \alpha_0 + \xi \alpha_1 - k, \infty}$. So
 $$
 (D(B_X^k), D(B_X^{k+1}))_{(1-\xi ) \alpha_0 + \xi \alpha_1 - k, \infty} = \{f \in D(B_X^k): f^{(k)} \in C^{(1-\xi ) \alpha_0 + \xi \alpha_1 - k}([0, T]; X), f^{(k)}(0)\},
 $$
 which implies (II). 
   \end{proof}
Now we prove that functions which are representable in a certain way belong to $D(B_X^\alpha)$: 

\begin{proposition} \label{pr7A} Let $\phi_0 \in (\frac{\pi}{2}, \pi)$, $R \in \R^+$, $\alpha \in [0, \infty)$. Let $F : \{\lambda \in \C : |\lambda| > R, |Arg(\lambda)| < \phi_0\} \to X$ be such that: 

(a) $F$ is holomorphic; 

(b) there exists $M \in \R^+$ such that $\|F(\lambda)\| \leq M |\lambda|^{-1-\alpha}$, if $\lambda \in \C$,  $|\lambda| > R$, $|Arg(\lambda)| < \phi_0$; 

(c) for some $F_0 \in X$, ${\displaystyle \lim_{|\lambda| \to \infty}} \lambda^{1+\alpha} F(\lambda) = F_0$. Let $R' > R$, $\frac{\pi}{2} < \phi_1 < \phi_0$. 

We set, for $t \in (0, T]$, 
 $$
 u(t) := \left\{\begin{array}{lll}
  \frac{1}{2\pi i} \int_{\Gamma(R',\phi_1)} e^{\lambda t} F(\lambda) d\lambda & {\rm if } & 0 < t \leq T, \\ 
  0 & {\rm if } & t = 0, \alpha > 0, \\ 
  F_0 & {\rm if } & t = 0, \alpha = 0. 
  \end{array}
 \right.
 $$
 Then $u \in D(B_X^\alpha)$, for $t \in (0, T]$, 
 $$
 B_X^\alpha u(t) = \frac{1}{2\pi i} \int_\gamma e^{\lambda t} \lambda^{\alpha} F(\lambda) d\lambda
 $$
 and 
 $$
 B_X^\alpha u(0) = F_0. 
 $$
\end{proposition}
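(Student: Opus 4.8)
The plan is to produce explicitly the function that should be $B_X^\alpha u$, to check that it is continuous on $[0,T]$, and then to verify that $u$ is its Riemann--Liouville fractional integral of order $\alpha$. Since $D(B_X^\alpha)$ is by definition the range of $B_X^{-\alpha}$ and $B_X^\alpha=(B_X^{-\alpha})^{-1}$ (see Lemma \ref{le1.2} and the lines following it), these two facts yield at once $u\in D(B_X^\alpha)$ together with the two asserted formulas. Writing $\gamma:=\Gamma(R',\phi_1)$ for the contour in the definition of $u$, I set
\[
v(t):=\frac{1}{2\pi i}\int_\gamma e^{\lambda t}\lambda^\alpha F(\lambda)\,d\lambda\quad(0<t\le T),\qquad v(0):=F_0 .
\]
By (b) one has $\|\lambda^\alpha F(\lambda)\|\le M|\lambda|^{-1}$, and on the two rays of $\gamma$ we have $\mathrm{Re}\,\lambda=-|\lambda|\,|\cos\phi_1|$ with $|\cos\phi_1|>0$; hence for each fixed $t>0$ the factor $e^{\lambda t}$ decays exponentially, the integral converges absolutely, and dominated convergence gives $v\in C((0,T];X)$. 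The same decay lets me differentiate under the integral sign, so that $u\in C((0,T];X)$ as well.

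To obtain $u=B_X^{-\alpha}v$ I would use the vector-valued Laplace transform. Both $u$ and $v$ are bounded on $[0,\infty)$ (the rays of $\gamma$ force exponential decay of $e^{\lambda t}$), so their transforms exist for $\mathrm{Re}\,p>0$. Deforming $\gamma$ and collecting the residue at $\lambda=p$ yields the standard identity $\frac{1}{2\pi i}\int_\gamma\frac{\Phi(\lambda)}{p-\lambda}\,d\lambda=\Phi(p)$ for every $\Phi$ holomorphic and decaying to the right of $\gamma$; applied to $\Phi=F$ and to $\Phi=\lambda^\alpha F$ it gives $\hat u(p)=F(p)$ and $\hat v(p)=p^\alpha F(p)$. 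Since the transform of the fractional integral (\ref{eq1.5}) multiplies by $p^{-\alpha}$, one gets $\widehat{B_X^{-\alpha}v}(p)=p^{-\alpha}\hat v(p)=F(p)=\hat u(p)$, whence $u=B_X^{-\alpha}v$ by uniqueness of the Laplace transform. Consequently $u\in D(B_X^\alpha)$, $B_X^\alpha u=v$ on $[0,T]$ (so the displayed formula for $B_X^\alpha u(t)$ holds), and $B_X^\alpha u(0)=v(0)$; moreover $u=B_X^{-\alpha}v$ is automatically continuous up to $t=0$ with the value prescribed in the statement ($0$ if $\alpha>0$, $F_0$ if $\alpha=0$). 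A purely ``real-variable'' alternative is to insert the power series of $v$ into (\ref{eq1.5}) and interchange sum and integral, but the Laplace route is cleaner.

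The step I expect to be the main obstacle is the continuity of $v$ at $t=0$ together with the identification $v(0)=F_0$; this is exactly where hypothesis (c) is used. The integral defining $v(0)$ is only conditionally convergent, the integrand being merely $O(|\lambda|^{-1})$, and the limit $t\to0^+$ does not commute with $\int_\gamma$: indeed $\frac{1}{2\pi i}\int_\gamma\lambda^{-1}\,d\lambda=\phi_1/\pi$, whereas $\frac{1}{2\pi i}\int_\gamma e^{\lambda t}\lambda^{-1}\,d\lambda=1$ for every $t>0$, the latter being the classical Hankel evaluation (collapse $\gamma$ onto the simple pole at $0$). Writing $\lambda^\alpha F(\lambda)=F_0\lambda^{-1}+G_1(\lambda)$, hypothesis (c) says $\|\lambda G_1(\lambda)\|\to0$, and the two displayed facts reduce everything to showing $\frac{1}{2\pi i}\int_\gamma e^{\lambda t}G_1(\lambda)\,d\lambda\to0$ as $t\to0^+$. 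Here crude absolute-value bounds fail — after the rescaling $\mu=\lambda t$ they only produce an unwanted $\log(1/t)$ — so one must exploit the holomorphy of $G_1$ (e.g.\ an integration by parts in $\lambda$, or a contour deformation, to convert $\|\lambda G_1\|\to0$ into genuine decay of the integral) or, equivalently, invoke the initial-value (Abelian) theorem $v(0^+)=\lim_{p\to\infty}p\,\hat v(p)=\lim_{p\to\infty}p^{1+\alpha}F(p)=F_0$ once continuity of $v$ at $0$ has been secured. This asymptotic analysis near $t=0$ is the technical heart of the proposition.
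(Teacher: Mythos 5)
Your overall architecture coincides with the paper's: you introduce the candidate $v$ (the paper's $f$), reduce everything to $v\in C([0,T];X)$ with $v(0)=F_0$, and then identify $u=B_X^{-\alpha}v$ through uniqueness of the vector-valued Laplace transform, via $\mathcal{L}u=F$, $\mathcal{L}v=\lambda^{\alpha}F$ and $\mathcal{L}\bigl(t^{\alpha-1}/\Gamma(\alpha)\bigr)(\lambda)=\lambda^{-\alpha}$ — this is word for word the paper's second half (with one slip on your side: $u$ and $v$ are \emph{not} bounded on $[0,\infty)$, since on the arc of the contour $\mathrm{Re}\,\lambda$ may equal $R'$; the paper correctly uses the bound $\|u(t)\|+\|f(t)\|\le Ce^{R't}$, which still makes the transforms exist for $\mathrm{Re}\,p>R'$). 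The genuine gap is exactly the step you yourself single out as the technical heart: after the splitting $\lambda^{\alpha}F(\lambda)=F_0\lambda^{-1}+G_1(\lambda)$ you reduce to $\int_{\gamma}e^{\lambda t}G_1(\lambda)\,d\lambda\to 0$ and then only list candidate techniques (integration by parts, contour deformation, or an Abelian initial-value theorem). The Abelian route is circular — as you half-concede, it presupposes precisely the continuity of $v$ at $0$ that is to be proved — so as written the proposal does not establish $v(0^+)=F_0$, and with it neither $v\in C([0,T];X)$ nor $B_X^{\alpha}u(0)=F_0$.

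The gap is closed by the contour deformation you mention, and by nothing more: no integration by parts is needed and no $\log(1/t)$ loss occurs. The paper first treats $\alpha=0$ and then applies that case to $\Phi(\lambda):=\lambda^{\alpha}F(\lambda)$, which satisfies (a)--(c) with $\alpha=0$ and the same $F_0$. For $0<t\le\min\{1,T\}$ one deforms the arc of $\Gamma(R',\phi_1)$ to radius $R'/t$ (legitimate, since $R'/t\ge R'>R$ keeps the contour in the domain of $F$ and $e^{\lambda t}$ decays along the rays) and substitutes $\lambda=\mu/t$, obtaining, with your Hankel identity $\frac{1}{2\pi i}\int_{\Gamma(R',\phi_1)}\frac{e^{\mu}}{\mu}\,d\mu=1$,
\[
v(t)=\frac{1}{2\pi i}\int_{\Gamma(R',\phi_1)}\frac{e^{\mu}}{\mu}\,t^{-1}\mu\,\Phi(t^{-1}\mu)\,d\mu
=F_0+\frac{1}{2\pi i}\int_{\Gamma(R',\phi_1)}\frac{e^{\mu}}{\mu}\bigl[t^{-1}\mu\,\Phi(t^{-1}\mu)-F_0\bigr]\,d\mu .
\]
By (b) the bracket is uniformly bounded by $M+\|F_0\|$, by (c) it tends to $0$ pointwise on the contour as $t\to 0^+$, and $(M+\|F_0\|)\,|e^{\mu}|/|\mu|$ is an integrable dominating function because $\cos\phi_1<0$ on the rays; dominated convergence then gives $v(t)\to F_0$. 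Note that this is your $G_1$-splitting read on the \emph{rescaled} contour: the uniform bound $\|\lambda G_1(\lambda)\|\le C$ together with the pointwise decay is all that dominated convergence requires, and the $\log(1/t)$ you feared appears only if one replaces dominated convergence by absolute-value estimates. Had you executed this one computation, your proof would have matched the paper's in full.
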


\begin{proof}
We begin by considering the case $\alpha = 0$. It is clear that $u \in C((0, T]; X)$. We show that
$$
{\displaystyle \lim_{t \to 0}} u(t) = F_0. 
$$
By standard properties of holomorphic functions, we have, for $t \in (0, \min\{1, T\}]$, 
$$
\begin{array}{c}
u(t) = \frac{1}{2\pi i} \int_{\Gamma(R' t^{-1}, \phi_1)} e^{\lambda t} F(\lambda) d\lambda 
= \frac{1}{2\pi i} \int_{\Gamma(R', \phi_1)} \frac{e^{\lambda}}{\lambda} t^{-1} \lambda F(t^{-1} \lambda) d\lambda \\ \\
= F_0 + \frac{1}{2\pi i} \int_{\Gamma(R', \phi_1)}  \frac{e^{\lambda}}{\lambda} [ t^{-1} \lambda F(t^{-1} \lambda) - F_0] d\lambda
\end{array}
$$
and the second summand vanishes as $t \to 0$, by the dominated convergence theorem. 

Suppose now that $\alpha > 0$. We set, for $t \in (0, T]$, 
$$
f(t) = \frac{1}{2\pi i} \int_\gamma e^{\lambda t} \lambda^{\alpha} F(\lambda) d\lambda .
$$
Then, employing what we have seen in case $\alpha = 0$, we deduce $f \in C([0, T]; X)$ and $f(0) = F_0$. We check that
$$
B_X^{-\alpha}f= u. 
$$
 In fact, if we put
 $$
 v(t) = B_X^{-\alpha}f(t) = \frac{1}{\Gamma (\alpha)} \int_0^t (t-s)^{\alpha - 1} f(s) ds, 
 $$
 and we consider the extensions of $u$, $f$, $v$ to $[0, \infty)$, we have that 
 $$
 \|u(t)\| + \|f(t)\| \leq C e^{R't}, \quad \forall t \in [0, \infty), 
 $$
 for some $C$ positive. By the inversion formula of the Laplace transform, we have, for $Re(\lambda) > R'$, 
 $$
 \mL u(\lambda) = F(\lambda), \quad  \mL f(\lambda) = \lambda^\alpha F(\lambda).
 $$
 As 
 $$
\mL(\frac{t^{\alpha -1}}{\Gamma (\alpha)} ) (\lambda) = \lambda^{-\alpha}, \quad Re(\lambda) > 0, 
 $$
 we deduce that
 $$
 \mL v(\lambda) = F(\lambda), 
 $$
 so that $u = v$. 
  \end{proof}
 
 \begin{remark}\label{re2.2A}
 {\rm Suppose that $F$ fulfills the assumptions of Proposition \ref{pr7A} with $\alpha = 0$ and the (possible) exception of (c). Then $u \in B([0, T]; X)$. 
 }
 \end{remark}
  
Now we are able to define the Caputo derivative of order $\alpha$:

\begin{definition}\label{de2.2}
Let $\alpha \in \R^+$, $u \in C^{[\alpha]}([0, T]; X)$. Then the Caputo derivative of order $\alpha$ $\mbD^\alpha_X u$ exists if $u - \sum_{k < \alpha} \frac{t^k}{k!} u^{(k)}(0)$ belongs to $D(B^\alpha_X)$ and 
$$
\mbD^\alpha_X u := B_X^\alpha (u - \sum_{k < \alpha} \frac{t^k}{k!} u^{(k)}(0)). 
$$
\end{definition}

\begin{remark}
{\rm It is easy to see that, if $\alpha \in \N$, $\mbD^\alpha_X u$ exists if and only if $f \in C^\alpha([0, T]; X)$ and $\mbD^\alpha_X u = u^{(\alpha)}$. In any case, by Lemma \ref{le1.3}, $u \in C^\alpha([0, T]; X)$. 
 }
\end{remark}
 
 Now we introduce the following unbounded operator $A$: let $A(x, D_x)$ the partial differential operator introduced in $(A2)$. We set
\begin{equation}\label{eq2.6}
\left\{\begin{array}{l}
D(A) = \{u \in  \cap_{1 \leq p < \infty} (W^{2,p}(\Omega) \cap W^{1,p}_0(\Omega) : A(\cdot, D_x) u \in C(\overline \Omega)\}, \\ \\
Au = A(\cdot, D_x) u. 
\end{array}
\right. 
\end{equation}

\begin{proposition}\label{pr2.4}
Suppose that (A1)-(A2) hold. Then :

(I) there exist $\omega$ in $(0, (1 - \frac{\alpha}{2})\pi)$, $R$ and $C$ positive such that
$$
\{\lambda \in \C : |\lambda| \geq R, |Arg(\lambda)| \geq \omega\} \subseteq \rho(-A)
$$
and, if $\lambda$ is in this set, 
$$
\|(\lambda + A)^{-1}\|_{\mL(C(\overline \Omega)} \leq C |\lambda|^{-1}. 
$$

(II) As a consequence, there exists $\delta \geq 0$,  such that $\delta - A$ is a positive operator in $C(\overline \Omega)$ of type $\omega$. 

(III) If $\xi \in (0, 1) \setminus \{\frac{1}{2}\}$, $(C(\overline \Omega), D(A))_{\xi,\infty} = C^{2\xi}_0(\overline \Omega)$, with equivalent norms. 

(IV) If $\lambda \in \rho(-A)$ and $f\in C^\theta(\overline \Omega)$, $(\lambda + A)^{-1} f \in C^{2+\theta}(\overline \Omega)$. 

(V) There exists $C$ positive, such that, if $|\lambda| \geq R$ and $|Arg(\lambda)| \geq \omega$, 
\begin{equation}\label{eq2.8}
\|(\lambda + A)^{-1} f\|_{C^{2+\theta}(\overline \Omega)} + |\lambda| \|(\lambda + A)^{-1} f\|_{C^{\theta}(\overline \Omega)} \leq C( \|f\|_{C^{\theta}(\overline \Omega)} + |\lambda|^{\frac{\theta}{2}} \|\gamma f\|_{C(\partial \Omega)}). 
\end{equation}

\end{proposition}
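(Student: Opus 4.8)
The plan is to regard (I) and (V) as the analytic core and to deduce (II)--(IV) from them together with standard interpolation and elliptic-regularity facts; all of these statements are essentially classical for a second order elliptic operator realized with Dirichlet conditions in spaces of continuous functions, so I would either cite them (in the spirit of \cite{Lu1} and \cite{Gu1}) or reconstruct them by the parametrix method sketched below. The one point at which assumption (A2) enters is the choice of sector angle: by continuity of $x \mapsto \sum_{|\rho|=2} a_\rho(x)\xi^\rho$ and compactness of $\overline\Omega \times \{|\xi|=1\}$, the quantity $\omega_0 := \max\{|Arg(\sum_{|\rho|=2} a_\rho(x)\xi^\rho)| : x \in \overline\Omega, |\xi|=1\}$ is attained and is strictly smaller than $(1-\frac{\alpha}{2})\pi$, so I fix $\omega \in (\omega_0, (1-\frac{\alpha}{2})\pi)$. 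Since the Fourier principal symbol of $A$ differs from $\sum_{|\rho|=2} a_\rho(x)\xi^\rho$ only by the factor $i^{|\rho|}$, this choice yields the uniform ellipticity-with-parameter bound $|\lambda + \sigma(x_0,\xi)| \geq c(|\lambda| + |\xi|^2)$ on the sector $\{|Arg(\lambda)| \geq \omega\}$, where $\sigma(x_0,\cdot)$ is the principal symbol frozen at a point $x_0 \in \overline\Omega$.

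For (I) I would use the classical sup-norm parametrix for the Dirichlet realization together with the maximum principle, the outcome being the sectoriality of $A$ in $C(\overline\Omega)$ (equivalently, that $-A$ generates an analytic semigroup there). Localizing by a partition of unity, the interior model $(\lambda + A_0(x_0,D))u = f$ on $\R^n$ is solved by the Fourier multiplier with symbol $(\lambda + \sigma(x_0,\xi))^{-1}$, for which the bound above gives $\|u\|_{C(\R^n)} \leq C|\lambda|^{-1}\|f\|_{C(\R^n)}$; the boundary model on $\R^n_+$ with $u(x',0)=0$ produces a corrective boundary layer of width $|\lambda|^{-1/2}$ (schematically a multiple of $e^{-c|\lambda|^{1/2} x_n}$ carrying the trace of $f$) whose sup norm is $\lesssim |\lambda|^{-1}\|f\|_{C}$, hence harmless; no boundary term appears in the sup norm. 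Patching the pieces and absorbing the first order commutators of $A$ with the cut-offs for $|\lambda| \geq R$ gives the estimate $\|(\lambda+A)^{-1}\|_{\mathcal L(C(\overline\Omega))} \leq C|\lambda|^{-1}$, while the inclusion of the sector in $\rho(-A)$ follows from this a priori bound by the usual continuity argument.

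For (V) I would run the same localization, now measuring everything in Hölder norms by Schauder-with-parameter estimates. The interior model gives the homogeneous estimate $\|u\|_{C^{2+\theta}} + |\lambda|\,\|u\|_{C^\theta} \leq C\|f\|_{C^\theta}$. The half-space boundary model is the main obstacle, and it is exactly here that the extra term $|\lambda|^{\frac{\theta}{2}}\|\gamma f\|_{C(\partial\Omega)}$ in (\ref{eq2.8}) is born: the same boundary layer $\sim e^{-c|\lambda|^{1/2} x_n}\gamma f$, now measured in $C^\theta$, has seminorm of order $|\lambda|^{\frac{\theta}{2}-1}\|\gamma f\|$ (its sup norm times $(|\lambda|^{1/2})^\theta$, reflecting the layer width $|\lambda|^{-1/2}$), so that $|\lambda|$ times it contributes precisely $|\lambda|^{\frac{\theta}{2}}\|\gamma f\|$. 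This is the quantitative form of the non-sectoriality of $A$ in $C^\theta(\overline\Omega)$ recalled in the introduction, and explains why the clean homogeneous estimate fails when $\gamma f \neq 0$. Globalizing, with the coefficient-freezing error made small on small patches using $a_\rho \in C^\theta(\overline\Omega)$ and the boundary flattened via the $C^{2+\theta}$ charts of (A1), yields (\ref{eq2.8}), i.e. (V).

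Finally, (II) is obtained by a shift: (I) controls the sector for $|\lambda| \geq R$, and choosing $\delta \geq 0$ large enough gives $\{\lambda \neq 0 : |Arg(\lambda)| > \omega\} \cup \{0\} \subseteq \rho(\delta - A)$ with $\|\lambda(\lambda - (\delta - A))^{-1}\|$ uniformly bounded on each subsector $\{|Arg(\lambda)| \geq \omega + \epsilon\}$, which is positivity of type $\omega$ in the sense of Definition \ref{de2.1}. For (III) I would combine the resolvent characterization of real interpolation spaces in Proposition \ref{pr2.1A}(III) with the half-space computation to identify $(C(\overline\Omega), D(A))_{\xi,\infty}$ with $C^{2\xi}_0(\overline\Omega)$ whenever $2\xi \notin \N$, i.e. $\xi \neq \frac{1}{2}$ (the borderline being excluded because $C^1$ is not an interpolation space of this scale). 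For (IV), given $\lambda \in \rho(-A)$ and $f \in C^\theta(\overline\Omega)$, the function $u = (\lambda + A)^{-1}f$ lies in $\cap_{1 \leq p < \infty} W^{2,p}(\Omega) \cap W^{1,p}_0(\Omega)$, hence in $C^{1,\beta}(\overline\Omega)$ for every $\beta < 1$ by Sobolev embedding; then $A(\cdot,D_x)u = f - \lambda u \in C^\theta(\overline\Omega)$, and Schauder estimates up to the boundary for the homogeneous Dirichlet problem upgrade this to $u \in C^{2+\theta}(\overline\Omega)$.
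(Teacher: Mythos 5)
Your overall architecture is sound and your heuristics are the right ones, but your route differs from the paper's at two points, and one of your steps leans on a tool that is not available here. The paper does not build a sup-norm parametrix for (I): it invokes Agmon's trick, observing that under (A2) the operator $-e^{i\psi}D_t^2 + A(x,D_x)$ is properly elliptic on $\R \times \overline\Omega$ for $|\psi| \in [\omega,\pi]$ (Dirichlet conditions automatically satisfying the complementing condition), quotes the parameter-dependent $L^p$ resolvent estimate of \cite{Ta1}, Theorem 3.8.1, for the realizations $A_p$ on $W^{2,p}\cap W^{1,p}_0$, and then transfers the estimate to $C(\overline\Omega)$ by the method of \cite{St1}. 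Your direct localization with the Fourier-multiplier interior model and the $e^{-c|\lambda|^{1/2}x_n}$ boundary layer is a legitimate alternative and is in fact the mechanism behind the cited results; in particular your accounting of where the $|\lambda|^{\theta/2}\|\gamma f\|_{C(\partial\Omega)}$ term in (\ref{eq2.8}) comes from matches exactly the argument of \cite{Gu1}, Theorem 1.6, which the paper quotes for (V), and your use of Proposition \ref{pr2.1A}(III) for (III) is in the same spirit as the boundary-condition interpolation result (\cite{Gu2}, Theorem 3.6) that the paper cites. What the paper's reduction buys is that all hard analysis is outsourced to parameter-elliptic $L^p$ theory; what your route buys is a self-contained picture that makes the non-sectoriality in $C^\theta(\overline\Omega)$ transparent.

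Two concrete soft spots. First, you invoke the maximum principle in (I): the coefficients $a_\rho$ are complex-valued under (A2), so no maximum principle is available; fortunately your parametrix scheme does not actually need it (invertibility on the sector follows from the a priori bound plus the method of continuity, as you say elsewhere), so you should simply delete that appeal. Second, and more seriously, in (IV) you write that Schauder estimates ``upgrade'' the $W^{2,p}$ solution to $C^{2+\theta}(\overline\Omega)$. A priori estimates alone never upgrade regularity: they apply to functions already known to lie in $C^{2+\theta}$, and the textbook Schauder \emph{regularity} theorems (Gilbarg--Trudinger style) are proved for real coefficients via barriers and the maximum principle, hence are not directly quotable for complex $C^\theta$ coefficients. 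This is precisely the gap the paper closes in its proof of (IV): it establishes, by Agmon's method, a uniform parameter a priori estimate for the family $A_\epsilon(x,D_x) = (1-\epsilon)\Delta_x + \epsilon A(x,D_x)$, $\epsilon \in [0,1]$, and then runs the method of continuity from the Laplacian (where $C^{2+\theta}$ solvability is classical) to $\epsilon = 1$, finally reaching arbitrary $\lambda \in \rho(-A)$ through the resolvent identity. Your sketch can be repaired either by replicating this continuation argument or by citing Agmon--Douglis--Nirenberg regularity theory for properly elliptic complex operators (proper ellipticity does hold here: if both roots of $\tau \mapsto \sum_{|\rho|=2}a_\rho(x)(\xi' + \tau\nu)^\rho$ lay in one half-plane, the argument of the symbol would wind by $2\pi$ along real $\tau$ and violate the sector condition in (A2)), but as written the step is unsupported.
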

\begin{proof} (I)  By compactness, there exist $\omega'$ in $(0, (1 - \frac{\alpha}{2})\pi)$, such that $|Arg(A(x, \xi)| \leq \omega'$ for any $(x,\xi)$ in $\overline \Omega \times (\R^n \setminus \{0\})$. Let $\omega \in (\omega', (1 - \frac{\alpha}{2})\pi)$.  So we have that, if  $|\psi| \in [\omega , \pi]$,
$$
e^{i \psi} \tau^2 - \sum_{|\alpha| = 2} a_\alpha(x) \xi^\alpha \neq 0 
$$
implying that $-e^{i\psi} D_t^2 + A(x, D_x)$ is properly elliptic in $\R \times \overline \Omega$ (see \cite{Ta1}, Chapter 3.7). If we have a properly elliptic operator with Dirichlet boundary conditions, the complementing condition is satisfied. So let $p \in (1, \infty)$. We set
$$
\left\{ \begin{array}{l}
D(A_p):= W^{2,p}(\Omega) \cap W_0^{1,p}(\Omega), \\ \\
A_p u = A(\cdot, D_x)u. 
\end{array}
\right. 
$$
$A_p$ is thought as an unbounded operator in $L^p(\Omega)$.  By Theorem 3.8.1 in \cite{Ta1}, there exists $R$ positive, such that, if $|\lambda| \geq R$ and  $|Arg(\lambda)| \geq \omega$, then $\lambda \in \rho(-A_p)$ and, for some $C_p$ positive, 
$$
\|(\lambda + A_p)^{-1}\|_{\mL(L^p(\Omega)} \leq C_p |\lambda|^{-1}. 
$$
Employing the method introduced in \cite{St1}, we deduce the claim. 

(II) follows immediately from (I). 

(III) can be proved with the argument in \cite{Gu2}, Theorem 3.6. 

(IV) Let $\epsilon \in [0, 1]$. We set 
$$
A_\epsilon(x,D_x):= (1 -\epsilon) \Delta_x + \epsilon A(x,D_x). 
$$
We observe that 
$$
|Arg((1 -\epsilon) |\xi|^2  + \epsilon A(x,\xi)| < (1 - \frac{\alpha}{2})\pi \quad \forall \epsilon \in [0, 1], \forall \xi \in \R^n \setminus \{0\}. 
$$
Employing a well known method due to Agmon (see, for example, \cite{Ta1}, Chapter 3.7) we can show the following a priori estimate: there exist $R', C$ positive, such that, if $\lambda| \geq R'$, $|Arg(\lambda)| \geq \omega$, $\epsilon \in [0, 1]$, $u \in C^{2+\theta}_0(\overline \Omega)$, 
$$
\|u\|_{C^{2+\theta}(\overline \Omega)} + |\lambda| \|u\|_{C^{\theta}(\overline \Omega)} \leq C (\|(\lambda + A_\epsilon)u\|_{C^{\theta}(\overline \Omega)} + |\lambda|^{\frac{\theta}{2}} \|(\lambda + A_\epsilon)u\|_{C(\overline \Omega)}). 
$$
If $\epsilon = 0$, it is well known that $(\lambda + A_0)^{-1} f \in C^{2+\theta}(\overline \Omega)$ in case $f \in C^{\theta}(\overline \Omega)$. So claim (IV) follows from the continuation method if $|\lambda| \geq R'$, $|Arg(\lambda)| \geq \omega$. The general case can be obtained fixing $\lambda_0$ in this set and recalling that, for any $\lambda$ in $\rho(A)$, 
$$
(\lambda + A)^{-1} = (\lambda_0 + A)^{-1} + (\lambda_0 - \lambda) (\lambda_0 + A)^{-1} (\lambda + A)^{-1} . 
$$

(V) can be obtained with the argument in \cite{Gu1}, Theorem 1.6. 

\end{proof}

\begin{remark}\label{re2.4}
{\rm By Proposition \ref{pr2.4} (I), if $\omega \in (0, (1 - \frac{\alpha}{2})\pi)$ is such that $|Arg(A(x, \xi))| < \omega$ for any $(x, \xi)$ in $\overline \Omega \times ( \R^n \setminus \{0\})$, for some $R$ positive
$$
\{\lambda \in \C : |\lambda| \geq R, |Arg(\lambda)| \leq \pi - \omega\} \subseteq \rho(A). 
$$
Moreover, if $\lambda$ is in this set
$$
\|(\lambda - A)^{-1}\|_{\mL(C(\overline \Omega)} \leq C|\lambda|^{-1}. 
$$
We observe that 
$$
\pi - \omega > \frac{\alpha \pi}{2}. 
$$

}
\end{remark}

Now we consider the abstract equation
\begin{equation}\label{eq2.7}
B^\alpha_X u(t) - Au(t) = f(t), \quad t \in [0, T], 
\end{equation}
with the following general conditions: 

\medskip

{\it (E) $X$ is a complex Banach space, $\alpha \in (0, 2)$, $A : D(A) (\subseteq X) \to X$ is an operator in $X$, such that, for some $\delta \geq 0$, $-A$ is positive of  type $\eta$ less than $(1 - \frac{\alpha}{2})\pi$. 
}

\medskip

We introduce the following 

\begin{definition}\label{de2.3}
Suppose that (E) holds. A strict solution of (\ref{eq2.7}) is an element $u$ of $D(B_X^\alpha) \cap C([0, T];$ $D(A))$ such that $B^\alpha_X u(t) - Au(t) = f(t)$ $\forall t \in [0, T]$. 
\end{definition}

It is convenient to introduce in the space $Y:= C([0, T]; X)$, the operator $\mA$, defined as follows: 
\begin{equation}
\left\{\begin{array}{l}
D(\mA) = C([0, T]; D(A)), \\ \\
(\mA u)(t) = Au(t), \quad t \in [0, T].  
\end{array}. 
\right. 
\end{equation}
and write (\ref{eq2.7}) in the form
\begin{equation}
(B_X^\alpha - \delta)u + (\delta - \mA)u = f. 
\end{equation}
We observe that $B_X^\alpha - \delta$ is positive of type $\frac{\alpha \pi}{2}$, $\delta - \mA$ is positive of type $\eta$, $\frac{\alpha \pi}{2} + \eta < \pi$. We observe also that $\rho(\mA) = \rho(A)$ and, $\forall \lambda$ in $\rho(A)$, $f \in C([0, T]; C(\overline \Omega))$, $t$ in $[0, T]$, 
$$
[(\lambda - \mA)^{-1}f](t) = (\lambda - A)^{-1} f(t). 
$$
Finally, if $\lambda \in \rho(B_X^\alpha)$, $\mu \in \rho(\mA)$, then
$$
(\lambda - B_X^\alpha)^{-1} (\mu - \mA)^{-1} = (\mu - \mA)^{-1} (\lambda - B_X^\alpha)^{-1}. 
$$
So we are in position to apply a slight generalization of the theory developed in \cite{DPGr1}, concerning sums of operators with commuting resolvents. This slight generalization can be found in \cite{DCGuLo1}, Theorem 2.2. Applying this theorem, we can deduce the following

\begin{proposition}\label{pr2.5}
Suppose that (E) holds. Then: 

(I) for any $f$ in $C([0, T]; X)$ (\ref{eq2.7}) has, at most, one strict solution $u$. 

(II) Such strict solution can be represented (if existing) in the form 
\begin{equation}\label{eq2.11}
u = Sf = \frac{1}{2\pi i} \int_{\Gamma (\pi - \eta',R)} (B^\alpha_X - \lambda)^{-1} (\lambda - \mA)^{-1} f d\lambda, 
\end{equation}
with  $\eta < \eta' < (1 - \frac{\alpha \pi}{2}) \pi$ and $R \in \R^+$ such that $\{\lambda \in \C : |\lambda| \geq R, |Arg(\lambda)| \leq \pi - \eta' \} \subseteq \rho (A)$. 
\end{proposition}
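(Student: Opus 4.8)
The plan is to read (\ref{eq2.7}) as an equation of the form $(L+M)u=f$ for two positive operators with commuting resolvents, and to invoke the abstract sum-of-operators theorem of \cite{DCGuLo1}, Theorem 2.2 (the variant of the Da Prato--Grisvard theory announced just before the statement). Concretely, I would set $L:=B_X^\alpha-\delta$ and $M:=\delta-\mA$, so that (\ref{eq2.7}) rewrites as $Lu+Mu=f$, and a strict solution is exactly an element $u\in D(B_X^\alpha)\cap C([0,T];D(A))=D(L)\cap D(M)$ with $(L+M)u=f$.

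The first step is to check the three hypotheses of the abstract theorem. (a) By Lemma \ref{le1.2A}(e) together with Remark \ref{re2.2}, $L=B_X^\alpha-\delta$ is positive of type $\frac{\alpha\pi}{2}$. (b) By assumption (E), $\delta-A$ is positive of type $\eta$ in $X$; since $(\mu-\mA)^{-1}$ acts pointwise in $t$ via $[(\mu-\mA)^{-1}f](t)=(\mu-A)^{-1}f(t)$, this positivity lifts to $C([0,T];X)$, so $M=\delta-\mA$ is positive of the same type $\eta$ (as already observed before the statement). (c) The crucial angle condition $\frac{\alpha\pi}{2}+\eta<\pi$ holds because $\eta<(1-\frac{\alpha}{2})\pi=\pi-\frac{\alpha\pi}{2}$. (d) Finally, the resolvents of $L$ and $M$ commute, again because $B_X^\alpha$ acts in the time variable while $\mA$ acts fibrewise in space; this too was recorded before the statement. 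These four facts are precisely the input required by \cite{DCGuLo1}, Theorem 2.2.

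With the hypotheses in force I would introduce the operator $S$ of (\ref{eq2.11}). Its integral converges in $\mL(C([0,T];X))$: on $\Gamma(\pi-\eta',R)$ one has $\pi-\eta'>\frac{\alpha\pi}{2}$ and $\pi-\eta'<\pi-\eta$, so by positivity of $B_X^\alpha$ and by Remark \ref{re2.4} both resolvents are $O(|\lambda|^{-1})$ there and the integrand is $O(|\lambda|^{-2})$. Formula (\ref{eq2.11}) is just the Da Prato--Grisvard representation $\frac{1}{2\pi i}\int(L+z)^{-1}(M-z)^{-1}\,dz$ after the change of variable $\lambda=\delta-z$, which cancels the shifts by $\delta$ and turns the contour around $\sigma(M)$ into the contour $\Gamma(\pi-\eta',R)$ encircling $\sigma(\mA)$; I would record this bookkeeping, but it carries no real content. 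The substance of the cited theorem is that $S$ is a left inverse of $L+M$ on the natural domain, i.e.
\[
S(B_X^\alpha-\mA)u=u,\qquad u\in D(B_X^\alpha)\cap C([0,T];D(A)).
\]
The algebraic engine behind this identity is the splitting $(L+M)=(L+z)+(M-z)$, which, together with commutativity, reduces the integrand of $S(L+M)u$ to $[(L+z)^{-1}+(M-z)^{-1}]u$; the genuinely delicate point, and the one I would defer entirely to \cite{DCGuLo1}, is that the two resulting contour integrals are only conditionally convergent and must be regularized before the residue computation returns $u$.

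Granting the left-inverse identity, both assertions follow at once. For (I), if $u_1,u_2$ are strict solutions then $w:=u_1-u_2\in D(B_X^\alpha)\cap C([0,T];D(A))$ satisfies $(B_X^\alpha-\mA)w=0$, whence $w=S(B_X^\alpha-\mA)w=S\,0=0$. For (II), if $u$ is a strict solution then $(B_X^\alpha-\mA)u=f$, so $u=S(B_X^\alpha-\mA)u=Sf$, which is (\ref{eq2.11}). The main obstacle thus lies not in the reduction but in the abstract commuting-sum theorem itself; once its hypotheses are verified, (I) and (II) are formal consequences.
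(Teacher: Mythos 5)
Your proposal is correct and follows exactly the paper's route: the paper likewise rewrites (\ref{eq2.7}) as $(B_X^\alpha-\delta)u+(\delta-\mA)u=f$, records the same four facts (positivity of type $\frac{\alpha\pi}{2}$ via Remark \ref{re2.2}, positivity of $\delta-\mA$ of type $\eta$, the angle condition $\frac{\alpha\pi}{2}+\eta<\pi$, and commuting resolvents), and then deduces both (I) and (II) by citing Theorem 2.2 of \cite{DCGuLo1}, deferring the delicate contour-regularization to that reference just as you do. Your additional remarks on the $O(|\lambda|^{-2})$ decay of the integrand and the reduction of uniqueness to the left-inverse identity are consistent with, and slightly more explicit than, the paper's treatment.
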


\begin{corollary}\label{co2.1}
Suppose that A1)-(A4) hold. Consider equation (\ref{eq2.7}),  in case $X = C(\overline \Omega)$ and $A$ is the operator defined in (\ref{eq2.6}). Then: 

(I)  for any $f$ in $C([0, T]; C(\overline \Omega))$ (\ref{eq2.7}) has, at most, one strict solution $u$.

(I) Such strict solution can be represented (if existing) in the form 

\begin{equation}\label{eq2.12}
u(t) = \frac{1}{2\pi i} \int_0^t ( \int_{\Gamma(\phi,r)} e^{\lambda (t-s)} (\lambda^\alpha - A)^{-1} d\lambda) f(s) ds, 
\end{equation}
with $\frac{\pi}{2} < \phi < \frac{\pi - \omega}{\alpha}$, $\omega$ as in Remark \ref{re2.4}, $r$ positive, such that $\{\lambda \in \C : |\lambda| \geq r^\alpha, |Arg(\lambda)| \leq \pi - \omega\} \subseteq \rho(A)$. 
\end{corollary}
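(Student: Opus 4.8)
The plan is to obtain this corollary as a direct specialization of Proposition \ref{pr2.5}, whose hypotheses I must first verify in the concrete setting $X = C(\overline\Omega)$ with $A$ given by (\ref{eq2.6}). The main task is therefore twofold: check that condition (E) holds here, and then transform the abstract representation formula (\ref{eq2.11}) into the explicit double-integral formula (\ref{eq2.12}).

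For the first step I would invoke Proposition \ref{pr2.4}(II), which gives $\delta \geq 0$ such that $\delta - A$ is positive of type $\omega$ in $C(\overline\Omega)$, with $\omega \in (0, (1-\frac{\alpha}{2})\pi)$ chosen as in Remark \ref{re2.4}. Since $\omega < (1-\frac{\alpha}{2})\pi$, condition (E) is satisfied with $\eta = \omega$, and so part (I) of the corollary (uniqueness of the strict solution) follows immediately from Proposition \ref{pr2.5}(I); I would simply state this. It remains only to rewrite the representation (\ref{eq2.11}) of Proposition \ref{pr2.5}(II) in the form (\ref{eq2.12}).

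For the second step, I start from $u = Sf$ with the contour $\Gamma(\pi - \eta', R)$ as in (\ref{eq2.11}), choosing $\eta' = \omega$. The key is to resolve the resolvent of $B_X^\alpha$. By Lemma \ref{le1.2A}(d), for $f \in C([0,T];X)$ and $t \in [0,T]$,
$$
[(B_X^\alpha - \lambda)^{-1} v](t) = -\frac{1}{2\pi i}\int_0^t \Bigl(\int_{\Gamma(\phi,r)} \frac{e^{\mu(t-s)}}{\lambda - \mu^\alpha}\, d\mu\Bigr) v(s)\, ds,
$$
so substituting $v = (\lambda - \mA)^{-1} f$, using $[(\lambda - \mA)^{-1} f](s) = (\lambda - A)^{-1} f(s)$, and interchanging the order of the $\lambda$- and $\mu$-integrations reduces the inner $\lambda$-integral to computing, for fixed $\mu$, the residue of $(\lambda - \mu^\alpha)^{-1}(\lambda - A)^{-1}$ along $\Gamma(\pi - \omega, R)$. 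By the resolvent identity this inner integral collapses to $(\mu^\alpha - A)^{-1}$ via Cauchy's theorem, producing exactly the kernel $\int_{\Gamma(\phi,r)} e^{\mu(t-s)}(\mu^\alpha - A)^{-1}\, d\mu$ of (\ref{eq2.12}). The contour constraints $\frac{\pi}{2} < \phi < \frac{\pi-\omega}{\alpha}$ and $r^\alpha > $ the relevant radius are precisely what is needed for $\mu^\alpha$ to lie in $\rho(A)$ along $\Gamma(\phi,r)$ (here Remark \ref{re2.4}, with $\pi - \omega > \frac{\alpha\pi}{2}$, guarantees the image of the contour stays in the resolvent set) and for the exchanged integrals to converge absolutely.

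The main obstacle I anticipate is the rigorous justification of the contour manipulations: one must verify absolute convergence of the iterated integrals using the decay estimates $\|(\mu - B)^{-1}\| = O(|\mu|^{-1})$ and $\|(\lambda - A)^{-1}\| = O(|\lambda|^{-1})$ from Remark \ref{re2.4}, so that Fubini applies and the contour of the $\lambda$-integration may be closed/deformed without boundary contributions at infinity. Once the analyticity of the integrand and the $O(|\lambda|^{-1-\varepsilon})$-type decay are in hand, the residue computation and the identification of the kernel are routine, and the equality $Sf = $ (\ref{eq2.12}) follows.
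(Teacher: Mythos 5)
Your proposal follows essentially the same route as the paper's proof: specialize Proposition \ref{pr2.5} with $\eta' = \omega$ (condition (E) being supplied by Proposition \ref{pr2.4}), expand $(\lambda - B_X^\alpha)^{-1}$ via Lemma \ref{le1.2A} (d), interchange the $\lambda$- and $\mu$-integrations by an absolute-convergence (Fubini) estimate, and collapse the inner $\lambda$-integral to $(\mu^\alpha - A)^{-1}$ by Cauchy's integral formula. The paper carries out exactly these steps, including the convergence justification you anticipate (its bound uses $\|(\lambda - A)^{-1}\|_{\mL(C(\overline\Omega))} \leq C|\lambda|^{-1}$, the scalar factor $|\mu^\alpha - \lambda|^{-1} \leq C(|\mu|^\alpha + |\lambda|)^{-1}$, and $\int_0^t e^{Re(\mu)(t-s)}\,ds \leq \min\{t, |Re(\mu)|^{-1}\}$, rather than the resolvent bound for $B$ you cite), so your plan is correct and matches the paper's argument.
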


\begin{proof} (I) We consider (\ref{eq2.11}), with $\eta' = \omega$. We fix $\phi$  as in the statement and $r$ positive such that $r^\alpha > R$. Then it follows from Lemma \ref{le1.2A} (d) that, for any $\lambda$ in $\Gamma (\pi - \omega,R)$, one has
$$
(\lambda - B_X^\alpha)^{-1}f(t) =  \frac{1}{2\pi i}  \int_0^t ( \int_{\Gamma(\phi,r)} \frac{e^{\mu (t-s)}}{\lambda - \mu^\alpha} d\mu) f(s) ds.
$$
So, from (\ref{eq2.11}) we deduce, applying Cauchy's integral formula, that, for any $t$ in $[0, T]$, 
$$
\begin{array}{c}
u(t) = \frac{1}{(2\pi i)^2} \int_{\Gamma(\pi - \omega,R)} (\int_0^t (\int_{\Gamma(\phi,r)} \frac{e^{\mu(t-s)}}{\mu^\alpha - \lambda } d\mu) (\lambda - A)^{-1} f(s) ds ) d\lambda \\ \\
=  \frac{1}{2\pi i}\int_0^t (\int_{\Gamma(\phi,r)} e^{\mu(t-s)} ( \frac{1}{2\pi i}  \int_{\Gamma(\pi - \omega,R)} (\mu^\alpha - \lambda)^{-1} (\lambda - A)^{-1} d\lambda) d\mu) f(s) ds \\ \\
= \frac{1}{2\pi i}\int_0^t (\int_{\Gamma(\phi,r)} e^{\mu(t-s)} (\mu^\alpha - A)^{-1} d\mu) f(s) ds. 
\end{array}
$$
The second identity is justified by the estimate
$$
\begin{array}{c}
\int_{\Gamma(\pi - \omega,R)} (\int_0^t (\int_{\Gamma(\phi,r)} \frac{e^{Re(\mu)(t-s)}}{|\mu^\alpha - \lambda|} |d\mu| ) \| (\lambda - A)^{-1} f(s)\| ds ) |d\lambda| \\ \\
\leq C_1 \int_{\Gamma(\pi - \omega,R)} (\int_{\Gamma(\phi,r)}  \min\{t, |Re(\mu)|^{-1}\} |\lambda|^{-1} (|\mu|^\alpha + |\lambda|)^{-1} |d\mu|) |d\lambda| \\ \\
\leq C_2 \int_{\Gamma(\phi,r)} \min\{t, |Re(\mu)|^{-1}\} |\mu|^{-\alpha} ln(|\mu| + 1) |d\mu| < \infty. 
\end{array}
$$
\end{proof}

We pass to consider the abstract system (\ref{eq1.3}). 

\begin{definition} Let $X$ be a complex Banach space, $\alpha \in \R^+$, $A$ a closed operator in $X$, $f \in C([0, T]; X)$, $u_k \in X$ for each $k \in \N_0$, $k < \alpha$. A strict solution $u$ of (\ref{eq1.3}) is an element of $C([0, T]; D(A))$, such that 
$\mathbb{D}^\alpha_{X} u$ is defined and all the conditions in (\ref{eq1.3}) are satisfied pointwise. 
\end{definition}

The two following results of maximal regularity hold: 

\begin{theorem} \label{th2.1A} Suppose that (E) holds. Let $\beta \in (0, \min\{1, \alpha\})$. Then the following conditions are necessary and sufficient in order that (\ref{eq1.3}) has a unique strict solution $u$, with $\mathbb{D}^\alpha_{X} u$ and $Au$ (that is, $\mA u$) belonging to $C^\beta ([0, T]; X)$:

(a) $f \in C^\beta ([0, T]; X)$; 

(b) $u_0 \in D(A)$; 

(c) $Au_0 + f(0) \in (X, D(A))_{\beta/\alpha , \infty}$; 

(d) if $\alpha > 1$, $u_1 \in (X, D(A))_{1- \frac{1-\beta}{\alpha}, \infty}$. 

\end{theorem}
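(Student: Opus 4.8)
The plan is to prove both implications by reducing to the case of homogeneous initial data and then exploiting the representation formula (\ref{eq2.11})--(\ref{eq2.12}) together with the contour representation of Proposition \ref{pr7A}.

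For the necessity, suppose a strict solution $u$ exists with $\mathbb{D}^\alpha_X u$ and $\mathcal{A}u = Au$ in $C^\beta([0,T];X)$. Then (a) is immediate, since $f = \mathbb{D}^\alpha_X u - Au$ is a difference of two functions in $C^\beta([0,T];X)$; (b) follows from $u \in C([0,T];D(A))$ by evaluating at $t=0$. Evaluating the equation itself at $t=0$ gives $\mathbb{D}^\alpha_X u(0) = Au_0 + f(0)$, so (c) amounts to the trace statement that $\mathbb{D}^\alpha_X u(0)$ lies in $(X,D(A))_{\beta/\alpha,\infty}$. Writing $w := u - \sum_{k<\alpha}\frac{t^k}{k!}u_k$, so that $w$ has vanishing traces, $B_X^\alpha w = \mathbb{D}^\alpha_X u \in C^\beta$ and $Aw \in C^\beta$, this is precisely the assertion that the value at the origin of $B_X^\alpha w$ belongs to the interpolation space, which I would extract from the characterization in Proposition \ref{pr2.1A}(III) of $(X,D(A))_{\beta/\alpha,\infty}$ applied to $(\mu - A)^{-1}$ after Laplace-transforming $w$. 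For $\alpha \in (1,2)$ the analogous analysis of the first-order trace $u'(0) = u_1$ yields (d), with the shifted exponent $1 - \frac{1-\beta}{\alpha}$ arising from the extra factor $\lambda^{-1}$ in the Laplace symbol of the $u_1$-component.

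For the sufficiency I would first remove the initial data. When $\alpha \in (0,1)$ it suffices to set $v := u - u_0$: since the Caputo derivative annihilates constants, $v$ solves $B_X^\alpha v = Av + (Au_0 + f)$ with $v(0)=0$, and $\tilde f := Au_0 + f \in C^\beta([0,T];X)$ has $\tilde f(0) = Au_0 + f(0) \in (X,D(A))_{\beta/\alpha,\infty}$ by (c). When $\alpha \in (1,2)$ the term $u_1$ need not lie in $D(A)$, so I cannot subtract the full Taylor polynomial and apply $A$; instead I would subtract the resolvent-family correctors
$$
\phi(t) = \frac{1}{2\pi i}\int_{\Gamma(\phi,r)} e^{\lambda t}\lambda^{\alpha-1}(\lambda^\alpha - A)^{-1}u_0\, d\lambda, \qquad
\psi(t) = \frac{1}{2\pi i}\int_{\Gamma(\phi,r)} e^{\lambda t}\lambda^{\alpha-2}(\lambda^\alpha - A)^{-1}u_1\, d\lambda,
$$
which by Proposition \ref{pr7A} and the resolvent calculus satisfy $\phi(0)=u_0$, $\phi'(0)=0$, $\psi(0)=0$, $\psi'(0)=u_1$ and are homogeneous solutions ($\mathbb{D}^\alpha_X\phi = A\phi$, $\mathbb{D}^\alpha_X\psi = A\psi$). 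The residual function then solves the equation with zero initial data and a right-hand side in $C^\beta$ whose value at $0$ lies in $(X,D(A))_{\beta/\alpha,\infty}$, so everything is reduced to the forward estimate for $v = S\tilde f$. Here I would show that $Av(t) = \int_0^t AK(t-s)\tilde f(s)\,ds$, with kernel $K(\tau)=\frac{1}{2\pi i}\int_{\Gamma(\phi,r)} e^{\lambda\tau}(\lambda^\alpha-A)^{-1}d\lambda$ as in (\ref{eq2.12}), has its H\"older seminorm controlled by the standard splitting $\tilde f(s) = \tilde f(t) + [\tilde f(s)-\tilde f(t)]$: the ``frozen'' contribution is handled through $\tilde f(t)\in (X,D(A))_{\beta/\alpha,\infty}$ (propagated in time from $\tilde f(0)$ using $\tilde f \in C^\beta$), while the kernel bounds $\|AK(\tau)\|\le C\tau^{-1}$ and $\|\frac{d}{d\tau}AK(\tau)\| \le C\tau^{-2}$ follow from the resolvent estimate $\|(\lambda^\alpha - A)^{-1}\|\le C|\lambda|^{-\alpha}$ of Remark \ref{re2.4} by deforming $\Gamma(\phi,r)$. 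Since $B_X^\alpha v = \tilde f + Av$, the regularity of $B_X^\alpha v$ follows at once; the correctors $\phi,\psi$ are estimated directly from their contour representations, the bound for $\psi$ being exactly where (d) enters, via the characterization of $(X,D(A))_{1-\frac{1-\beta}{\alpha},\infty}$ as a decay rate on $\|A(\mu-A)^{-1}u_1\|$.

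The step I expect to be the main obstacle is the necessity of the interpolation conditions (c) and, for $\alpha \in (1,2)$, (d). These converse (trace) estimates do not follow from the representation formula and require showing that the optimal regularity of $\mathbb{D}^\alpha_X u$ and $Au$ forces the data into the sharp interpolation classes; this is precisely the point that, as noted before the statement, is settled only in the preprint \cite{Gu3} for $\alpha \in (1,2)$. I would carry it out by testing the resolvent characterization of Proposition \ref{pr2.1A}(III) against $\mathbb{D}^\alpha_X u(0)$ and $u_1$ and absorbing the resulting error terms using the H\"older continuity in time.
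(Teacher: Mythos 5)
Your proposal contains a genuine structural flaw in the sufficiency step for $\alpha \in (1,2)$, and it is worth noting first that the paper itself never proves Theorem \ref{th2.1A} at all: it disposes of it by citation (the case $\alpha \in (0,1]$ to \cite{ClGrLo1}, sufficiency for $\alpha \in (1,2)$ to \cite{ClGrLo2}, necessity to the preprint \cite{Gu3}), so any complete argument here goes beyond the paper. The flaw is this: if $\phi$ and $\psi$ are exact homogeneous solutions with $\phi(0)=u_0$, $\phi'(0)=0$, $\psi(0)=0$, $\psi'(0)=u_1$, then the residual $v = u - \phi - \psi$ solves the zero-data problem with right-hand side $f$ itself, whose value at $0$ is $f(0)$, \emph{not} $Au_0 + f(0)$. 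Hypothesis (c) places only the sum $Au_0 + f(0)$ in $(X,D(A))_{\beta/\alpha,\infty}$; neither $f(0)$ nor $Au_0$ separately need lie there. So your claim that the residual right-hand side ``has value at $0$ in $(X,D(A))_{\beta/\alpha,\infty}$'' is false as stated, and the splitting also fails at the other end: with only $u_0 \in D(A)$, the integrand of $A\phi(t)$ is $\lambda^{\alpha-1}(\lambda^\alpha - A)^{-1}Au_0$, of size $|\lambda|^{-1}$ on the contour, which yields boundedness of $A\phi$ but not H\"older continuity — $A\phi \in C^\beta([0,T];X)$ would require precisely $Au_0 \in (X,D(A))_{\beta/\alpha,\infty}$. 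The singular parts of $A\phi$ and of the $f(0)$-contribution to $Av$ must cancel, and your termwise decomposition decouples them. The repair is the device you already used for $\alpha \in (0,1)$: discard $\phi$ and subtract the constant $u_0$ together with $\psi$ only. Since $\mathbb{D}^\alpha_X \psi = A\psi$ and $\psi'(0)=u_1$, the function $v := u - u_0 - \psi$ satisfies $v(0)=0$, $D_t v(0)=0$ and $B^\alpha_X v = Av + \tilde f$ with $\tilde f := Au_0 + f$, so that $\tilde f(0) = Au_0 + f(0)$ is exactly the quantity in (c); meanwhile (d) is exactly what makes $A\psi \in C^\beta([0,T];X)$, since by Proposition \ref{pr2.1A}(III) the integrand of $A\psi$ decays like $|\lambda|^{\alpha-2}\cdot|\lambda|^{-\alpha(1-\frac{1-\beta}{\alpha})} = |\lambda|^{-1-\beta}$, as you in effect computed.

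Separately, your necessity argument for (c) and (d) is a statement of intent rather than a proof: ``testing the resolvent characterization of Proposition \ref{pr2.1A}(III) against $\mathbb{D}^\alpha_X u(0)$ and $u_1$ and absorbing the error terms'' \emph{is} the hard trace estimate, and the sketch gives no indication of how the absorption works (for $\alpha \in (1,2)$ this is precisely the content of \cite{Gu3}, as you correctly flag). Since the paper itself defers the entire theorem to the literature, deferring this half is defensible; but that makes the sufficiency half the part your proposal was meant to supply in full, and that is where the gap above sits. The remainder of your forward scheme — the kernel bounds $\|AK(\tau)\| \leq C\tau^{-1}$ and $\|\frac{d}{d\tau}AK(\tau)\| \leq C\tau^{-2}$ from the resolvent estimate of Remark \ref{re2.4}, and the splitting $\tilde f(s) = \tilde f(t) + [\tilde f(s) - \tilde f(t)]$ in the Da Prato--Grisvard style — is the standard and correct route once the reduction is repaired, and your necessity of (a) and (b) is fine as written.
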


\begin{theorem} \label{th2.1B} Suppose that (E) holds. Let $\beta \in (0, 1)$, $\alpha \beta < 1$. Then the following conditions are necessary and sufficient in order that (\ref{eq1.3}) has a unique strict solution $u$, with $\mathbb{D}^\alpha_{X} u$ and $Au$ belonging to $C ([0, T]; X) \cap B([0, T]; (X, D(A))_{\beta,\infty})$:

(a) $f \in C([0, T]; X) \cap B([0, T]; (X, D(A))_{\beta,\infty})$; 

(b) $u_0 \in D(A)$, $Au_0 \in (X, D(A))_{\beta,\infty}$; 

(c) if $\alpha > 1$, $u_1 \in (X, D(A))_{\beta + 1 - \frac{1}{\alpha}, \infty}$. 

\end{theorem}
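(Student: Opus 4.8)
The plan is to read Theorem \ref{th2.1B} as the interpolation-space analogue (topic (I) of the introduction) of the Hölder result \ref{th2.1A}, and to reduce it to the representation formula already produced by Proposition \ref{pr2.5} together with the resolvent characterisation of interpolation spaces in Proposition \ref{pr2.1A} (III). By (E) and Remark \ref{re2.2} both $B_X^\alpha-\delta$ and $\delta-\mA$ are positive with angles summing to less than $\pi$, so a strict solution is automatically unique and, when it exists, equals $u=S\til f$ as in (\ref{eq2.11}); this disposes of uniqueness. For existence I would first reduce to homogeneous initial data. Setting $\bar u(t):=u_0$ gives $\mbD^\alpha_X\bar u=0$ and $A\bar u(t)\equiv Au_0$, so $u-\bar u$ solves the equation with forcing $f+Au_0$; by (b) the added term is constant in $t$ and lies in $(X,D(A))_{\beta,\infty}$, so $u_0$ is absorbed. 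The datum $u_1$ (present only for $\alpha>1$) cannot be absorbed this way, since $t\,u_1$ would require $u_1\in D(A)$; instead I would solve the homogeneous problem $\mbD^\alpha_X u=Au$, $u(0)=0$, $u'(0)=u_1$ directly through the Laplace-type representation $u(t)=\frac{1}{2\pi i}\int_{\Gamma(\phi,r)}e^{\lambda t}\lambda^{\alpha-2}(\lambda^\alpha-A)^{-1}u_1\,d\lambda$, which by Proposition \ref{pr7A} belongs to $D(B_X^\alpha)$ and yields $Au(t)=\frac{1}{2\pi i}\int_{\Gamma(\phi,r)}e^{\lambda t}\lambda^{\alpha-2}[\lambda^\alpha(\lambda^\alpha-A)^{-1}-I]u_1\,d\lambda$. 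After removing both $u_0$ and $u_1$ one is left with the reduced equation $B_X^\alpha v-Av=\til f$, $v\in D(B_X^\alpha)$, that is (\ref{eq2.7}) with zero initial data, where $\til f=f+Au_0\in C([0,T];X)\cap B([0,T];(X,D(A))_{\beta,\infty})$ and $\til f(0)=f(0)+Au_0\in(X,D(A))_{\beta,\infty}$ automatically.

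For the reduced problem I would use the representation (\ref{eq2.11})--(\ref{eq2.12}) and estimate $Av=\mA S\til f$ and $B_X^\alpha v=\til f+\mA S\til f$ in the norm of $(X,D(A))_{\beta,\infty}$. The quantitative core is Proposition \ref{pr2.1A} (III): membership of $g$ in $(X,D(A))_{\beta,\infty}$ is equivalent to $\sup_s s^{\beta}\|A(se^{i\phi}-A)^{-1}g\|<\infty$. Inserting the resolvent bounds of $A$ (angle $\eta<(1-\tfrac\alpha2)\pi$, Remark \ref{re2.4}) and of $B_X^\alpha$ (type $\tfrac{\alpha\pi}{2}$, Lemma \ref{le1.2A} (e)) into the double contour integral, and using $\alpha\beta<1$ to force absolute convergence of the resulting scalar integrals in $\mu$ and $\lambda$ exactly as in the convergence estimate closing the proof of Corollary \ref{co2.1}, one obtains $\sup_t\|Av(t)\|_{(X,D(A))_{\beta,\infty}}\le C\,\|\til f\|_{B([0,T];(X,D(A))_{\beta,\infty})}$, while continuity of $Av$ in $t$ with values in $X$ follows from dominated convergence. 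The $u_1$-contribution is estimated in the same way; the exponent $\beta+1-\tfrac1\alpha$ of condition (c) is dictated by the factor $\lambda^{\alpha-2}$ together with the scaling $s=|\lambda|^{\alpha}$, since $\|A(\lambda^\alpha-A)^{-1}u_1\|\lesssim|\lambda|^{\alpha(\gamma-1)}\|u_1\|_{(X,D(A))_{\gamma,\infty}}$ with $\gamma=\beta+1-\tfrac1\alpha$ gives $\alpha(\gamma-1)=\alpha\beta-1$, precisely the balance needed; note $0<\gamma<1$ holds iff $\alpha\beta<1$, which is why that hypothesis keeps the target exponent admissible.

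For necessity, suppose a strict solution $u$ with the stated regularity exists. Then (a) is immediate from $f=\mbD^\alpha_X u-Au$, both terms lying in $C([0,T];X)\cap B([0,T];(X,D(A))_{\beta,\infty})$. From $u\in C([0,T];D(A))$ we get $u_0=u(0)\in D(A)$, and since $Au\in B([0,T];(X,D(A))_{\beta,\infty})$ and the characterising seminorm of Proposition \ref{pr2.1A} (III) is lower semicontinuous under $X$-convergence with uniform bounds, the limit $Au_0=\lim_{t\to0}Au(t)$ lies in $(X,D(A))_{\beta,\infty}$, giving (b). The delicate point is (c): recovering $u_1\in(X,D(A))_{\beta+1-\frac1\alpha,\infty}$ when $\alpha\in(1,2)$. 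Here I would invert the representation, separate the (already regular) contributions of $f$ and $u_0$ from that of $u_1$, and read off the interpolation regularity of $u_1$ from the short-time behaviour of $Au(t)$ via Proposition \ref{pr2.1A} (III) once more. I expect this to be the main obstacle, since it requires controlling the singular factor $t^{1-\alpha}$ produced by $\lambda^{\alpha-2}$ while isolating the $u_1$-component. This necessity direction is exactly what is absent from \cite{ClGrLo2}, which supplies only the sufficiency for $\alpha\in(1,2)$; I would therefore invoke the preprint \cite{Gu3} for necessity, \cite{ClGrLo2} for sufficiency, and \cite{ClGrLo1} for the complete statement in the range $\alpha\le1$.
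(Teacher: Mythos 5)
The paper never proves Theorem \ref{th2.1B} at all: immediately after the statement it attributes the case $\alpha\in(0,1]$ to \cite{ClGrLo1}, the sufficiency of (a)--(c) for $\alpha\in(1,2)$ to \cite{ClGrLo2}, and their necessity to the preprint \cite{Gu3} --- exactly the division of labour you adopt in your closing sentences, so your fallback matches the paper's actual ``proof''. What you add beyond the paper is a genuine sketch of a direct argument, and it is sound and consistent with the machinery the paper deploys later for the concrete case $X=C(\overline\Omega)$: your absorption of $u_0$ into the forcing $f+Au_0$ is the abstract version of Lemma \ref{le3.9}; your contour representation $\frac{1}{2\pi i}\int_{\Gamma}e^{\lambda t}\lambda^{\alpha-2}(\lambda^\alpha-A)^{-1}u_1\,d\lambda$ for the $u_1$-part, with the bound $\|A(\lambda^\alpha-A)^{-1}u_1\|\lesssim|\lambda|^{\alpha\beta-1}$ obtained from Proposition \ref{pr2.1A} (III), is precisely the mechanism of Lemma \ref{le3.10}; and your estimate of $\mA S\tilde f$ in the $(X,D(A))_{\beta,\infty}$-seminorm is the abstract (and easier, because trace-free) analogue of the kernel estimates in Proposition \ref{pr3.1}. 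Your exponent bookkeeping is correct, and your observation that $\gamma=\beta+1-\frac{1}{\alpha}\in(0,1)$ iff $\alpha\beta<1$ correctly identifies where that hypothesis enters.

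Two small corrections. For uniqueness, the reduction of inhomogeneous data is unnecessary and slightly misleading: the difference $w$ of two strict solutions has all data zero, hence $w-0\in D(B_X^\alpha)$ by the definition of the Caputo derivative, and Proposition \ref{pr2.5} (I) applies directly with $f=0$; in particular one never risks having to apply $A$ to $tu_1$. Second, your lower-semicontinuity argument for the necessity of (b) is superfluous: $Au\in B([0,T];(X,D(A))_{\beta,\infty})$ already means $Au(t)\in(X,D(A))_{\beta,\infty}$ for \emph{every} $t\in[0,T]$, including $t=0$, so $Au_0=Au(0)$ lies there with no limiting argument. The one real gap you flag --- the necessity of (c) when $\alpha\in(1,2)$, i.e.\ reading the regularity of $u_1$ off the short-time behaviour of the solution --- is indeed the step absent from \cite{ClGrLo2}, and the paper closes it only by citing \cite{Gu3}; deferring to that reference is therefore legitimate and is exactly what the source does.
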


As we already mentioned, the case $\alpha \in (0, 1]$ is treated in \cite{ClGrLo1}. Concerning the case $\alpha \in (1, 2)$, the sufficiency of the conditions (a)-(d) and (a)-(c) to get the conclusion is proved in \cite{ClGrLo2}. Their necessity is shown in \cite{Gu3}. 

Applying Theorems \ref{th2.1A}-\ref{th2.1B} in the case that $A$ is the operator defined in (\ref{eq2.6}), we deduce, on account of Proposition \ref{pr2.4}: 

\begin{corollary} \label{co2.2} Suppose that (A1)-(A2) are fulfilled. We consider system \ref{eq1.3} in the case $X = C(\overline \Omega)$, and $A$ as in (\ref{eq2.6}). Let $\alpha \in (0, 2)$, $\beta \in (0, \min\{1, \alpha\})$. Then the following conditions are ne\-cessary and sufficient in order that (\ref{eq1.3}) has a unique strict solution $u$, with $\mathbb{D}^\alpha_{X} u$ and $Au$ belonging to $C^\beta ([0, T]; C(\overline \Omega))$:

(a) $f \in C^\beta ([0, T]; C(\overline \Omega))$; 

(b) $u_0 \in D(A)$; 

(c) $A(\cdot,D_x) u_0 + f(0) \in (C(\overline \Omega), D(A))_{\beta/\alpha , \infty}$ ($A(\cdot,D_x) u_0 + f(0) \in C_0^{2\beta/\alpha} (\overline \Omega)$ in case $\beta \neq \frac{\alpha}{2}$); 

(c) if $\alpha > 1$, $u_1 \in (C(\overline \Omega), D(A))_{1- \frac{1-\beta}{\alpha}, \infty}$ ($u_1 \in C^{2(1- \frac{1-\beta}{\alpha})}_0 (\overline \Omega)$ in case $\beta \neq1 - \frac{\alpha}{2}$) . 

\end{corollary}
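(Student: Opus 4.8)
The plan is to read the statement off directly from Theorem \ref{th2.1A}, after checking that the abstract hypothesis (E) is met for the concrete data and then translating the abstract real interpolation spaces appearing in that theorem into the H\"older spaces listed in the corollary. Note first that the conclusion asserted here, namely that $\mathbb{D}^\alpha_{X} u$ and $Au$ belong to $C^\beta([0, T]; C(\overline \Omega))$ under the range $\beta \in (0, \min\{1, \alpha\})$, is precisely the conclusion of Theorem \ref{th2.1A}; so the whole task is to specialize that theorem.

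First I would verify (E) with $X = C(\overline \Omega)$ and $A$ as in (\ref{eq2.6}). This is immediate from Proposition \ref{pr2.4} (II): there exist $\delta \geq 0$ and $\omega \in (0, (1 - \frac{\alpha}{2})\pi)$ such that $\delta - A$ is positive of type $\omega$, so (E) holds with $\eta = \omega$. Since moreover $\beta \in (0, \min\{1, \alpha\})$, Theorem \ref{th2.1A} applies verbatim, and its conditions (a) and (b) are literally conditions (a) and (b) of the corollary (recalling that $Au = A(\cdot, D_x)u$ by the definition of $A$).

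It then remains to identify the interpolation spaces $(C(\overline \Omega), D(A))_{\xi,\infty}$ occurring in conditions (c) and (d) of Theorem \ref{th2.1A}. By Proposition \ref{pr2.4} (III), for every $\xi \in (0, 1) \setminus \{\frac12\}$ one has $(C(\overline \Omega), D(A))_{\xi,\infty} = C^{2\xi}_0(\overline \Omega)$ with equivalent norms. I would apply this with $\xi = \beta/\alpha$ for condition (c) and, when $\alpha > 1$, with $\xi = 1 - \frac{1-\beta}{\alpha}$ for condition (d). A short check using $\beta \in (0, \min\{1, \alpha\})$ shows both exponents lie in $(0, 1)$: one has $0 < \beta/\alpha < 1$ because $0 < \beta < \alpha$, and, when $\alpha > 1$, one has $0 < 1 - \frac{1-\beta}{\alpha} < 1$ because $0 < 1 - \beta < \alpha$. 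The single exceptional value $\xi = \frac12$ corresponds to $\beta = \frac{\alpha}{2}$ in (c) and to $\beta = 1 - \frac{\alpha}{2}$ in (d); away from these values the interpolation space is exactly the stated H\"older space $C^{2\xi}_0(\overline \Omega)$, which is what the parenthetical reformulations record.

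There is no genuine obstacle: the substance of the corollary is contained entirely in Proposition \ref{pr2.4} (the sectoriality estimate giving (E) and the interpolation identification (III)) together with Theorem \ref{th2.1A}, both of which are at our disposal. The only point demanding a little care is the exceptional exponent $\xi = \frac12$, at which $(C(\overline \Omega), D(A))_{1/2,\infty}$ fails to coincide with a classical H\"older space; this is precisely why the H\"older descriptions in (c) and (d) are stated under the restrictions $\beta \neq \frac{\alpha}{2}$ and $\beta \neq 1 - \frac{\alpha}{2}$, respectively, while the interpolation-space formulation remains valid without exception.
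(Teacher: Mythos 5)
Your proposal is correct and follows exactly the route the paper itself takes: Corollary \ref{co2.2} is stated in the paper as a direct specialization of Theorem \ref{th2.1A} with $X = C(\overline \Omega)$ and $A$ as in (\ref{eq2.6}), where hypothesis (E) is supplied by Proposition \ref{pr2.4} (II) and the interpolation spaces $(C(\overline \Omega), D(A))_{\xi,\infty}$ are identified as $C^{2\xi}_0(\overline \Omega)$ via Proposition \ref{pr2.4} (III). Your verification that the exponents $\beta/\alpha$ and $1 - \frac{1-\beta}{\alpha}$ lie in $(0,1)$, and your observation that the exceptional value $\xi = \frac{1}{2}$ accounts precisely for the excluded cases $\beta = \frac{\alpha}{2}$ and $\beta = 1 - \frac{\alpha}{2}$ in the parenthetical H\"older reformulations, are exactly the points the paper's deduction relies on.
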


\begin{corollary} \label{co2.3} Suppose that (A1)-(A2) are fulfilled. We consider system \ref{eq1.3} in the case $X = C(\overline \Omega)$, and $A$ as in (\ref{eq2.6}). Let $\alpha \in (0, 2)$, $\alpha \theta < 2$. Then the following conditions are necessary and sufficient in order that (\ref{eq1.3}) has a unique strict solution $u$, with $\mathbb{D}^\alpha_{X} u$ and $Au$ belonging to $C([0, T]; C(\overline \Omega)) \cap B([0, T]; C^\theta_0(\overline \Omega))$: 

(a) $f \in C ([0, T]; C(\overline \Omega)) \cap B([0, T]; C^\theta_0(\overline \Omega))$; 

(b) $u_0 \in C^{2+\theta}_0(\overline \Omega)$, $A(\cdot,D_x) u_0 \in C^{\theta}_0(\overline \Omega)$; 

(c) if $\alpha > 1$, $u_1 \in (C(\overline \Omega), D(A))_{\frac{\theta}{2} + 1 - \frac{1}{\alpha}, \infty}$ ($u_1 \in C^{\theta + 2 - \frac{2}{\alpha}}_0 (\overline \Omega)$ in case $\alpha \neq \frac{2}{\theta +1}$). 

\end{corollary}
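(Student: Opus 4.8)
The plan is to read off the statement from Theorem \ref{th2.1B} applied to $X = C(\overline\Omega)$ and $A$ as in (\ref{eq2.6}), with the choice $\beta = \frac{\theta}{2}$, and then to rewrite each of the abstract conditions (a)--(c) in concrete terms via Proposition \ref{pr2.4}. First I would verify the hypotheses: by Proposition \ref{pr2.4}(II) there is $\delta \geq 0$ with $\delta - A$ positive of type $\omega < (1 - \frac{\alpha}{2})\pi$, so (E) holds and Theorem \ref{th2.1B} is applicable. With $\beta = \frac{\theta}{2}$ the requirements $\beta \in (0,1)$ and $\alpha\beta < 1$ become $\theta \in (0,2)$ and $\alpha\theta < 2$, which are guaranteed by (A1) and (A4); moreover $\beta \neq \frac{1}{2}$ because $\theta \neq 1$.

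The key translation is the identification of the interpolation space. By Proposition \ref{pr2.4}(III), since $\frac{\theta}{2} \in (0,1) \setminus \{\frac{1}{2}\}$, one has $(C(\overline\Omega), D(A))_{\theta/2, \infty} = C^\theta_0(\overline\Omega)$ with equivalent norms. Hence the target regularity class $C([0,T]; X) \cap B([0,T]; (X, D(A))_{\beta, \infty})$ of Theorem \ref{th2.1B} is exactly $C([0,T]; C(\overline\Omega)) \cap B([0,T]; C^\theta_0(\overline\Omega))$, and condition (a) of Theorem \ref{th2.1B} becomes verbatim condition (a) of the corollary. For condition (c), when $\alpha > 1$ the abstract exponent is $\xi := \frac{\theta}{2} + 1 - \frac{1}{\alpha}$; it lies in $(0,1)$ because $\alpha > 1$ gives $\xi > \frac{\theta}{2} > 0$ while $\alpha\theta < 2$ gives $\xi < 1$, and $\xi = \frac{1}{2}$ precisely when $\alpha = \frac{2}{\theta + 1}$. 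Away from that value Proposition \ref{pr2.4}(III) again applies and gives $(C(\overline\Omega), D(A))_{\xi, \infty} = C^{2\xi}_0(\overline\Omega) = C^{\theta + 2 - 2/\alpha}_0(\overline\Omega)$, which is condition (c).

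The main point --- and the step I expect to require the most care --- is the reformulation of condition (b), namely the equivalence of ``$u_0 \in D(A)$ with $A(\cdot, D_x) u_0 \in (C(\overline\Omega), D(A))_{\theta/2, \infty} = C^\theta_0(\overline\Omega)$'' and ``$u_0 \in C^{2+\theta}_0(\overline\Omega)$ with $A(\cdot, D_x) u_0 \in C^\theta_0(\overline\Omega)$''. The implication $\Leftarrow$ is immediate: a function in $C^{2+\theta}_0(\overline\Omega)$ lies in $W^{2,p}(\Omega) \cap W^{1,p}_0(\Omega)$ for every $p < \infty$ and has $A(\cdot, D_x) u_0 \in C^\theta(\overline\Omega) \subseteq C(\overline\Omega)$, so it belongs to $D(A)$. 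For $\Rightarrow$ I would invoke elliptic regularity in the Schauder form contained in Proposition \ref{pr2.4}(IV). Given $u_0 \in D(A)$ with $g := A(\cdot, D_x) u_0 \in C^\theta_0(\overline\Omega)$, the Sobolev embedding $W^{2,p}(\Omega) \hookrightarrow C^{1,1-n/p}(\overline\Omega)$ (letting $p \to \infty$) first gives $u_0 \in C^\theta(\overline\Omega)$ for every $\theta < 2$. Then, fixing $\lambda \in \rho(-A)$, we have $u_0 = (\lambda + A)^{-1}(\lambda u_0 + g)$ with $\lambda u_0 + g \in C^\theta(\overline\Omega)$, and Proposition \ref{pr2.4}(IV) yields $u_0 \in C^{2+\theta}(\overline\Omega)$; the zero trace inherited from $u_0 \in D(A)$ then places $u_0$ in $C^{2+\theta}_0(\overline\Omega)$. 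With these three translations in hand, the necessary and sufficient conditions of Theorem \ref{th2.1B} coincide with conditions (a)--(c) of the corollary, since the solution class is unchanged.
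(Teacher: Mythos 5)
Your proposal is correct and follows exactly the route the paper intends: the paper derives Corollary \ref{co2.3} in one line by applying Theorem \ref{th2.1B} with $\beta = \frac{\theta}{2}$ and identifying the interpolation spaces via Proposition \ref{pr2.4}(III). Your only addition is to spell out the elliptic-regularity bootstrap (via Proposition \ref{pr2.4}(IV)) showing that ``$u_0 \in D(A)$ with $A(\cdot,D_x)u_0 \in C^\theta_0(\overline\Omega)$'' is equivalent to ``$u_0 \in C^{2+\theta}_0(\overline\Omega)$ with $A(\cdot,D_x)u_0 \in C^\theta_0(\overline\Omega)$'', a detail the paper leaves implicit but which matches its framework.
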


\section{Proof of Theorem \ref{th1.1}} \label{se3}

We begin with some preliminary results. 

\setcounter{equation}{0}

\begin{lemma} \label{le1.4} Suppose that (A1)-(A4) hold. Let $u \in C([0, T]; C(\overline \Omega))$ satisfy (B1)-(B2). Then:
 
 (I) if $\alpha \in (0, 2) \setminus \{1\}$, $u \in C^\alpha([0, T]; C^\theta(\overline \Omega))$; 
 
 (II) if $\alpha = 1$, $u \in Lip([0, T]; C^\theta(\overline \Omega))$; 
 
 (III) in any case, $u \in C^{\frac{\alpha \theta}{2}}([0, T]; C^2(\overline \Omega))$; 
 
 (IV) $u(0) \in C^{2+\theta}(\overline \Omega)$; 
 
 (V) if $\alpha \in (1, 2)$,  $D_tu(0) \in C^{\theta + 2(1 - 1/\alpha)}(\overline \Omega)$. 
 \end{lemma}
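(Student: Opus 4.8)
The plan is to reduce everything to the function $v:=u-\sum_{k<\alpha}\frac{t^k}{k!}D_t^k u(0)$, which by (B1) and Definition \ref{de2.2} lies in $D(B^\alpha_X)$ (with $X=C(\ov\Omega)$) and satisfies $B^\alpha_X v=\mbD^\alpha_{C(\ov\Omega)}u\in B([0,T];C^\theta(\ov\Omega))$. Since $\theta\in(0,2)\setminus\{1\}$ and $2+\theta\in(2,4)\setminus\{3\}$ are non-integers, Lemma \ref{le2.1}(III) guarantees that closed bounded subsets of $C^\theta(\ov\Omega)$ (resp. of $C^{2+\theta}(\ov\Omega)$) are closed in $C(\ov\Omega)$. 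Applying Lemma \ref{le1.3} with $X=C(\ov\Omega)$, $X_1=C^\theta(\ov\Omega)$ yields $v\in C^\alpha([0,T];C^\theta(\ov\Omega))$ if $\alpha\neq1$ and $v\in Lip([0,T];C^\theta(\ov\Omega))$ if $\alpha=1$, together with $D_t^k v(0)=0$ for $k<\alpha$. For (IV) I would note that, by (B2), the $C^{2+\theta}$-closure of $\{u(1/n):n\in\N\}$ is bounded in $C^{2+\theta}(\ov\Omega)$, hence closed in $C(\ov\Omega)$ by Lemma \ref{le2.1}(III); since $u(1/n)\to u(0)$ in $C^2(\ov\Omega)\hookrightarrow C(\ov\Omega)$, the limit $u(0)$ belongs to $C^{2+\theta}(\ov\Omega)$ (with $\|u(0)\|_{C^{2+\theta}}\leq\|u\|_{B([0,T];C^{2+\theta})}=:M$).

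The crux is (V). Here $\alpha\in(1,2)$, so $v=u-u(0)-t\,D_tu(0)$ with $v(0)=D_tv(0)=0$ and $v\in C^\alpha([0,T];C^\theta(\ov\Omega))$; the latter gives the Taylor-type bound $\|v(t+h)-v(t)-h\,D_tv(t)\|_{C^\theta(\ov\Omega)}\leq Ch^\alpha$ uniformly in $t$, and in particular $\|v(h)\|_{C^\theta(\ov\Omega)}\leq Ch^\alpha$. For every $h\in(0,T]$ I would split
$$
D_t u(0)=\tfrac1h\,[u(h)-u(0)]-\tfrac1h\,v(h),
$$
where the first term lies in $C^{2+\theta}(\ov\Omega)$ with norm $\leq 2M/h$ (by (B2) and (IV)) and the second in $C^\theta(\ov\Omega)$ with norm $\leq Ch^{\alpha-1}$. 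Estimating the $K$-functional of $D_tu(0)$ for the couple $(C^\theta(\ov\Omega),C^{2+\theta}(\ov\Omega))$ by $K(\tau,D_tu(0))\leq Ch^{\alpha-1}+2M\tau/h$ and optimizing in $h\sim\tau^{1/\alpha}$ gives $K(\tau,D_tu(0))\leq C\tau^{1-1/\alpha}$ for small $\tau$ (for large $\tau$ one uses $D_tu(0)\in C^\theta(\ov\Omega)$, obtained from the same splitting with $h=T$). Hence $D_tu(0)\in(C^\theta(\ov\Omega),C^{2+\theta}(\ov\Omega))_{1-1/\alpha,\infty}=C^{\theta+2(1-1/\alpha)}(\ov\Omega)$ by Lemma \ref{le2.1}(II) (in the generic case $\theta+2(1-1/\alpha)\notin\N$). \emph{The main obstacle is precisely isolating the scaling $h\sim\tau^{1/\alpha}$ in this $K$-functional computation}; the rest is bookkeeping.

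Running the same splitting at an arbitrary time $t_0$ (using the uniform Taylor bound and $\|u(t_0+h)-u(t_0)\|_{C^{2+\theta}(\ov\Omega)}\leq 2M$) shows moreover that $D_tu\in B([0,T];C^{\theta+2(1-1/\alpha)}(\ov\Omega))$ when $\alpha\in(1,2)$; integrating, $u\in Lip([0,T];C^{\theta+2(1-1/\alpha)}(\ov\Omega))$. With (IV)--(V) in hand, the polynomial $\sum_{k<\alpha}\frac{t^k}{k!}D_t^ku(0)$ has coefficients in $C^\theta(\ov\Omega)$ (note $\theta<2+\theta$ and $\theta<\theta+2(1-1/\alpha)$), so it lies in $C^\infty([0,T];C^\theta(\ov\Omega))$; adding it to $v$ gives (I) and (II). Finally (III) follows by interpolation, applying the inequality coming from $C^2(\ov\Omega)\in J_{1-\theta/2}(C^\theta(\ov\Omega),C^{2+\theta}(\ov\Omega))$ in Lemma \ref{le2.1}(I) to the increment $u(t)-u(s)$, which is bounded in $C^{2+\theta}(\ov\Omega)$ by (B2). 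For $\alpha\leq1$ the factor $\|u(t)-u(s)\|_{C^\theta(\ov\Omega)}\leq C|t-s|^{\min\{\alpha,1\}}$ from (I)/(II) gives directly $\|u(t)-u(s)\|_{C^2(\ov\Omega)}\leq C|t-s|^{\alpha\theta/2}$; for $\alpha\in(1,2)$ the mere Lipschitz bound in $C^\theta(\ov\Omega)$ is too weak, and one instead interpolates the couple $(C^{\theta+2(1-1/\alpha)}(\ov\Omega),C^{2+\theta}(\ov\Omega))$, in which $u$ is respectively Lipschitz and bounded in time (by the previous step and (B2)). Since the order $2$ sits at the interpolation fraction $1-\alpha\theta/2$ of this couple (here one checks $s_1:=\theta+2(1-1/\alpha)<2<2+\theta$, using (A4)), the corresponding $J$-inequality yields exactly $\|u(t)-u(s)\|_{C^2(\ov\Omega)}\leq C|t-s|^{\alpha\theta/2}$, completing (III).
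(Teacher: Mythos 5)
Your proof is correct, and its overall skeleton matches the paper's: you reduce to $v=u-\sum_{k<\alpha}\frac{t^k}{k!}D_t^ku(0)$ and apply Lemma \ref{le1.3} with $X=C(\ov\Omega)$, $X_1=C^\theta(\ov\Omega)$; you recover $D_tu(0)\in C^\theta(\ov\Omega)$ by a difference/splitting argument equivalent to the paper's observation that $tD_tu(0)=u(t)-u(0)-v(t)\in C^\theta(\ov\Omega)$; and for (III) you use exactly the paper's two interpolation couples, namely $C^2(\ov\Omega)\in J_{1-\theta/2}(C^\theta(\ov\Omega),C^{2+\theta}(\ov\Omega))$ when $\alpha\le 1$ and $C^2(\ov\Omega)\in J_{1-\frac{\alpha\theta}{2}}(C^{\theta+2(1-1/\alpha)}(\ov\Omega),C^{2+\theta}(\ov\Omega))$ when $\alpha\in(1,2)$, with the same check $\theta+2(1-1/\alpha)<2$ from (A4). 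The genuine divergence is at the crux, part (V) together with the bound (\ref{eq3.1A}): the paper obtains $D_tu\in B([0,T];(C^\theta(\ov\Omega),C^{2+\theta}(\ov\Omega))_{1-\frac1\alpha,\infty})$ by citing Theorem 3.2 of \cite{DCGuLo1}, an abstract intermediate-derivative result for $C^\alpha(I;X_0)\cap B(I;X_1)$, whereas you reprove this special case from scratch via the splitting $D_tu(t_0)=\frac1h[u(t_0+h)-u(t_0)]-\frac1h\bigl[u(t_0+h)-u(t_0)-hD_tu(t_0)\bigr]$, the uniform Taylor bound $Ch^\alpha$ in $C^\theta(\ov\Omega)$ coming from $D_tv\in C^{\alpha-1}([0,T];C^\theta(\ov\Omega))$, and the $K$-functional optimization $h\sim\tau^{1/\alpha}$ yielding $K(\tau,D_tu(t_0))\le C\tau^{1-1/\alpha}$ uniformly in $t_0$ (for $t_0$ near $T$ one takes backward increments, a triviality). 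What your route buys is self-containedness and transparency about where the exponent $1-\frac1\alpha$ comes from; what the paper's citation buys is brevity and a more general statement. One shared caveat: the identification $(C^\theta(\ov\Omega),C^{2+\theta}(\ov\Omega))_{1-\frac1\alpha,\infty}=C^{\theta+2(1-1/\alpha)}(\ov\Omega)$ via Lemma \ref{le2.1}(II) needs $\theta+2(1-1/\alpha)\notin\N$, which (A1)--(A4) do not exclude (e.g. $\alpha=4/3$, $\theta=1/2$ gives the value $1$, where the interpolation space is a Zygmund-type space strictly larger than $C^1(\ov\Omega)$); you flag this as the generic case, while the paper silently assumes it, so this is not a gap of yours relative to the paper. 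Your closure argument for (IV) is superfluous --- $u\in B([0,T];C^{2+\theta}(\ov\Omega))$ already means $u(0)\in C^{2+\theta}(\ov\Omega)$, which is why the paper calls (IV) obvious --- but it is harmless.
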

 
 \begin{proof} (IV) is obvious.
 
 We show (I). We set $v(t):= u(t) - \sum_{k < \alpha} t^k D_t^{(k)}(0)$. Then, by Lemma \ref{le1.3}, with $X = C(\overline \Omega)$, $X_1 = C^\theta(\overline \Omega)$, $v \in C^\alpha ([0, T]; C^\theta(\overline \Omega))$. 
 By (IV), in case $\alpha \in (0, 1)$, we obtain the assertion,  because
 $$u(t) = v(t) + u(0).$$
 Assume that $\alpha \in (1, 2)$. Then, by difference,
 $tD_t u(0) = u(t) - u(0) - v(t) \in C^\theta(\overline \Omega)$ for any $t \in [0, T]$. We deduce that necessarily $D_t u(0) \in C^\theta(\overline \Omega)$. So the conclusion follows from
 $$u(t) = v(t) + u(0) + t D_tu(0).$$
 The proof of (II) is similar.
 
 We show (V). By Theorem 3.2 in \cite{DCGuLo1}, from $u \in C^\alpha([0, T]; C^\theta(\overline \Omega)) \cap B([0, T]; C^{2+\theta}(\overline \Omega))$, we deduce that $D_t u$ is bounded with values in the interpolation space
 $$(C^\theta(\overline \Omega, C^{2+\theta}(\overline \Omega))_{1 - \frac{1}{\alpha}, \infty} =  C^{\theta + 2(1 - 1/\alpha)}(\overline \Omega),$$
 by Lemma \ref{le2.1} (II). From this we deduce also that 
 \begin{equation}\label{eq3.1A}
 u \in Lip([0, T]; C^{\theta + 2(1 - 1/\alpha)}(\overline \Omega)). 
 \end{equation}
 
 We show (III). We recall that $C^2(\overline \Omega)) \in J_{1-\theta/2}(C^\theta(\overline \Omega)), C^{2+\theta}(\overline \Omega))$ (by Lemma \ref{le2.1}(I)). So, in case $\alpha \in (0, 1]$, from (I)-(II) we deduce, $\forall t, s \in [0, T]$, for some positive constant $C$,
 $$
 \begin{array}{c}
 \|u(t) - u(s)\|_{C^2(\overline \Omega))} \leq C \|u(t) - u(s)\|_{C^\theta(\overline \Omega))}^{\frac{ \theta}{2}} \|u(t) - u(s)\|_{C^{2+\theta}(\overline \Omega))}^{1-\frac{ \theta}{2}} \\ \\
  \leq  2^{1-\theta/2} C \|u\|_{C^\alpha([0, T]; C^\theta(\overline \Omega))}^{\theta/2}   \|u\|_{B([0, T]; C^{2+\theta}(\overline \Omega))}^{1-\theta/2}(t-s)^{\frac{ \alpha \theta}{2}}. 
 \end{array}
 $$
 Suppose that $\alpha \in (1, 2)$. Then, (\ref{eq3.1A}) holds. Observe that $\theta + 2(1 - 1/\alpha) < 2$, because $\alpha \theta < 2$. So the conclusion follows from the fact that
 $$
 C^2(\overline \Omega) \in J_{1- \alpha \theta/2} (C^{\theta + 2(1 - 1/\alpha)}(\overline \Omega), C^{\theta + 2}(\overline \Omega)). 
 $$
 \end{proof}

 \begin{lemma}\label{le3.2}
 Suppose that the assumptions (A1)-(A4) are satisfied. Then the conditions (I)-(VI) in the statement of Theorem \ref{th1.1} are necessary, in order that there exists a solution $u$ fulfilling (B1)-(B2). 
 \end{lemma}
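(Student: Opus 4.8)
The plan is to read off every condition by applying the trace $\gamma$ to the differential equation and to the boundary relation in (\ref{eq1.1}), combined with the time-regularity already extracted in Lemma \ref{le1.4}. Two of the conditions are immediate. For (II), writing $u_0 = u(0)$ and, when $\alpha \in (1,2)$, $u_1 = D_t u(0)$, parts (IV)-(V) of Lemma \ref{le1.4} give precisely $u_0 \in C^{2+\theta}(\overline\Omega)$ and $u_1 \in C^{\theta + 2(1 - 1/\alpha)}(\overline\Omega)$. For (I), I would use $f = \mbD^\alpha_{C(\overline\Omega)} u - A(\cdot,D_x) u$: by (B1) the first term lies in $C([0,T]; C(\overline\Omega)) \cap B([0,T]; C^\theta(\overline\Omega))$, while $A(\cdot,D_x)$, having coefficients in $C^\theta(\overline\Omega)$ by (A2), maps $C^2(\overline\Omega)$ boundedly into $C(\overline\Omega)$ and $C^{2+\theta}(\overline\Omega)$ boundedly into $C^\theta(\overline\Omega)$, so (B2) places $A(\cdot,D_x)u$ in the same space as the first term.

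The structural fact behind (III), (IV) and (VI) is that the trace commutes with the Caputo derivative. Under (A1) I would treat $\gamma$ as a bounded time-independent operator in $\mL(C^\beta(\overline\Omega), C^\beta(\partial\Omega))$ for each $\beta \in [0, 2+\theta]$; acting pointwise in $t$ it commutes with the integral operator (\ref{eq1.5}), hence $\gamma B_{C(\overline\Omega)}^{-\alpha} = B_{C(\partial\Omega)}^{-\alpha} \gamma$, and by inversion $\gamma$ maps $D(B_{C(\overline\Omega)}^\alpha)$ into $D(B_{C(\partial\Omega)}^\alpha)$ and commutes with $B^\alpha$ there. Since $\gamma$ also commutes with $D_t^k$ and with evaluation at $0$, it commutes with the subtraction of the Taylor polynomial in Definition \ref{de2.2}, giving
$$
\gamma\, \mbD^\alpha_{C(\overline\Omega)} u = \mbD^\alpha_{C(\partial\Omega)} (\gamma u) = \mbD^\alpha_{C(\partial\Omega)} g .
$$
Then (III) follows by applying $\gamma$ to (B1)-(B2): $g = \gamma u$ inherits $C([0,T]; C^2(\partial\Omega)) \cap B([0,T]; C^{2+\theta}(\partial\Omega))$ from (B2), and $\mbD^\alpha_{C(\partial\Omega)} g = \gamma\, \mbD^\alpha_{C(\overline\Omega)} u$ inherits $C([0,T]; C(\partial\Omega)) \cap B([0,T]; C^\theta(\partial\Omega))$ from (B1). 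Condition (IV) is just $g = \gamma u$ (and $D_t g = \gamma D_t u$ when $\alpha \in (1,2)$) evaluated at $t = 0$, and (VI) is the traced equation $\mbD^\alpha_{C(\partial\Omega)} g = \gamma(A(\cdot,D_x)u) + \gamma f$ evaluated at $t=0$, which is legitimate because (B1) makes both sides continuous in $t$.

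Condition (V) is the step I expect to be the crux, since it requires a genuine time-H\"older estimate rather than bookkeeping. From the traced equation one has $\gamma f - \mbD^\alpha_{C(\partial\Omega)} g = -\gamma(A(\cdot,D_x)u)$, so it suffices to show $\gamma(A(\cdot,D_x)u) \in C^{\frac{\alpha\theta}{2}}([0,T]; C(\partial\Omega))$. Combining only (B1)-(B2) yields $A(\cdot,D_x)u$ bounded into $C^\theta(\overline\Omega)$ and continuous into $C(\overline\Omega)$, which does not control its modulus of continuity in $t$. The decisive input is Lemma \ref{le1.4}(III), namely $u \in C^{\frac{\alpha\theta}{2}}([0,T]; C^2(\overline\Omega))$; since $A(\cdot,D_x)$ is bounded from $C^2(\overline\Omega)$ into $C(\overline\Omega)$ and $\gamma$ from $C(\overline\Omega)$ into $C(\partial\Omega)$, it follows at once that $\gamma(A(\cdot,D_x)u) \in C^{\frac{\alpha\theta}{2}}([0,T]; C(\partial\Omega))$, which is (V). In effect the analytic heart — the interpolation argument that produces the exponent $\frac{\alpha\theta}{2}$ — has been isolated in Lemma \ref{le1.4}, so that the present lemma reduces to trace and commutation bookkeeping.
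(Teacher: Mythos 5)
Your proposal is correct and follows essentially the same route as the paper: necessity of (II) from Lemma \ref{le1.4}(IV)--(V), the identity $\gamma\,\mathbb{D}^\alpha_{C(\overline\Omega)}u=\mathbb{D}^\alpha_{C(\partial\Omega)}g$ (which the paper obtains by applying $\gamma$ to the representation $u(t)=\sum_{k<\alpha}t^k u_k+\frac{1}{\Gamma(\alpha)}\int_0^t(t-s)^{\alpha-1}h(s)\,ds$, exactly the commutation $\gamma B^{-\alpha}_{C(\overline\Omega)}=B^{-\alpha}_{C(\partial\Omega)}\gamma$ you state abstractly) yielding (III), (IV) and (VI), and (V) via $\gamma f-\mathbb{D}^\alpha_{C(\partial\Omega)}g=-\gamma(A(\cdot,D_x)u)$ together with Lemma \ref{le1.4}(III). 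You correctly identify Lemma \ref{le1.4}(III) as the analytic heart producing the exponent $\frac{\alpha\theta}{2}$; your extra detail for (I), which the paper dismisses as obvious, is accurate.
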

 
 \begin{proof} (I) is obviously necessary. 
 
 The necessity of (II) follows from Lemma \ref{le1.4} (IV)-(V). 
 
 The necessity of (IV) is clear. 
 
 We show that (III) is necessary. First, as $u \in C([0, T]; C^2(\overline \Omega)) \cap B([0, T]; C^{2+\theta}(\overline \Omega))$, necessarily $g = \gamma u \in C([0, T]; C^2(\partial \Omega)) \cap B([0, T]; C^{2+\theta}(\partial \Omega))$
 Next, we set $h:= \mbD^\alpha_{C(\overline \Omega)} u$. Then $h \in C([0, T]; C(\overline \Omega)) \cap B([0, T]; C^\theta (\overline \Omega)$ and 
 $$
 u(t) = \sum_{k < \alpha} t^k u_k + \frac{1}{\Gamma(\alpha)} \int_0^t (t-s)^{\alpha -1} h(s) ds, \quad t \in [0, T]. 
 $$
 From (IV) we obtain
  $$
 g(t) = \sum_{k < \alpha} t^k g^{(k)}(0) + \frac{1}{\Gamma(\alpha)} \int_0^t (t-s)^{\alpha -1} \gamma h(s) ds, \quad t \in [0, T], 
 $$
 implying that $\mbD^\alpha_{C(\partial \Omega)} g$ is defined  and $\mbD^\alpha_{C(\partial \Omega)} g = \gamma h = \gamma \mbD^\alpha_{C(\overline\Omega)} u$. 
 
 So $\mbD^\alpha_{C(\partial \Omega)} g$ has to belong to $C([0, T]; C(\partial \Omega)) \cap B([0, T]; C^\theta(\partial \Omega))$. 
 
 We show that (V) is necessary. We have
 $$
 \gamma f - \mbD^\alpha_{C(\partial \Omega)} g = \gamma (f - \mbD^\alpha_{C(\overline \Omega)}  u) = - \gamma (A(\cdot,D_x)u). 
 $$
 By Lemma \ref{le1.4} (III), $A(\cdot,D_x)u \in C^{\alpha \theta/2}([0, T]; C(\overline \Omega))$, so that $ \gamma (A(\cdot,D_x)u) \in C^{\alpha \theta/2}([0, T]; C(\partial \Omega))$. 
 
 Finally, (VI) follows from
 $$
 \gamma[A(\cdot,D_x) u_0 + f(0)] = \gamma (\mbD^\alpha_{C(\overline \Omega)} u)(0) =  \mbD^\alpha_{C(\partial \Omega)} g(0). 
 $$
 \end{proof}
 
 It remains to prove that the assumptions (I)-(VI) of Theorem \ref{th1.1} are also sufficient. To this aim, we begin to consider the case $u_0 = u_1 = 0$, $g \equiv 0$. So we consider the equation
 \begin{equation}\label{eq3.2}
 B^\alpha_{C(\overline \Omega)} u(t) = Au(t) + f(t), \quad t \in [0, T], 
 \end{equation}
 ($A$ is the operator defined in (\ref{eq2.6})), with the following conditions: 
 
 \medskip
 
 {\it (C1) $f \in C([0, T]; C(\overline \Omega)) \cap B([0, T]; C^\theta(\overline \Omega))$.
 
 \medskip
 
 (C2) $\gamma f  \in C^{\frac{\alpha \theta}{2}} ([0, T]; C(\partial \Omega))$. 
 
 \medskip
 
 (C3) $\gamma[f(0)] = 0$. 
 }
 
 \medskip
 
 By Corollary \ref{co2.1}, the unique possible solution of (\ref{eq3.2}) is 
 \begin{equation}\label{eq3.3}
 u(t) = \int_0^t T(t-s) f(s) ds, 
 \end{equation}
 with
 \begin{equation}\label{eq3.4}
 T(t) = \frac{1}{2\pi i} \int_{\Gamma (\phi,r)} e^{\lambda t} (\lambda^\alpha - A)^{-1} d\lambda, 
 \end{equation}
 with $\phi$ in $(\frac{\pi}{2}, \frac{\pi - \omega}{\alpha}]$ and $r \geq R^{1/\alpha}$, $\omega$ and $R$ as in Remark \ref{re2.4}. Then we have: 
 
 \begin{lemma}\label{le3.3A}
 Suppose that (C1)-(C3) are satisfied. Then the function $u$ given by (\ref{eq3.3}) is a strict solution of (\ref{eq3.2}). 
 \end{lemma}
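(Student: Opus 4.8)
The plan is to reduce Lemma \ref{le3.3A} to the abstract maximal regularity results already available, by splitting the forcing term $f$ into a part with homogeneous boundary values and a part carrying the boundary trace, and then recombining via the representation of Corollary \ref{co2.1}.

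First I would use the extension operator $R$ of Lemma \ref{le2.1}(IV) to write $f = f_0 + f_1$ with $f_1 := R\gamma f$ and $f_0 := f - R\gamma f$. Since $\gamma R h = h$, we have $\gamma f_0(t) = \gamma f(t) - \gamma f(t) = 0$ for every $t$, and (C1) together with the boundedness of $R$ and $\gamma$ gives $f_0 \in C([0, T]; C(\overline \Omega)) \cap B([0, T]; C^\theta_0(\overline \Omega))$. Hence Corollary \ref{co2.3}, applied with $u_0 = u_1 = 0$ (so that its conditions (b) and (c) hold trivially), yields a unique strict solution $u^{(1)}$ of $B^\alpha_{C(\overline \Omega)} u^{(1)} = A u^{(1)} + f_0$ with $\mbD^\alpha_{C(\overline \Omega)} u^{(1)}$ and $A u^{(1)}$ in $C([0, T]; C(\overline \Omega)) \cap B([0, T]; C^\theta_0(\overline \Omega))$.

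The delicate part is $f_1 = R\gamma f$: it lies in $C^\theta(\overline \Omega)$ but in general $\gamma f_1 = \gamma f \neq 0$, so the $C^\theta_0$-theory of Corollary \ref{co2.3} does not apply to it. Instead I would invoke Corollary \ref{co2.2} with $\beta := \frac{\alpha \theta}{2}$; note $\beta \in (0, \min\{1, \alpha\})$ because $\alpha \theta < 2$ by (A4) and $\theta < 2$ by (A1). With $u_0 = u_1 = 0$ the hypotheses of Corollary \ref{co2.2} reduce to (a) $f_1 \in C^\beta([0, T]; C(\overline \Omega))$ and (c) $f_1(0) \in (C(\overline \Omega), D(A))_{\beta/\alpha, \infty}$. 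Here (a) is exactly (C2), since $R$ maps $C^{\frac{\alpha \theta}{2}}([0, T]; C(\partial \Omega))$ boundedly into $C^{\frac{\alpha \theta}{2}}([0, T]; C(\overline \Omega))$, while (c) holds because (C3) forces $f_1(0) = R\gamma f(0) = 0$, which belongs to every interpolation space. Thus Corollary \ref{co2.2} produces a strict solution $v$ of $B^\alpha_{C(\overline \Omega)} v = A v + f_1$ with $\mbD^\alpha_{C(\overline \Omega)} v$ and $A v$ in $C^\beta([0, T]; C(\overline \Omega))$, in particular $v \in C([0, T]; D(A))$.

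Finally, by the representation and uniqueness of Corollary \ref{co2.1}, the two strict solutions just obtained coincide with the convolutions $u^{(1)}(t) = \int_0^t T(t-s) f_0(s)\, ds$ and $v(t) = \int_0^t T(t-s) f_1(s)\, ds$. By linearity of (\ref{eq3.3}) the function $u$ in (\ref{eq3.3}) equals $u^{(1)} + v$, and since all the relevant objects lie in $D(B^\alpha_{C(\overline \Omega)}) \cap C([0, T]; D(A))$, the sum is a strict solution of the equation with right-hand side $f_0 + f_1 = f$, i.e. of (\ref{eq3.2}). I expect the main obstacle to be precisely the treatment of the trace-carrying part $f_1$: a naive Duhamel estimate fails because for $\alpha \neq 1$ one does not have $\frac{d}{d\sigma} T(\sigma) = A T(\sigma)$, so $\int_0^t A T(\sigma)\, d\sigma$ diverges and mere time-H\"older continuity of the forcing is not by itself enough; it is the combination of (C2) with the compatibility (C3), which is exactly condition (c) of Corollary \ref{co2.2}, that makes the abstract fractional maximal regularity applicable.
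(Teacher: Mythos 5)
Your proof is correct and takes essentially the same route as the paper: the paper's (much terser) argument performs exactly your decomposition $f = (f - R\gamma f) + R\gamma f$ and concludes by applying Corollaries \ref{co2.3} and \ref{co2.2} to the homogeneous-trace and trace-carrying pieces, respectively. Your extra verifications --- that $\beta = \frac{\alpha\theta}{2} \in (0, \min\{1,\alpha\})$ under (A1) and (A4), that (C3) gives condition (c) of Corollary \ref{co2.2} via $R\gamma f(0) = 0$, and the identification of the abstract strict solutions with the convolution (\ref{eq3.3}) through the uniqueness and representation of Corollary \ref{co2.1} --- merely make explicit steps the paper leaves implicit.
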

 
 \begin{proof}
Let $R$ be the operator introduced in Lemma \ref{le2.1} (IV).  We observe that, for any $t$ in $[0, T]$,
 $$f(t) = (f(t) - R\gamma f(t)) + R \gamma f(t).$$
 $f - R\gamma f$ belongs to $C([0, T]; C(\overline \Omega)) \cap B([0, T]; C^\theta_0(\overline \Omega))$, $R\gamma f$ belongs to $C^{\frac{\alpha \theta}{2}} ([0, T]; C(\overline \Omega))$ and $R\gamma f(0) = 0$. So the conclusion follows from Corollaries \ref{co2.2})-\ref{co2.3}). 

 \end{proof}
 
 \begin{lemma}\label{le3.4}
 Let $\omega$ and $R$ be as in Proposition \ref{pr2.4}. Let $\xi \in [\theta, 2+\theta]$. Then there exists $C(\xi)$ positive such that, $\forall \lambda \in \C$, with $|\lambda| \geq R$, $|Arg(\lambda)| \geq \omega$, $\forall f \in C^\theta (\overline
 \Omega)$, 
 $$
\|(\lambda + A)^{-1} f\|_{C^{\xi}(\overline \Omega)} \leq C(\xi) |\lambda|^{\frac{\xi - \theta}{2} -1}(\|f\|_{C^{\theta}(\overline \Omega)} + |\lambda|^{\frac{\theta}{2}} \|\gamma f\|_{C(\partial \Omega)}). 
$$
 \end{lemma}
 
 \begin{proof} The case $\xi \in \{\theta, 2+\theta\}$ follows from Proposition \ref{pr2.4} (V). The case $\xi \in (\theta, 2+\theta)$ follows from the foregoing and Lemma \ref{le2.1} (I). 
 \end{proof}
 
 As a consequence, we obtain the following
 
 \begin{lemma}\label{le3.5}
 Let us consider the family of operators $(T(t))_{t > 0}$, introduced in (\ref{eq3.4}). Then $T \in C(\R^+; $ $\mL(C^\theta(\overline \Omega), C^{2+\theta}(\overline \Omega)))$. Moreover, for any $\xi$ in $[\theta, 2+\theta]$ there exists $C(\xi)$ positive, such that $\forall f \in C^\theta(\overline \Omega)$, $t$ in $(0, T]$, 
 $$
 \|T(t) f\|_{C^\xi(\overline \Omega)} \leq C(\xi) t^{\alpha(1 - \frac{\xi - \theta}{2})- 1}(\|f\|_{C^{\theta}(\overline \Omega)} + t^{-\frac{\alpha \theta}{2}}\|\gamma f\|_{C(\partial \Omega)}). 
 $$
 \end{lemma}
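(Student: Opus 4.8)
The plan is to estimate $T(t)f$ by carrying the $C^\xi(\overline\Omega)$-norm inside the contour integral (\ref{eq3.4}) and invoking the resolvent bound of Lemma \ref{le3.4}. First I would check that Lemma \ref{le3.4} applies along the path. On $\Gamma(\phi,r)$ one has $|\lambda|\ge r\ge R^{1/\alpha}$ and $|Arg(\lambda)|\le\phi\le\frac{\pi-\omega}{\alpha}$, so that $\mu:=-\lambda^\alpha$ satisfies $|\mu|=|\lambda|^\alpha\ge R$ and $|Arg(\mu)|\ge\pi-\alpha\phi\ge\omega$. Applying Lemma \ref{le3.4} to $(\lambda^\alpha-A)^{-1}=-(\mu+A)^{-1}$ then gives, for $f\in C^\theta(\overline\Omega)$,
$$\|(\lambda^\alpha-A)^{-1}f\|_{C^\xi(\overline\Omega)}\le C(\xi)\,|\lambda|^{\alpha(\frac{\xi-\theta}{2}-1)}\big(\|f\|_{C^\theta(\overline\Omega)}+|\lambda|^{\frac{\alpha\theta}{2}}\|\gamma f\|_{C(\partial\Omega)}\big).$$

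For the pointwise estimate the crucial device is a $t$-dependent rescaling of the contour. For $t\le 1/r$ I would deform $\Gamma(\phi,r)$ to $\Gamma(\phi,1/t)$; this is legitimate because $\lambda\mapsto(\lambda^\alpha-A)^{-1}$ is holomorphic on $\{|\lambda|\ge R^{1/\alpha},\ |Arg(\lambda)|\le\frac{\pi-\omega}{\alpha}\}$ (by Remark \ref{re2.4}), a region containing the annular sector swept during the deformation. I would then substitute $\lambda=z/t$, which sends the path to the fixed contour $\Gamma(\phi,1)$, replaces $e^{\lambda t}$ by $e^{z}$ and $d\lambda$ by $t^{-1}dz$, and turns each factor $|\lambda|^\sigma$ into $|z|^\sigma t^{-\sigma}$. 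Collecting the powers of $t$ produces exactly the prefactor $t^{\alpha(1-\frac{\xi-\theta}{2})-1}$, together with the internal factor $t^{-\frac{\alpha\theta}{2}}$ on the boundary term, and leaves the two $t$-independent integrals
$$\int_{\Gamma(\phi,1)}e^{Re(z)}|z|^{-\alpha(1-\frac{\xi-\theta}{2})}\,|dz|,\qquad \int_{\Gamma(\phi,1)}e^{Re(z)}|z|^{\alpha(\frac{\xi}{2}-1)}\,|dz|.$$
Both converge because $Re(z)=|z|\cos\phi<0$ on the two rays, so the exponential dominates every power of $|z|$, while the arc is compact and stays away from the origin, so no power of $|z|$ is singular there. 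This yields the asserted bound on $(0,1/r]$ with a constant independent of $t$ and $f$.

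For $t\in[1/r,T]$ I would argue directly on the fixed path $\Gamma(\phi,r)$: the estimate of the first paragraph, together with the trivial bound $\|\gamma f\|_{C(\partial\Omega)}\le\|f\|_{C(\overline\Omega)}\le\|f\|_{C^\theta(\overline\Omega)}$, gives $\|T(t)f\|_{C^\xi(\overline\Omega)}\le C\|f\|_{C^\theta(\overline\Omega)}$ uniformly on this compact interval, and since $t^{\alpha(1-\frac{\xi-\theta}{2})-1}$ is bounded below by a positive constant there, this is absorbed into the claimed bound. For the continuity $T\in C(\R^+;\mL(C^\theta(\overline\Omega),C^{2+\theta}(\overline\Omega)))$ I would take $\xi=2+\theta$, so that the first-paragraph bound combined with the trace estimate gives $\|(\lambda^\alpha-A)^{-1}\|_{\mL(C^\theta(\overline\Omega),C^{2+\theta}(\overline\Omega))}\le C|\lambda|^{\frac{\alpha\theta}{2}}$, and then write $T(t)-T(t')=\frac{1}{2\pi i}\int_{\Gamma(\phi,r)}(e^{\lambda t}-e^{\lambda t'})(\lambda^\alpha-A)^{-1}d\lambda$. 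For $t,t'$ ranging in a compact subset of $\R^+$ the integrand is dominated by an integrable function (exponential decay of $e^{\lambda t}$ on the rays) and tends to $0$ pointwise as $t'\to t$, so dominated convergence gives continuity.

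The step I expect to be the main obstacle is the contour rescaling: one must verify that the deformation of $\Gamma(\phi,r)$ into $\Gamma(\phi,1/t)$ remains inside the holomorphy and decay region of $(\lambda^\alpha-A)^{-1}$, and that after the substitution $\lambda=z/t$ the resolvent estimate of Lemma \ref{le3.4} still applies on $\Gamma(\phi,1)$, i.e. the rescaled points again satisfy $|z/t|^\alpha\ge R$ and the argument condition (this forces the constraint $t\le 1/r$ used above). Everything else, namely the convergence of the two model integrals and the treatment of $t$ bounded away from $0$, is routine.
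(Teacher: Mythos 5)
Your proposal is correct and follows essentially the same route as the paper: rescale the contour by the factor $1/t$ (substitution $\lambda = z/t$), apply the resolvent estimate of Lemma \ref{le3.4} to $(\lambda^{\alpha}-A)^{-1}$ via $\mu = -\lambda^{\alpha}$, and collect the powers of $t$. The only cosmetic difference is that the paper chooses $r \geq R^{1/\alpha}T$ at the outset so that the single deformation to $\Gamma(\phi, r/t)$ is admissible for all $t \in (0,T]$, which avoids your case split at $t = 1/r$; your explicit verification of the angle condition $|Arg(-\lambda^{\alpha})| \geq \pi - \alpha\phi \geq \omega$ and of the continuity claim are points the paper leaves implicit.
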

 
 \begin{proof} Again, we fix $\phi$ in $(\frac{\pi}{2}, \frac{\pi - \omega}{\alpha}]$ and $r \geq R^{1/\alpha}T$, with $\omega$ and $R$ as in Remark \ref{re2.4}. 
 Then we have
 $$
 T(t) f = \frac{1}{2\pi i} \int_{\Gamma (\phi,\frac{r}{t})} e^{\lambda t} (\lambda^\alpha - A)^{-1} f d\lambda = \frac{t^{-1}}{2\pi i} \int_{\Gamma (\phi,r)} e^{\lambda} (t^{-\alpha} \lambda^\alpha - A)^{-1} f d\lambda
 $$
 so that
 $$
 \|T(t) f\|_{C^\xi(\overline \Omega)} \leq C_1(\xi) t^{-1} \int_{\Gamma (\phi,r)} e^{Re\mu}  |t^{-\alpha} \mu^\alpha|^{\frac{\xi - \theta}{2} -1}(\|f\|_{C^{\theta}(\overline \Omega)} + |t^{-\alpha} \mu^\alpha|^{\frac{\theta}{2}} \|\gamma f\|_{C(\partial \Omega)}) |d\mu|, $$
 which implies the statement. 
  \end{proof}
  
  \begin{lemma}\label{le3.6}
  If $t \in \R^+$, we set 
  \begin{equation}\label{eq3.5}
  T_1(t):= \int_0^t T(s) ds. 
  \end{equation}
  Then $T_1 \in C(\R^+; \mL(C^\theta(\overline \Omega), C^{2+\theta}(\overline \Omega))$. Moreover, for any $\xi$ in $[\theta, 2+\theta]$ there exists $C(\xi)$ positive, such that $\forall f \in C^\theta(\overline \Omega)$, $t$ in $(0, T]$, 
 $$
 \|T_1(t) f\|_{C^\xi(\overline \Omega)} \leq C(\xi) t^{\alpha(1 - \frac{\xi - \theta}{2})}(\|f\|_{C^{\theta}(\overline \Omega)} + t^{-\frac{\alpha \theta}{2}}\|\gamma f\|_{C(\partial \Omega)}). 
 $$
  \end{lemma}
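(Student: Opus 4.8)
The plan is to derive both claims from Lemma \ref{le3.5} by integration. Recall that $T_1(t)f = \int_0^t T(s)f\,ds$, and that Lemma \ref{le3.5} gives, for each $\xi \in [\theta, 2+\theta]$, the pointwise bound $\|T(s)f\|_{C^\xi(\overline\Omega)} \leq C(\xi) s^{\alpha(1-\frac{\xi-\theta}{2})-1}(\|f\|_{C^\theta(\overline\Omega)} + s^{-\frac{\alpha\theta}{2}}\|\gamma f\|_{C(\partial\Omega)})$. The exponent $\alpha(1-\frac{\xi-\theta}{2})-1$ lies in $(-1, \alpha-1]$ once we observe that $\xi \leq 2+\theta$ forces $\alpha(1-\frac{\xi-\theta}{2}) \geq 0$, and assumption (A4) ($\alpha\theta < 2$) guarantees that the companion exponent $\alpha(1-\frac{\xi-\theta}{2})-1-\frac{\alpha\theta}{2}$ also exceeds $-1$; both integrands are therefore locally integrable near $s=0$. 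This integrability is the essential point, and it is exactly why (A4) is imposed.

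First I would fix $\xi \in [\theta, 2+\theta]$ and estimate $\|T_1(t)f\|_{C^\xi(\overline\Omega)}$ by passing the $C^\xi$-norm inside the integral (valid because, by Lemma \ref{le3.5}, $T \in C(\R^+; \mL(C^\theta(\overline\Omega), C^{2+\theta}(\overline\Omega)))$ and the bound shows the integrand is Bochner integrable on $(0,t]$ with values in $C^\xi(\overline\Omega)$). This yields
$$
\|T_1(t)f\|_{C^\xi(\overline\Omega)} \leq C(\xi)\int_0^t s^{\alpha(1-\frac{\xi-\theta}{2})-1}\,ds\,\|f\|_{C^\theta(\overline\Omega)} + C(\xi)\int_0^t s^{\alpha(1-\frac{\xi-\theta}{2})-1-\frac{\alpha\theta}{2}}\,ds\,\|\gamma f\|_{C(\partial\Omega)}.
$$
The first integral equals a constant times $t^{\alpha(1-\frac{\xi-\theta}{2})}$, and the second equals a constant times $t^{\alpha(1-\frac{\xi-\theta}{2})-\frac{\alpha\theta}{2}}$. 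Factoring $t^{\alpha(1-\frac{\xi-\theta}{2})}$ out of both terms produces precisely the asserted estimate with a new constant $C(\xi)$, where the $\|\gamma f\|$ term carries the factor $t^{-\frac{\alpha\theta}{2}}$.

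For the continuity statement $T_1 \in C(\R^+; \mL(C^\theta(\overline\Omega), C^{2+\theta}(\overline\Omega)))$, I would argue that $T_1$ is an indefinite integral of the $\mL(C^\theta(\overline\Omega), C^{2+\theta}(\overline\Omega))$-valued function $T$, which by Lemma \ref{le3.5} is continuous on $\R^+$ and, by the case $\xi = 2+\theta$, has an operator-norm bound of order $s^{-1}$ near $s=0$ (ignoring the $\gamma$-term, which only improves integrability on account of (A4)). Since $s \mapsto \|T(s)\|_{\mL(C^\theta, C^{2+\theta})}$ is integrable on $(0,t]$, the map $t \mapsto T_1(t) = \int_0^t T(s)\,ds$ is continuous from $\R^+$ into $\mL(C^\theta(\overline\Omega), C^{2+\theta}(\overline\Omega))$ by dominated convergence, and in fact extends continuously to $t=0$ with $T_1(0)=0$.

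The only delicate point is bookkeeping of the exponents: one must check that $\alpha(1-\frac{\xi-\theta}{2})-1 > -1$ and $\alpha(1-\frac{\xi-\theta}{2})-1-\frac{\alpha\theta}{2} > -1$ hold uniformly over $\xi \in [\theta, 2+\theta]$, the worst case being $\xi = 2+\theta$, where the two conditions reduce to $\alpha\cdot 0 - 1 > -1$ (an equality, marginally failing) and hence the $\|\gamma f\|$ term with its saving $\frac{\alpha\theta}{2}$ must be used, together with (A4), to secure integrability. I expect this verification at the endpoint $\xi = 2+\theta$ to be the main obstacle, since there the first integrand behaves like $s^{-1}$ and only the boundary-trace term, improved by the hypothesis $\alpha\theta < 2$, keeps everything finite. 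Once the integrability is confirmed, the rest is the routine power-integration carried out above.
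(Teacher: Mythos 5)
There is a genuine gap, and it sits exactly at the point you flagged as ``the main obstacle'': your exponent bookkeeping is wrong, and the direct integration of Lemma \ref{le3.5} does not survive it. In the bound of Lemma \ref{le3.5} the trace term carries the factor $s^{-\frac{\alpha\theta}{2}}$, so its exponent $\alpha(1-\frac{\xi-\theta}{2})-1-\frac{\alpha\theta}{2}$ is \emph{more} negative than the other one, not less: it exceeds $-1$ if and only if $\alpha(1-\frac{\xi-\theta}{2}-\frac{\theta}{2})>0$, i.e.\ if and only if $\xi<2$ --- a condition independent of (A4), which plays no role here. Hence termwise integration only works for $\xi\in[\theta,2)$ (this is precisely why the paper's proof says the integral in (\ref{eq3.5}) converges in $\mL(C^\theta(\overline\Omega),C^\xi(\overline\Omega))$ only for $\xi\in[\theta,2)$). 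At the endpoint $\xi=2+\theta$ the two exponents are $-1$ and $-1-\frac{\alpha\theta}{2}$, so \emph{both} integrands fail to be integrable near $s=0$; your closing claim that ``the boundary-trace term, improved by the hypothesis $\alpha\theta<2$, keeps everything finite'' is backwards --- that term is the more singular of the two, and nothing in the naive approach rescues it. For the same reason your continuity argument collapses: $\|T(s)\|_{\mL(C^\theta(\overline\Omega),C^{2+\theta}(\overline\Omega))}\lesssim s^{-1-\frac{\alpha\theta}{2}}$ near $s=0$ (using $\|\gamma f\|_{C(\partial\Omega)}\leq C\|f\|_{C^\theta(\overline\Omega)}$), which is not integrable, so $T_1(t)$ is not even obviously a well-defined element of $\mL(C^\theta(\overline\Omega),C^{2+\theta}(\overline\Omega))$ by your route, and the asserted extension $T_1(0)=0$ in that operator norm is false (the stated bound at $\xi=2+\theta$ does not vanish as $t\to 0$, because of the $t^{-\frac{\alpha\theta}{2}}$ factor).

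The missing idea is the one the paper uses: bypass the time integral altogether via the contour representation
$$
T_1(t)=\frac{1}{2\pi i}\int_{\Gamma(\phi,r)} e^{\lambda t}\,\lambda^{-1}(\lambda^\alpha-A)^{-1}\,d\lambda,
$$
verified by noting that the right-hand side vanishes at $t=0$ and has derivative $T(t)$ for $t>0$. The extra factor $\lambda^{-1}$, absent from the representation (\ref{eq3.4}) of $T(t)$, makes this integral absolutely convergent in $\mL(C^\theta(\overline\Omega),C^{2+\theta}(\overline\Omega))$ and gains exactly one power of $t$ when one repeats the rescaling argument of Lemma \ref{le3.5} (deform to $\Gamma(\phi,r/t)$, substitute $\lambda=\mu/t$, and apply the resolvent estimate of Lemma \ref{le3.4}). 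That is how the stated bound is obtained for \emph{all} $\xi\in[\theta,2+\theta]$, endpoint included; your computation for $\xi\in[\theta,2)$ is fine but cannot be pushed to the range $[2,2+\theta]$ where the lemma is actually needed.
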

   \begin{proof} We start by observing that, in force of Lemma \ref{le3.5}, the integral in (\ref{eq3.5}) converges in $\mL(C^\theta(\overline \Omega),$ $C^\xi(\overline \Omega))$, for any $\xi$ in $[\theta, 2)$. In general, it is easily seen that
   $$
   T_1(t) = \frac{1}{2\pi i} \int_{\Gamma (\phi,r)} e^{\lambda t} \lambda^{-1} (\lambda^\alpha - A)^{-1} d\lambda. 
   $$
In fact, the second term vanishes for $t = 0$ and has derivative $T(t)$ for $t$ positive.   So the assertion can be obtained with the same method of Lemma \ref{le3.5}. 
  \end{proof}
  
  \begin{lemma} \label{le3.7} Let $\alpha \in (0, 2)$, let $\phi \in (\frac{\pi}{2}, \frac{\pi}{\alpha})$, $r, \xi$ positive, such that $r^\alpha < \xi$. Then, for any $t$ positive
  $$
 h(t,\xi):=  \frac{1}{2\pi i} \int_{\Gamma (\phi,r)} \frac{e^{\lambda t}}{\lambda^\alpha - \xi} d\lambda  = \frac{1}{\pi} \int_0^\infty \frac{e^{-t \tau} \tau^\alpha \sin(\alpha \pi)} {\tau^{2\alpha} - 2\xi \cos(\alpha \pi) \tau^\alpha + \xi^2} d\tau. 
  $$
  \end{lemma}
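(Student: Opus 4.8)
The plan is to evaluate $h(t,\xi)$ by collapsing the contour $\Gamma(\phi,r)$ onto the two banks of the branch cut of $\lambda\mapsto\lambda^\alpha$ and reading off the resulting real integral. First I would record the analytic structure of the integrand $g(\lambda):=e^{\lambda t}(\lambda^\alpha-\xi)^{-1}$. Taking the principal branch of $\lambda^\alpha$, holomorphic on $\C\setminus(-\infty,0]$, the equation $\lambda^\alpha=\xi$ with $\xi>0$ and $\alpha\in(0,2)$ has the unique solution $\lambda_0:=\xi^{1/\alpha}$ in the cut plane, since any other root would require $Arg(\lambda)=\pm 2\pi/\alpha\notin(-\pi,\pi)$. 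Hence $g$ is meromorphic on the cut plane with a single simple pole at $\lambda_0$, and the hypothesis $r^\alpha<\xi$ forces $\lambda_0>r$; in particular $\lambda_0$ lies in the open sector $\{|Arg(\lambda)|<\phi\}$, i.e.\ strictly to the right of $\Gamma(\phi,r)$, and is therefore \emph{not} enclosed.

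Second, I would show by Cauchy's theorem that the value of the integral is unchanged if the rays of $\Gamma(\phi,r)$ are rotated outward from angle $\phi$ toward $\pi$ and the radius $r$ is sent to $0$. Concretely, truncate the contour at a large radius $\rho$, rotate the two rays into angles $\pm\psi$ with $\psi\uparrow\pi$, and close with two arcs at radius $\rho$ joining the angles $\phi$ and $\psi$. Because $\phi>\pi/2$, one has $\cos\eta<0$ for every angle $\eta\in[\phi,\pi)$ occurring in the sweep, so $|e^{\lambda t}|=e^{|\lambda|t\cos\eta}$ decays and the closing arcs contribute $O(\rho^{1-\alpha}e^{\rho t\cos\phi})\to0$ as $\rho\to\infty$; the integrals along the rotating rays converge for the same reason. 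Crucially, throughout this homotopy the pole $\lambda_0$ is avoided: on the rotating rays and the large closing arcs the argument stays in $[\phi,\pi)$, bounded away from $Arg(\lambda_0)=0$, while on the circle of radius $r$ one has $r<\lambda_0$. Hence no residue is collected, consistent with the purely decaying right-hand side. Letting $\psi\uparrow\pi$ and $r\downarrow0$, the small circle contributes $O(r)\to0$ since $|\lambda^\alpha-\xi|\ge\xi-r^\alpha$ there, and $\Gamma(\phi,r)$ collapses onto the two banks of $(-\infty,0]$.

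Third, I would compute the two resulting line integrals. On the lower bank $\lambda=\sigma e^{-i\pi}$ one has $\lambda^\alpha=\sigma^\alpha e^{-i\alpha\pi}$, and on the upper bank $\lambda^\alpha=\sigma^\alpha e^{i\alpha\pi}$; parametrising by $\sigma\in(0,\infty)$ with $d\lambda=-d\sigma$ and respecting the orientation from $\infty e^{-i\phi}$ to $\infty e^{i\phi}$ gives
\[
h(t,\xi)=\frac{1}{2\pi i}\int_0^\infty e^{-\sigma t}\Big(\frac{1}{\sigma^\alpha e^{-i\alpha\pi}-\xi}-\frac{1}{\sigma^\alpha e^{i\alpha\pi}-\xi}\Big)\,d\sigma.
\]
Putting the bracket over a common denominator turns the numerator into $\sigma^\alpha(e^{i\alpha\pi}-e^{-i\alpha\pi})=2i\sigma^\alpha\sin(\alpha\pi)$ and the denominator into $|\sigma^\alpha e^{i\alpha\pi}-\xi|^2=\sigma^{2\alpha}-2\xi\cos(\alpha\pi)\sigma^\alpha+\xi^2$, which yields the asserted formula after cancelling $2i$ and renaming $\sigma$ to $\tau$. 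The main obstacle is the rigorous justification of the contour collapse in the second step: verifying that $\lambda_0$ is never crossed, so that no exponentially growing residue term $e^{\lambda_0 t}$ appears, and controlling the arcs at infinity together with the vanishing small circle. Once this is in place the algebraic reduction of the third step is routine, and one checks \emph{en passant} that the final integrand is integrable (it is $O(\sigma^\alpha)$ near $0$ and exponentially small at $\infty$) and that its denominator is strictly positive for $\alpha\in(0,2)$.
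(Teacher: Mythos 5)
Your proof is correct and takes essentially the same route as the paper: the paper likewise rotates the contour onto the two banks of the cut $(-\infty,0]$ (parametrizing the rays and sweeping the angle $\beta$ from $\phi$ to $\pi$, the observation $0<\alpha\beta<2\pi$ playing the role of your avoidance of the unique pole $\lambda_0=\xi^{1/\alpha}$), and then combines the two bank integrals algebraically exactly as you do. Your explicit arc estimates, the $O(r)$ bound on the shrinking circle, and the pole-tracking during the homotopy simply spell out what the paper compresses into ``standard properties of holomorphic functions.''
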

  \begin{proof} By standard properties of holomorphic functions, we have 
  $$
  \begin{array}{c}
  h(t,\xi) = \frac{1}{2\pi i} (\int_0^\infty \frac{e^{e^{i\phi} \tau t}}{e^{i \alpha \phi} \tau^\alpha - \xi} e^{i\phi} d\tau - \int_0^\infty \frac{e^{e^{-i\phi} \tau t}}{e^{-i \alpha \phi} \tau^\alpha - \xi} e^{-i\phi} d\tau) \\ \\
  = \frac{1}{2\pi i} (\int_0^\infty \frac{e^{e^{i\beta} \tau t}}{e^{i \alpha \beta} \tau^\alpha - \xi} e^{i\beta} d\tau - \int_0^\infty \frac{e^{e^{-i\beta} \tau t}}{e^{-i \alpha \beta} \tau^\alpha - \xi} e^{-i\beta} d\tau)
  \end{array}
  $$
  for any $\beta \in [\phi, \pi]$, as $0 < \alpha \beta < 2\pi$ and $0 > - \alpha \beta > - 2\pi$. Taking $\beta = \pi$, we obtain
  $$
  h(t,\xi) = \frac{1}{2\pi i} (- \int_0^\infty \frac{e^{-\tau t}}{e^{i \alpha \pi} \tau^\alpha - \xi} d\tau + \int_0^\infty \frac{e^{-\tau t}}{e^{-i \alpha \pi} \tau^\alpha - \xi} d\tau),
  $$
  from which the assertion follows. 
\end{proof}

\begin{remark}
{\rm In case $\alpha = 1$, we have $h(t,\xi) = 0$. }
\end{remark}

In the following we shall use several times the  elementary

\begin{lemma}\label{le3.8}
Let $a, b, c, d$ be real numbers, such that $a > 0$, $-1 < b < c < b+d$. Then there exists $C$ positive, depending on $a, b, c, d$, such that, for any $\xi > 0$, 
$$
\int_{\R^+ \times \R^+} e^{-at\tau} \frac{t^b \tau^c}{\tau^d + \xi} dt d\tau = C \xi^{\frac{c-b}{d} -1}.
$$
\end{lemma}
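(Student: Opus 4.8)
The plan is to reduce the double integral to a product of an elementary Gamma integral in $t$ and a one-dimensional integral in $\tau$ that manifestly carries the claimed power of $\xi$ after rescaling. Writing
$$
I(\xi) := \int_{\R^+ \times \R^+} e^{-at\tau}\,\frac{t^b \tau^c}{\tau^d + \xi}\, dt\, d\tau,
$$
I first observe that the integrand is nonnegative, so Tonelli's theorem allows me to iterate the integral in any order, and the only thing to control is finiteness of the final value. I would integrate in $t$ first, for fixed $\tau > 0$: since $a\tau > 0$ and $b > -1$, the substitution $u = a\tau t$ gives
$$
\int_0^\infty e^{-at\tau}\, t^b\, dt = \frac{\Gamma(b+1)}{(a\tau)^{b+1}},
$$
where the hypothesis $b > -1$ is exactly what guarantees convergence at $t = 0$. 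Inserting this back yields
$$
I(\xi) = \frac{\Gamma(b+1)}{a^{b+1}} \int_0^\infty \frac{\tau^{c-b-1}}{\tau^d + \xi}\, d\tau.
$$

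Next I would treat the remaining single integral $J(\xi) := \int_0^\infty \tau^{c-b-1}(\tau^d + \xi)^{-1}\, d\tau$. Its convergence is governed precisely by the two remaining hypotheses: near $\tau = 0$ the integrand behaves like $\xi^{-1}\tau^{c-b-1}$, integrable because $c - b - 1 > -1$ (that is, $c > b$); near $\tau = \infty$ it behaves like $\tau^{c-b-1-d}$, integrable because $c - b - 1 - d < -1$ (that is, $c < b+d$; note that $b < c < b+d$ forces $d > 0$). To extract the $\xi$-dependence I would rescale by setting $\tau = \xi^{1/d} s$, so that $\tau^d + \xi = \xi(s^d + 1)$ and $d\tau = \xi^{1/d}\, ds$, giving
$$
J(\xi) = \xi^{\frac{c-b-1}{d} + \frac{1}{d} - 1} \int_0^\infty \frac{s^{c-b-1}}{s^d + 1}\, ds = \xi^{\frac{c-b}{d} - 1} \int_0^\infty \frac{s^{c-b-1}}{s^d + 1}\, ds,
$$
where the last integral is a finite positive constant independent of $\xi$ (finite by the same two estimates). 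Collecting constants,
$$
I(\xi) = \frac{\Gamma(b+1)}{a^{b+1}} \left( \int_0^\infty \frac{s^{c-b-1}}{s^d + 1}\, ds \right) \xi^{\frac{c-b}{d} - 1} =: C\, \xi^{\frac{c-b}{d} - 1},
$$
which is the assertion, with $C > 0$.

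There is essentially no hard part: the only genuine checks are the two convergence conditions at $0$ and at $\infty$, which reproduce the hypotheses $b < c$ and $c < b+d$ verbatim, together with $b > -1$ for the $t$-integral, while the applicability of Tonelli is immediate from nonnegativity. If an explicit value of $C$ were desired, the substitution $v = s^d$ would turn $\int_0^\infty s^{c-b-1}(s^d+1)^{-1}\, ds$ into a Beta-function integral equal to $\frac{\pi}{d\,\sin\!\big(\pi(c-b)/d\big)}$, but this is not needed for the statement as phrased.
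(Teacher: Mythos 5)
Your proof is correct and complete: the Fubini--Tonelli reduction, the Gamma integral in $t$ (using $b>-1$), the convergence checks at $0$ and $\infty$ (using $b<c$ and $c<b+d$, with $d>0$ forced by the hypotheses), and the homogeneity rescaling $\tau=\xi^{1/d}s$ are all in order, and the exponent $\frac{c-b}{d}-1$ comes out right. The paper explicitly omits the proof as simple, and your argument is precisely the standard computation it has in mind, so there is nothing to compare beyond noting that your explicit Beta-function evaluation of the constant is a small bonus the paper does not need.
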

We omit the simple proof. 

Now we are in position to show the following

\begin{proposition}\label{pr3.1} Suppose that (A1)-(A4) and (C1)-(C3) are fulfilled. Let $u$ be as in (\ref{eq3.3}). Then $u$ is a strict solution of (\ref{eq3.2}). Moreover, $B^\alpha_{C(\overline \Omega)} u \in B([0, T]; C^\theta (\overline \Omega))$ and $u \in B([0, T]; C^{2+\theta}(\overline \Omega))$. 
\end{proposition}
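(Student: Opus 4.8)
Since Lemma~\ref{le3.3A} already shows that the function $u$ given by (\ref{eq3.3}) is a strict solution of (\ref{eq3.2}), we have $B^\alpha_{C(\overline\Omega)}u(t) = Au(t) + f(t)$ for every $t \in [0,T]$. As $A(\cdot,D_x)$ maps $C^{2+\theta}(\overline\Omega)$ boundedly into $C^\theta(\overline\Omega)$ by (A2), and $f \in B([0,T];C^\theta(\overline\Omega))$ by (C1), the whole statement reduces to the single bound
$$
\sup_{t\in[0,T]}\|u(t)\|_{C^{2+\theta}(\overline\Omega)} < \infty,
$$
from which $B^\alpha_{C(\overline\Omega)}u = Au + f \in B([0,T];C^\theta(\overline\Omega))$ follows at once. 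Hence I would concentrate entirely on estimating the top-order spatial norm of $u$, uniformly in $t$.

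The plan is to split the datum according to its boundary trace. Let $R$ be the extension operator of Lemma~\ref{le2.1}(IV) and write $f = f_0 + f_1$ with $f_0 := f - R\gamma f$ and $f_1 := R\gamma f$. Then $\gamma f_0 = 0$, so $f_0 \in C([0,T];C(\overline\Omega)) \cap B([0,T];C^\theta_0(\overline\Omega))$, while $f_1 \in C^{\frac{\alpha\theta}{2}}([0,T];C(\overline\Omega)) \cap B([0,T];C^\theta(\overline\Omega))$ with $\gamma f_1 = \gamma f$ and, by (C3), $f_1(0) = 0$. Correspondingly $u = u_0 + u_1$, where $u_i(t) = \int_0^t T(t-s)f_i(s)\,ds$. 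For $u_0$ I would invoke Corollary~\ref{co2.3} with vanishing initial data: since $f_0 \in B([0,T];C^\theta_0(\overline\Omega))$, it yields $Au_0 \in B([0,T];C^\theta_0(\overline\Omega))$ and $\gamma u_0 = 0$, hence $u_0 \in B([0,T];C^{2+\theta}(\overline\Omega))$ by Proposition~\ref{pr2.4}(IV). This step is indispensable, because $f_0$ is only bounded (not time-continuous) with values in $C^\theta$, so the kernel estimate of Lemma~\ref{le3.5} cannot be applied here; the abstract maximal regularity of Corollary~\ref{co2.3} is exactly what compensates.

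For the boundary part $u_1$ I would freeze the datum at time $t$, writing
$$
u_1(t) = \int_0^t T(\sigma)\,[f_1(t-\sigma)-f_1(t)]\,d\sigma + T_1(t)f_1(t),
$$
with $T_1$ as in (\ref{eq3.5}). The frozen term is controlled by Lemma~\ref{le3.6} with $\xi = 2+\theta$: there the exponent is $\alpha(1-\tfrac{\xi-\theta}{2})=0$, so $\|T_1(t)f_1(t)\|_{C^{2+\theta}} \le C(\|f_1(t)\|_{C^\theta} + t^{-\frac{\alpha\theta}{2}}\|\gamma f_1(t)\|_{C(\partial\Omega)})$. Since $\|f_1(t)\|_{C^\theta} \le C\|\gamma f(t)\|_{C^\theta(\partial\Omega)}$ is bounded and $\|\gamma f_1(t)\|_{C(\partial\Omega)} = \|\gamma f(t)-\gamma f(0)\|_{C(\partial\Omega)} \le C\,t^{\frac{\alpha\theta}{2}}$ by (C2)--(C3), the factor $t^{-\frac{\alpha\theta}{2}}$ is absorbed and this term is bounded uniformly in $t$.

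The main obstacle is the remaining integral $\int_0^t T(\sigma)[f_1(t-\sigma)-f_1(t)]\,d\sigma$. A direct application of Lemma~\ref{le3.5} at $\xi = 2+\theta$ is insufficient: both the interior contribution $\sigma^{-1}\|f_1(t-\sigma)-f_1(t)\|_{C^\theta}$ and the boundary contribution $\sigma^{-1-\frac{\alpha\theta}{2}}\|\gamma f_1(t-\sigma)-\gamma f_1(t)\|_{C(\partial\Omega)}$ produce a non-integrable $\sigma^{-1}$ singularity even after using the $\frac{\alpha\theta}{2}$-H\"older continuity of $\gamma f$. I would therefore not estimate the full $C^{2+\theta}$ norm of the integrand, but instead bound the spatial H\"older seminorm of $Au_1(t)$ directly: for $x,y \in \overline\Omega$ I would write $Au_1(t)(x)-Au_1(t)(y)$ through the explicit kernel $T(\sigma) = \frac{1}{2\pi i}\int_{\Gamma(\phi,r)} e^{\lambda\sigma}(\lambda^\alpha-A)^{-1}\,d\lambda$, split the $\sigma$-integral at a scale comparable to a power of $|x-y|$, and use the gain $|x-y|^\theta$ from the difference quotient to restore integrability. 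The sharp quantitative input for this is the refined resolvent bound of Lemma~\ref{le3.4}, together with the scalar kernel identity of Lemma~\ref{le3.7} and the integral evaluation of Lemma~\ref{le3.8}, which make the cancellation explicit and turn the borderline estimate into a convergent one. This computational core, exploiting simultaneously the spatial cancellation and the time regularity of $\gamma f$, is where the real difficulty lies.
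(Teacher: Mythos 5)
Your reduction to the single bound $\sup_{t}\|u(t)\|_{C^{2+\theta}(\overline\Omega)}<\infty$, the splitting $f=(f-R\gamma f)+R\gamma f$ with Corollary~\ref{co2.3} disposing of the interior part, and the frozen-datum decomposition with Lemma~\ref{le3.6} absorbing the factor $t^{-\frac{\alpha\theta}{2}}$ via (C2)--(C3) are all sound; the latter two steps essentially coincide with the paper's own argument (the paper freezes the full datum $f$ rather than only the boundary part, an immaterial difference). The genuine gap is exactly the estimate you single out as the core and then leave as a plan: the uniform $C^\theta$ bound on $A\int_0^t T(\sigma)[f_1(t-\sigma)-f_1(t)]\,d\sigma$. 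The route you sketch --- a direct spatial H\"older-seminorm estimate splitting the $\sigma$-integral at a scale tied to a power of $|x-y|$ --- fails at the borderline. Indeed, by Lemma~\ref{le3.5} the near part, measured in $C^2$, contributes $O(\delta^{\frac{\alpha\theta}{2}})$, while the far part, measured in $C^{2+\theta}$, has integrand of size $\sigma^{-1}$ even after exploiting the $\frac{\alpha\theta}{2}$-H\"older continuity of $\gamma f$, hence contributes $O\bigl(|x-y|^\theta \log(T/\delta)\bigr)$; the optimal choice $\delta\sim|x-y|^{2/\alpha}$ leaves a factor $\log(1/|x-y|)$, i.e.\ a logarithmic loss, so this argument as stated does not reach $C^\theta$. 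Your appeal to Lemmas~\ref{le3.4}, \ref{le3.7} and \ref{le3.8} to ``make the cancellation explicit'' is not substantiated: Lemma~\ref{le3.7} concerns the scalar kernel $\frac{1}{2\pi i}\int_{\Gamma}e^{\lambda t}(\lambda^\alpha-\xi)^{-1}\,d\lambda$, an object that never arises in a pointwise $x$-space splitting.

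The missing idea --- the paper's actual way around the logarithm --- is to trade the H\"older seminorm for the resolvent characterization of $C^\theta_0(\overline\Omega)$: by Proposition~\ref{pr2.1A}(III) and Proposition~\ref{pr2.4}(III), it suffices to prove $\|A(\xi-A)^{-1}Au_1(t)\|_{C(\overline\Omega)}\leq C\xi^{-\frac{\theta}{2}}$ uniformly in $t\in(0,T]$ and $\xi\geq 2R$, which is estimate (\ref{eq3.6}). The resolvent identity then splits $A(\xi-A)^{-1}AT(t)$ into $K_1(t,\xi)$, handled by Lemma~\ref{le3.4} together with the elementary Lemma~\ref{le3.8}, and $K_2(t,\xi)=\xi\,h(t,\xi)\,A(\xi-A)^{-1}$, where the kernel $h(t,\xi)$ of Lemma~\ref{le3.7} carries precisely the decay in $\xi$ that encodes the needed cancellation (note $h\equiv 0$ when $\alpha=1$). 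Without this step, or a genuinely worked-out substitute for it, your proposal does not establish $Au\in B([0,T];C^\theta(\overline\Omega))$, which is the entire content of the proposition beyond what Lemma~\ref{le3.3A} already gives.
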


\begin{proof} The fact that $u$ is a strict solution has been proved in Lemma \ref{le3.3A}. In order to show the remainng part of the assertion, it suffices to show that $u \in C([0, T]; C^2(\overline \Omega))$ and that $Au \in B([0, T]; C^\theta(\overline \Omega))$, in force of the inequality
$$
\|u\|_{C^{2+\theta}_0(\overline \Omega)} \leq C(\|u\|_{C^{2}(\overline \Omega)} + \|Au\|_{C^{\theta}(\overline \Omega)}), 
$$
which is valid for some $C$ positive independent of $u$, by Proposition \ref{pr2.4}. 

To this aim, we begin by observing that
$$
u(t) = \int_0^t T(t-s) [f(s) - f(t)] ds + T_1(t) f(t) := u_1(t) + u_2(t). 
$$
Then we have, by Lemma \ref{le3.6}, 
$$
\|u_2(t)\|_{C^2(\overline \Omega)} \leq C_1 t^{\frac{\alpha \theta}{2}} (\|f(t)\|_{C^{\theta}(\overline \Omega)} + t^{- \frac{\alpha \theta}{2}} \|\gamma f(t)\|_{C(\partial \Omega)}) \leq C_2 t^{\frac{\alpha \theta}{2}}, 
$$
so that $u_2 \in C([0, T]; C^2(\overline \Omega))$. Moreover, again by Lemma \ref{le3.6}, 
$$
\|Au_2(t)\|_{C^\theta(\overline \Omega)} \leq C_3 \|u_2(t)\|_{C^{2+\theta}(\overline \Omega)} \leq C_4 (\|f(t)\|_{C^{\theta}(\overline \Omega)} + t^{- \frac{\alpha \theta}{2}} \|\gamma f(t)\|_{C(\partial \Omega)}) \leq C_5. 
$$
So $u_2 \in B([0, T]; C^{2+\theta}(\overline \Omega))$. 

Now we consider $u_1$. By Lemma \ref{le3.5}, 
$$
\begin{array}{ll}
\|u_1(t)\|_{C^2(\overline \Omega)} &  \leq C(2) \int_0^t (t-s)^{\frac{\alpha \theta}{2} -1} (2 \|f\|_{B([0, T]; C^\theta(\overline \Omega))} + \|\gamma f\|_{C^{\frac{\alpha \theta}{2}}([0, T]; C(\overline \Omega)}) ds \\ \\
&  \leq C_6 t^{\frac{\alpha \theta}{2}} (\|f\|_{B([0, T]; C^\theta(\overline \Omega))} + \|\gamma f\|_{C^{\frac{\alpha \theta}{2}}([0, T]; C(\overline \Omega)}). 
\end{array}
$$
So $u_1 \in C([0, T]; C^2(\overline \Omega))$. It remains to estimate $\|Au_1(t)\|_{C^\theta(\overline \Omega)}$. By Proposition \ref{pr2.1A} (III) and Proposition \ref{pr2.4} (III), 
$$
C^{\theta}_0(\overline \Omega) = \{f \in C(\overline \Omega) : \sup_{\xi \geq 2R} \xi^{\frac{\theta}{2}} \|A (\xi - A)^{-1} f\|_{C(\overline \Omega)} < \infty\},
$$
with $R$ as in Remark \ref{re2.4}. Moreover, the norm
$$
f \to \max\{\|f\|_{C(\overline \Omega)}, \sup_{\xi \geq 2R} \xi^{\frac{\theta}{2}} \|A (\xi - A)^{-1} f\|_{C(\overline \Omega)}\}
$$
with $f \in C^\theta_0(\overline \Omega)$, is equivalent to $\|\cdot\|_{C^\theta_0(\overline \Omega)}$. So, in order to complete the proof, we can show that there exists $C$ positive, such that, for any $t$ in $(0, T]$, for any $\xi$ in 
$[2R, \infty)$, 
\begin{equation} \label{eq3.6}
\|A (\xi - A)^{-1} Au_1(t)\|_{C(\overline \Omega)} \leq C \xi^{-\frac{\theta}{2}}. 
\end{equation}
We put
\begin{equation}\label{eq3.7}
\Gamma:= \Gamma (\frac{\pi - \omega}{\alpha}, R^{1/\alpha}). 
\end{equation}
Ler $\xi \in [2R, \infty)$. Then we have, by the resolvent identity, 
$$
\begin{array}{c}
A(\xi - A)^{-1} A T(t) = \frac{1}{2\pi } \int_\Gamma e^{\lambda t} A(\xi - A)^{-1} A (\lambda^\alpha - A)^{-1} d\lambda = \\ \\
\frac{1}{2\pi i} \int_\Gamma e^{\lambda t} A(\xi - A)^{-1} [- 1 + \lambda^\alpha (\lambda^\alpha - A)^{-1}] d\lambda = \frac{1}{2\pi i} \int_\Gamma e^{\lambda t} \lambda^\alpha A(\xi - A)^{-1}  (\lambda^\alpha - A)^{-1} d\lambda \\ \\
= \frac{1}{2\pi i} \int_\Gamma e^{\lambda t} \frac{\lambda^\alpha}{\xi - \lambda^\alpha} A(\lambda^\alpha - A)^{-1} d\lambda + \frac{1}{2\pi i} \int_\Gamma e^{\lambda t} \frac{\lambda^\alpha}{\lambda^\alpha - \xi}  d\lambda \hspace{.05in} A(\xi - A)^{-1} \\ \\
:= K_1(t,\xi) + K_2(t,\xi), 
\end{array}
$$
so that 
$$
A(\xi -A)^{-1} A u_1(t) = \int_0^t K_1(t-s, \xi) [f(s) - f(t)] ds + \int_0^t K_2(t-s, \xi) [f(s) - f(t)] ds. 
$$
We have
$$
\begin{array}{c}
\|\int_0^t K_1(t-s, \xi) [f(s) - f(t)] ds\|_{C(\overline \Omega)} \\ \\
\leq \frac{1}{2\pi}  \int_0^t ( \int_\Gamma  e^{(t -s) Re(\lambda)} \frac{|\lambda|^\alpha}{|\xi - \lambda^\alpha|} \|A(\lambda^\alpha - A)^{-1} [f(s) - f(t)]\|_{C(\overline \Omega)} |d\lambda|) 
ds
\end{array}
$$
We indicate with $\Gamma_1$, $\Gamma_2$, $\Gamma$ piecewise regular paths describing, respectively, 
$$\{\lambda \in \C : |\lambda| = R^{1/\alpha}, |Arg(\lambda)| \leq \frac{\pi - \omega}{\alpha}\}, $$
$$\{\lambda \in \C : |\lambda| \geq R^{1/\alpha}, Arg(\lambda) = \frac{\pi - \omega}{\alpha}\}, $$
$$\{\lambda \in \C : |\lambda| \geq R^{1/\alpha}, Arg(\lambda) = -\frac{\pi - \omega}{\alpha}\}.  $$
Then, clearly, 
$$\int_0^t ( \int_{\Gamma_1}  e^{(t -s)Re(\lambda)} \frac{|\lambda|^\alpha}{|\xi - \lambda^\alpha|} \|A(\lambda^\alpha - A)^{-1} [f(s) - f(t)]\|_{C(\overline \Omega)} |d\lambda|) ds \leq C_1 \xi^{-1} \|f\|_{C([0, T]; C(\overline \Omega))}, $$
$$
\begin{array}{c}
\int_0^t ( \int_{\Gamma_2}  e^{(t-s) Re(\lambda)} \frac{|\lambda|^\alpha}{|\xi - \lambda^\alpha|} \|A(\lambda^\alpha - A)^{-1} [f(s) - f(t)]\|_{C(\overline \Omega)} |d\lambda|) ds \\ \\
+ \int_0^t ( \int_{\Gamma_3}  e^{(t-s) Re(\lambda)} \frac{|\lambda|^\alpha}{|\xi - \lambda^\alpha|} \|A(\lambda^\alpha - A)^{-1} [f(s) - f(t)]\|_{C(\overline \Omega)} |d\lambda|) ds \\ \\
:= I + J. 
\end{array} 
$$
By Lemma \ref{le3.4}, we have
$$
\begin{array}{c}
\|A(\lambda^\alpha - A)^{-1} [f(s) - f(t)]\|_{C(\overline \Omega)} \leq C_2 \|(\lambda^\alpha - A)^{-1} [f(s) - f(t)]\|_{C^2(\overline \Omega)} \\  \\
\leq C_3 (|\lambda|^{-\frac{\alpha \theta}{2}} \|f\|_{B([0, T]; C^\theta(\overline \Omega))} + (t-s)^{\frac{\alpha \theta}{2}} \|\gamma f\|_{C^{\frac{\alpha \theta}{2}}([0, T]; C(\partial \Omega))}), 
\end{array}
$$
so that 
$$
\begin{array}{c}
I + J \leq C_4 (\int_{\R^+ \times \R^+} e^{- |\cos(\frac{\pi - \omega}{\alpha})|t \tau} \frac{\tau^{\alpha - \frac{\alpha \theta}{2}}}{\tau^\alpha + \xi} dt d\tau \|f\|_{B([0, T]; C^\theta(\overline \Omega))} \\ \\
+ \int_{\R^+ \times \R^+} e^{- |\cos(\frac{\pi - \omega}{\alpha})|t \tau} \frac{t^\frac{\alpha \theta}{2} \tau^\alpha }{\tau^\alpha + \xi} dt d\tau  \|\gamma f\|_{C^{\frac{\alpha \theta}{2}}([0, T]; C(\partial \Omega))}) \\ \\
= C_5 \xi^{-\frac{\theta}{2}}(\|f\|_{B([0, T]; C^\theta(\overline \Omega))} + \|\gamma f\|_{C^{\frac{\alpha \theta}{2}}([0, T]; C(\partial \Omega))}),
\end{array}
$$
in force of Lemma  \ref{le3.8}. 

Finally, we observe that 
$$
\frac{1}{2\pi i} \int_\Gamma e^{\lambda t} \frac{\lambda^\alpha}{\lambda^\alpha - \xi}  d\lambda = \xi h(t,\xi), 
$$
so that 
$$
\int_0^t K_2(t-s, \xi) [f(s) - f(t)] ds = \int_0^t h(t-s,\xi) \xi A(\xi -A)^{-1} [f(s) - f(t)] ds.
$$
So, by Lemmata \ref{le3.4}, \ref{le3.7}, \ref{le3.8},  we have
$$
\begin{array}{c}
\|\int_0^t K_2(t-s, \xi) [f(s) - f(t)] ds\|_{C(\overline \Omega)} \\ \\
 \leq C_6 \xi \int_0^t (\int_{\R^+} e^{-(t-s) \tau} \frac{\tau^\alpha}{\tau^{2\alpha} + \xi^2} d\tau) \|A(\xi -A)^{-1} [f(s) - f(t)]\|_{C(\overline \Omega)} ds \\ \\
 \leq C_7 \xi (\int_{\R^+ \times \R^+} e^{-t \tau} \frac{\tau^\alpha}{\tau^{2\alpha} + \xi^2}  dt d\tau \xi^{-\theta/2} \|f\|_{B([0, T]; C^\theta(\overline \Omega))} \\ \\
  + \int_{\R^+ \times \R^+} e^{-t \tau} \frac{t^{\frac{\alpha \theta}{2}} \tau^\alpha}{\tau^{2\alpha} + \xi^2} dt d\tau \|\gamma f\|_{C^{\frac{\alpha \theta}{2}}([0, T]; C^\theta(\overline \Omega))}) \\ \\
  = C_8 \xi^{-\theta/2}(\|f\|_{B([0, T]; C^\theta(\overline \Omega))} + \|\gamma f\|_{C^{\frac{\alpha \theta}{2}}([0, T]; C^\theta(\overline \Omega))}). 
\end{array}
$$ 
So (\ref{eq3.6}) holds and the assertion is completely proved. 
\end{proof}

Now we consider the case $g \equiv 0$. We begin with the following

\begin{lemma}\label{le3.9}
Suppose that (A1)-(A4) hold. Moreover, 

(I) $f \in C([0, T]; C(\overline \Omega)) \cap B([0, T]; C^\theta(\overline \Omega))$; 
 
 (II) $u_0 \in C^{2+\theta}_0(\overline \Omega)$; 
 
 (III) $\gamma f  \in C^{\frac{\alpha \theta}{2}} ([0, T]; C(\partial \Omega))$; 
 
 (IV) $\gamma[Au_0 + f(0))= 0$. 
 
 \medskip
 If $t \in (0, T]$, we set
 $$
 u(t) := u_0 + \int_0^t T(t-s) [f(s) + Au_0] ds,
 $$
 with $T(t)$ as in (\ref{eq3.4}).  Then $u$ satisfies (B1)-(B2) and is a solution to (\ref{eq1.1}), with $g \equiv 0$ and, in case $\alpha \in (1, 2)$, $u_1 = 0$. 
\end{lemma}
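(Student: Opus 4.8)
The plan is to reduce the problem to the zero-initial-data case already settled in Proposition \ref{pr3.1}. Setting $\tilde f(t) := f(t) + Au_0$ and $w(t) := u(t) - u_0 = \int_0^t T(t-s)\tilde f(s)\,ds$, I would first check that $\tilde f$ satisfies (C1)-(C3). Indeed $u_0 \in C^{2+\theta}_0(\overline\Omega) \subseteq D(A)$, so $Au_0 = A(\cdot,D_x)u_0 \in C^\theta(\overline\Omega)$ is a fixed (time-independent) element; hence $\tilde f$ inherits (C1) from hypothesis (I), while $\gamma\tilde f = \gamma f + \gamma(Au_0)$ differs from $\gamma f$ by a constant and therefore lies in $C^{\frac{\alpha\theta}{2}}([0,T];C(\partial\Omega))$ by (III), which is (C2). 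Condition (C3) reads $\gamma[\tilde f(0)] = \gamma[Au_0 + f(0)] = 0$, which is precisely hypothesis (IV). Proposition \ref{pr3.1} then yields that $w$ is a strict solution of $B^\alpha_{C(\overline\Omega)} w = Aw + \tilde f$ with $B^\alpha_{C(\overline\Omega)} w \in B([0,T];C^\theta(\overline\Omega))$ and $w \in C([0,T];C^2(\overline\Omega)) \cap B([0,T];C^{2+\theta}(\overline\Omega))$ (the last continuity being established inside the proof of Proposition \ref{pr3.1}, since a strict solution lies only in $C([0,T];D(A))$, which does not by itself embed into $C^2$).

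Next I would transfer these properties from $w$ to $u = u_0 + w$. Since $w \in D(B^\alpha_{C(\overline\Omega)})$, Lemma \ref{le1.3}(III) gives $D_t^k w(0) = 0$ for every $k \in \N_0$ with $k < \alpha$, while the constant function $u_0$ has vanishing Caputo derivative because its Taylor polynomial $\sum_{k<\alpha}\frac{t^k}{k!}u_0^{(k)}(0)$ equals $u_0$ itself. Writing out Definition \ref{de2.2} for $u = u_0 + w$, the subtracted polynomial is exactly $u_0$, so $u - \sum_{k<\alpha}\frac{t^k}{k!}D_t^k u(0) = w \in D(B^\alpha_{C(\overline\Omega)})$ and $\mbD^\alpha_{C(\overline\Omega)} u = B^\alpha_{C(\overline\Omega)} w$. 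Consequently $\mbD^\alpha_{C(\overline\Omega)} u = Aw + \tilde f = A(u_0 + w) + f = Au + f$, which is the differential equation in (\ref{eq1.1}).

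Then I would verify the side conditions and the regularity (B1)-(B2). The boundary condition $g \equiv 0$ follows since $\gamma u_0 = 0$ (as $u_0 \in C^{2+\theta}_0$) and $\gamma w(t) = 0$ (as $w(t) \in D(A) \subseteq W^{1,p}_0$); the initial conditions hold because $u(0) = u_0 + w(0) = u_0$ and, when $\alpha \in (1,2)$, $D_t u(0) = D_t w(0) = 0 = u_1$. For (B1), I combine $B^\alpha_{C(\overline\Omega)} w \in B([0,T];C^\theta(\overline\Omega))$ from Proposition \ref{pr3.1} with the identity $B^\alpha_{C(\overline\Omega)} w = Aw + \tilde f$: since $w \in C([0,T];D(A))$ forces $Aw \in C([0,T];C(\overline\Omega))$ and $\tilde f \in C([0,T];C(\overline\Omega))$, the Caputo derivative lies in $C([0,T];C(\overline\Omega)) \cap B([0,T];C^\theta(\overline\Omega))$. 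Finally (B2) is immediate from $u = u_0 + w$, the trivial membership of the constant $u_0$ in both spaces, and the regularity of $w$ recorded above.

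The argument is essentially a change of unknown, so no single step is a serious obstacle; the analytic content was already absorbed into Proposition \ref{pr3.1}. The only points demanding care are the bookkeeping of the Caputo derivative under the splitting $u = u_0 + w$ — one must confirm that subtracting the constant $u_0$ cancels precisely the Taylor polynomial appearing in Definition \ref{de2.2}, so that $\mbD^\alpha_{C(\overline\Omega)} u = B^\alpha_{C(\overline\Omega)} w$ — and the recognition that the compatibility hypothesis (IV) is exactly the condition (C3) needed to invoke Proposition \ref{pr3.1}.
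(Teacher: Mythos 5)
Your proposal is correct and follows essentially the same route as the paper: the paper likewise writes $u = u_0 + v$ with $v(t) = \int_0^t T(t-s)[f(s)+Au_0]\,ds$, invokes Proposition \ref{pr3.1} for the shifted datum $f + Au_0$ (your explicit verification of (C1)--(C3), with (IV) giving (C3), is exactly what the paper leaves implicit), and then transfers the equation via $\mbD^\alpha_{C(\overline\Omega)}u = B^\alpha_{C(\overline\Omega)}v = Av + Au_0 + f = Au + f$, using Lemma \ref{le1.3} for $D_t u(0)=0$ when $\alpha\in(1,2)$. Your extra care with the Caputo bookkeeping and the boundary/continuity checks only spells out details the paper dismisses as ``clear.''
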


\begin{proof} We set, for $t \in (0, T]$, 
$$
v(t):= \int_0^t T(t-s) [f(s) + Au_0] ds. 
$$
Then, by Proposition \ref{pr3.1}, $v$ is a strict solution to 
$$
 B^\alpha_{C(\overline \Omega)} v(t) = Av(t) + f(t) + Au_0, \quad t \in [0, T], 
 $$
 and, moreover, $B^\alpha_{C(\overline \Omega)} v \in B([0, T]; C^\theta (\overline \Omega))$ and $v \in B([0, T]; C^{2+\theta}(\overline \Omega))$. We deduce that $u(0) = u_0$, in case $\alpha \in (0, 1)$, 
 $D_t u(0) = D_tv(0) = 0$ (by Lemma \ref{le1.3}), $\mbD^\alpha_{C(\overline \Omega)} u$ is defined and, for $t \in [0, T]$, 
 $$
 \mbD^\alpha_{C(\overline \Omega)} u(t) = B_{C(\overline \Omega)}^\alpha v(t) = Av(t) + Au_0 + f(t) = Au(t) + f(t). 
 $$
 The fact that $u$ satisfies (B1)-(B2) is clear. 
\end{proof}

\begin{lemma} \label{le3.10}
Suppose that (A1)-(A4) hold, with $\alpha \in (1, 2)$. Let $u_1 \in C^{\theta + 2(1 - \frac{1}{\alpha})}_0(\overline \Omega)$. We adopt again the convention (\ref{eq3.7}) and set, for $t$ in $(0, T]$
$$
u(t):= \frac{1}{2\pi i} \int_{\Gamma} e^{\lambda t} \lambda^{\alpha -2} (\lambda^\alpha - A)^{-1} u_1 d\lambda.
$$
Then $u$ satisfies (B1)-(B2) and is a solution to (\ref{eq1.1}), with $u_0 = 0$, $f \equiv 0$, $g \equiv 0$. 
\end{lemma}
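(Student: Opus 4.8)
The plan is to extract every required property directly from the contour representation of $u$, using Proposition \ref{pr7A} for the Caputo derivative and Lemma \ref{le3.4} for the spatial regularity, after first splitting off the ``polynomial'' part of the integrand. Write $(\lambda^\alpha - A)^{-1} u_1 = \lambda^{-\alpha} u_1 + \lambda^{-\alpha} A(\lambda^\alpha - A)^{-1} u_1$, so that the integrand factor becomes $\lambda^{\alpha-2}(\lambda^\alpha - A)^{-1}u_1 = \lambda^{-2} u_1 + G(\lambda)$, where $G(\lambda) := \lambda^{-2} A(\lambda^\alpha - A)^{-1} u_1$. The term $\frac{1}{2\pi i}\int_\Gamma e^{\lambda t}\lambda^{-2} u_1\, d\lambda$ equals $t\,u_1$: closing $\Gamma$ to the left (legitimate since $\frac{\pi-\omega}{\alpha} > \frac{\pi}{2}$, so $\operatorname{Re}\lambda < 0$ on the added arc) and computing the residue of $e^{\lambda t}\lambda^{-2}$ at the double pole $\lambda = 0$ gives $t$. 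Thus $u(t) = t\,u_1 + \tilde u(t)$ with $\tilde u(t) = \frac{1}{2\pi i}\int_\Gamma e^{\lambda t} G(\lambda)\, d\lambda$.

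The main point is the decay estimate for $G$. By Proposition \ref{pr2.4}(III), the hypothesis $u_1 \in C^{\theta + 2(1-1/\alpha)}_0(\overline\Omega)$ means $u_1 \in (C(\overline\Omega), D(A))_{\frac\theta2 + 1 - \frac1\alpha,\infty}$. For $\lambda \in \Gamma$ one has $\mu := \lambda^\alpha$ in the resolvent sector of Remark \ref{re2.4}, so the standard interpolation bound $\|A(\mu - A)^{-1} u_1\| \le C |\mu|^{-\frac\theta2 - 1 + \frac1\alpha}\|u_1\|$ holds; with $\mu = \lambda^\alpha$ this reads $\|A(\lambda^\alpha - A)^{-1} u_1\| \le C|\lambda|^{1 - \alpha - \frac{\alpha\theta}2}$, whence $\|G(\lambda)\| \le C|\lambda|^{-1-\alpha-\frac{\alpha\theta}2}$. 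This decays strictly faster than $|\lambda|^{-1-\alpha}$, so $G$ satisfies the hypotheses of Proposition \ref{pr7A} with exponent $\alpha$ and limit $F_0 = 0$. Hence $\tilde u \in D(B^\alpha_{C(\overline\Omega)})$, $\tilde u(0) = 0$, and $B^\alpha_{C(\overline\Omega)}\tilde u(t) = \frac{1}{2\pi i}\int_\Gamma e^{\lambda t}\lambda^{\alpha-2}A(\lambda^\alpha - A)^{-1}u_1\, d\lambda$ with $B^\alpha_{C(\overline\Omega)}\tilde u(0) = 0$. I regard this estimate as the heart of the argument: the precise exponent $\theta + 2(1-1/\alpha)$ is exactly what makes the decay of $G$ faster than $|\lambda|^{-1-\alpha}$ and forces $F_0 = 0$, which is what makes the equation consistent at $t = 0$ (where $Au(0) = 0$).

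From here everything follows. Since $B^\alpha_{C(\overline\Omega)}\tilde u \in C([0,T]; C(\overline\Omega))$, Lemma \ref{le1.3} (applied with $X_1 = X = C(\overline\Omega)$) gives $\tilde u \in C^\alpha([0,T]; C(\overline\Omega))$ and $\tilde u(0) = \tilde u'(0) = 0$. Therefore $u = t\,u_1 + \tilde u \in C^1([0,T]; C(\overline\Omega))$ with $u(0) = 0 = u_0$ and $u'(0) = u_1$, and $u - u(0) - t\,u'(0) = \tilde u \in D(B^\alpha_{C(\overline\Omega)})$, so by Definition \ref{de2.2} the Caputo derivative exists and $\mbD^\alpha_{C(\overline\Omega)}u = B^\alpha_{C(\overline\Omega)}\tilde u$. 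Since each $(\lambda^\alpha - A)^{-1}u_1 \in D(A)$ has zero trace and $A$ is closed, pulling $A$ through the integral (which converges in graph norm) shows $u(t) \in D(A)$, $\gamma u(t) = 0$ (so $g \equiv 0$), and $Au(t) = \frac{1}{2\pi i}\int_\Gamma e^{\lambda t}\lambda^{\alpha-2}A(\lambda^\alpha - A)^{-1}u_1\, d\lambda = \mbD^\alpha_{C(\overline\Omega)}u(t)$; that is, the equation holds with $f \equiv 0$ and $u_0 = 0$.

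Finally, for (B1)--(B2) I would estimate the spatial norms from the representation of $u$ itself. Applying Lemma \ref{le3.4} with $\xi = 2+\theta$ and $\gamma u_1 = 0$ gives $\|(\lambda^\alpha - A)^{-1}u_1\|_{C^{2+\theta}(\overline\Omega)} \le C\|u_1\|_{C^\theta(\overline\Omega)}$, so $\|u(t)\|_{C^{2+\theta}(\overline\Omega)} \le C\|u_1\|_{C^\theta(\overline\Omega)}\int_\Gamma |\lambda|^{\alpha-2}|d\lambda| < \infty$, the integral converging because $\alpha - 2 < -1$; hence $u \in B([0,T]; C^{2+\theta}(\overline\Omega))$, and $\mbD^\alpha_{C(\overline\Omega)}u = Au \in B([0,T]; C^\theta(\overline\Omega))$ since $A$ maps $C^{2+\theta}(\overline\Omega)$ boundedly into $C^\theta(\overline\Omega)$. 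Continuity $u \in C([0,T]; C^2(\overline\Omega))$ (including at $t = 0$, where $u(0) = 0$) then follows by interpolating via Lemma \ref{le2.1}(I) between the $C(\overline\Omega)$-continuity already known from $u \in C^1([0,T]; C(\overline\Omega))$ and the uniform $C^{2+\theta}$ bound. Together with $\mbD^\alpha_{C(\overline\Omega)}u \in C([0,T]; C(\overline\Omega))$ from Proposition \ref{pr7A}, this yields (B1)--(B2), completing the proof.
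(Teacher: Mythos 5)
Your proof tracks the paper's own argument almost line for line up through the verification of the equation: the splitting $u(t)=t\,u_1+\tilde u(t)$ with $G(\lambda)=\lambda^{-2}A(\lambda^\alpha-A)^{-1}u_1$, the estimate $\|A(\lambda^\alpha-A)^{-1}u_1\|_{C(\overline\Omega)}\le C|\lambda|^{1-\alpha-\frac{\alpha\theta}{2}}$ obtained from Proposition \ref{pr2.4}(III) together with Proposition \ref{pr2.1A}(III), and the application of Proposition \ref{pr7A} with $F_0=0$ are exactly the paper's steps (the paper gets $u(0)=0$, $D_tu(0)=u_1$ by applying Proposition \ref{pr7A} with exponent $1$ to $F(\lambda)=\lambda^{\alpha-2}(\lambda^\alpha-A)^{-1}u_1$ rather than via Lemma \ref{le1.3}; that difference is immaterial). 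All of this part is correct.

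The final paragraph, however --- the proof of (B1)--(B2), which is the real content of the lemma --- contains a genuine error. You claim $\int_\Gamma|\lambda|^{\alpha-2}\,|d\lambda|<\infty$ ``because $\alpha-2<-1$''; but here $\alpha\in(1,2)$, so $\alpha-2\in(-1,0)$ and that integral diverges at infinity. Keeping the factor $e^{\lambda t}$ and rescaling $\lambda\mapsto\lambda/t$ shows that your estimate, which uses only $u_1\in C^\theta(\overline\Omega)$ through Lemma \ref{le3.4}, yields at best $\|u(t)\|_{C^{2+\theta}(\overline\Omega)}\le C\,t^{1-\alpha}\|u_1\|_{C^\theta(\overline\Omega)}$, which blows up as $t\to 0$; so boundedness in $C^{2+\theta}(\overline\Omega)$ does not follow this way, and with it your deduction that $Au\in B([0,T];C^\theta(\overline\Omega))$ and the continuity in $C^2(\overline\Omega)$ also collapse. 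The point you are missing is that the full hypothesis $u_1\in C_0^{\theta+2(1-\frac{1}{\alpha})}(\overline\Omega)$ must be used a \emph{second} time, precisely to compensate this $t^{1-\alpha}$ singularity: the paper introduces the part $A_\theta$ of $A$ in $C_0^\theta(\overline\Omega)$, proves by reiteration that $(C_0^\theta(\overline\Omega),D(A_\theta))_{1-\frac{1}{\alpha},\infty}=C_0^{\theta+2(1-\frac{1}{\alpha})}(\overline\Omega)$, and then deduces from Proposition \ref{pr2.1A}(III), applied to $A_\theta$, the bound $\|\lambda^{\alpha-2}A(\lambda^\alpha-A)^{-1}u_1\|_{C^\theta(\overline\Omega)}\le C|\lambda|^{-1}\|u_1\|_{C_0^{\theta+2(1-1/\alpha)}(\overline\Omega)}$ on $\Gamma$. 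This $|\lambda|^{-1}$ decay is exactly the borderline rate that Remark \ref{re2.2A} converts into a bound uniform in $t$, giving $Au\in B([0,T];C_0^\theta(\overline\Omega))$; the bound $u\in B([0,T];C^{2+\theta}(\overline\Omega))$ is then recovered from the Schauder-type estimate of Proposition \ref{pr2.4}, not from a direct estimate of the contour integral in $C^{2+\theta}(\overline\Omega)$. Your concluding interpolation step (continuity in $C^2(\overline\Omega)$ from $C^1([0,T];C(\overline\Omega))$ plus the uniform $C^{2+\theta}$ bound, via Lemma \ref{le2.1}(I)) is fine once that bound is actually established.
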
 

\begin{proof} We put, for $|\lambda| \geq R^{1/\alpha}$, $|Arg(\lambda)| \leq \frac{\pi - \omega}{\alpha}$, 
$$
F(\lambda):= \lambda^{\alpha -2} (\lambda^\alpha - A)^{-1} u_1.  
$$
As $u_1$ belongs to the closure of $D(A)$ in $C(\overline \Omega)$ (because $\gamma u_1 = 0$), we have
$$
\lim_{|\lambda| \to \infty} \lambda^2 F(\lambda) = \lim_{|\lambda| \to \infty} \lambda^{\alpha} (\lambda^\alpha - A)^{-1} u_1 = u_1. 
$$
We deduce from Proposition \ref{pr7A} that $u$ belongs to $D(B_{C(\overline \Omega)})$ and $B_{C(\overline \Omega)}u(0) = u_1$,  so that $u(0) = 0$, $D_tu(0) = u_1$. If $t \in (0, T]$, we have
$$
u(t) - tu_1 = \frac{1}{2\pi i} \int_{\Gamma} e^{\lambda t} [\lambda^{\alpha -2} (\lambda^\alpha - A)^{-1} u_1 - \lambda^{-2} u_1] d\lambda = \frac{1}{2\pi i} \int_{\Gamma} e^{\lambda t} \lambda^{ -2} A(\lambda^\alpha - A)^{-1} u_1 d\lambda. 
$$
By Proposition \ref{pr2.4} (III), we have 
$$
C^{\theta + 2(1 - \frac{1}{\alpha})}_0(\overline \Omega) = (C(\overline \Omega), D(A))_{\frac{\theta}{2} + 1 - \frac{1}{\alpha},\infty}, 
$$
so that, by Proposition \ref{pr2.1A} (III), we have
$$
\|A(\lambda^\alpha - A)^{-1} u_1\|_{C(\overline \Omega)} \leq C |\lambda|^{1 - \alpha (\frac{\theta}{2} + 1)}. 
$$
We deduce that 
$$
\lim_{|\lambda| \to \infty} |\lambda|^{1+\alpha} \|\lambda^{ -2} A(\lambda^\alpha - A)^{-1} u_1\|_{C(\overline \Omega)} = 0. 
$$
So by Proposition \ref{pr7A} $\mbD^\alpha_{C(\overline \Omega)}u$ is defined. Moreover,  $\forall t \in [0, T]$
$$
\mbD^\alpha_{C(\overline \Omega)}u(t) = \frac{1}{2\pi i} \int_{\Gamma} e^{\lambda t} \lambda^{\alpha -2} A(\lambda^\alpha - A)^{-1} u_1 d\lambda = Au(t). 
$$
If remains to show that $Au$ is bounded with values in $C^\theta(\overline \Omega)$. To this aim, we introduce the operator $A_\theta$, defined as follows: 
$$
\left\{\begin{array}{l}
D(A_\theta): = \{u \in C^{2+\theta}_0(\overline \Omega) : Au \in C^{\theta}_0(\overline \Omega)\}, \\ \\
A_\theta u = Au, \quad u \in D(A_\theta). 
\end{array}
\right. 
$$
As 
$$C_0^\theta (\overline \Omega) = (C(\overline \Omega), D(A))_{\theta/2,\infty} = (C(\overline \Omega), D(A^2))_{\theta/4,\infty}, $$
$A_\theta$, as unbounded operator in $C_0^\theta (\overline \Omega)$, can be taken as operator $B$ in Proposition \ref{pr2.1A}. We have that  
$$D(A_\theta) = (D(A), D(A^2))_{\theta/2,\infty} = (C(\overline \Omega), D(A^2))_{\frac{2+\theta}{4}, \infty},$$ 
with equivalent norms. So, by Proposition \ref{pr2.1A} and the reiteration theorem, we deduce
$$
(C_0^\theta(\overline \Omega), D(A_\theta))_{1 - \frac{1}{\alpha},\infty} = (C(\overline \Omega), D(A^2))_{\frac{\theta}{4} + \frac{1}{2}(1-\frac{1}{\alpha}),\infty} = (C(\overline \Omega), D(A))_{\frac{\theta}{2} + 1-\frac{1}{\alpha},\infty} = C_0^{\theta + 2(1-\frac{1}{\alpha})}(\overline \Omega).
$$
We deduce that, if $\lambda \in \Gamma$, 
$$
\|\lambda^{\alpha -2} A(\lambda^\alpha - A)^{-1} u_1\|_{C^\theta(\overline \Omega)} = \|\lambda^{\alpha -2} A(\lambda^\alpha - A)^{-1} u_1\|_{C_0^{\theta + 2(1-\frac{1}{\alpha})}  (\overline \Omega)} \leq 
C |\lambda|^{-1} \|u_1\|_{C_0^{\theta + 2(1-\frac{1}{\alpha})}(\overline \Omega)}, 
$$
so that, by Remark \ref{re2.2A}, $Au \in B([0, T]; C^\theta_0(\overline \Omega))$ and $u \in B([0, T]; C^{2+\theta}(\overline \Omega))$. 
\end{proof}

\begin{corollary}\label{co3.1}
Suppose that (A1)-(A4) are fulfilled. Consider system (\ref{eq1.1}) in case $g \equiv 0$. Then the following conditions are necessary and sufficient, in order that there exists a unique solution $u$
satisfying (B1)-(B2): 
 
 (I) $f \in C([0, T]; C(\overline \Omega)) \cap B([0, T]; C^\theta(\overline \Omega))$.
 
 (II) $u_0 \in C_0^{2+\theta}(\overline \Omega)$ and, in case $\alpha \in (1, 2)$, $u_1 \in C_0^{\theta + 2(1 - \frac{1}{\alpha})}(\overline \Omega)$. 
 
 (III) $\gamma f \in C^{\frac{\alpha \theta}{2}} ([0, T]; C(\partial \Omega))$. 
 
 (IV) $\gamma[A(\cdot,D_x) u_0 + f(0)] = 0$. 

\end{corollary}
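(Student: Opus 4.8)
The plan is to treat necessity, sufficiency and uniqueness separately, since the analytic substance has already been isolated in Lemma \ref{le3.2}, Lemma \ref{le3.9} and Lemma \ref{le3.10}, so the corollary is essentially a synthesis of these.

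For \emph{necessity}, I would specialize the necessity statement of Lemma \ref{le3.2} to $g \equiv 0$. If a solution $u$ satisfying (B1)-(B2) exists, then conditions (I)-(VI) of Theorem \ref{th1.1} hold. Condition (I) of Theorem \ref{th1.1} is exactly (I) here. Since $g \equiv 0$ forces $\mbD^\alpha_{C(\partial \Omega)} g \equiv 0$, condition (V) of Theorem \ref{th1.1} reduces to $\gamma f \in C^{\frac{\alpha \theta}{2}}([0, T]; C(\partial \Omega))$, which is (III), and condition (VI) reduces to $\gamma[A(\cdot, D_x) u_0 + f(0)] = 0$, which is (IV). Finally, condition (II) of Theorem \ref{th1.1} gives $u_0 \in C^{2+\theta}(\overline \Omega)$, and $u_1 \in C^{\theta + 2(1 - \frac{1}{\alpha})}(\overline \Omega)$ if $\alpha \in (1,2)$, while condition (IV) of Theorem \ref{th1.1} with $g \equiv 0$ gives $\gamma u_0 = 0$ (and $\gamma u_1 = 0$ when $\alpha \in (1,2)$); together these yield membership in $C_0^{2+\theta}(\overline \Omega)$ and $C_0^{\theta + 2(1 - \frac{1}{\alpha})}(\overline \Omega)$, as asserted in (II).

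For \emph{sufficiency}, I would build the solution by superposition. Assuming (I)-(IV), hypotheses (I)-(IV) of Lemma \ref{le3.9} are satisfied (note that (IV) here is precisely the trace compatibility $\gamma[A u_0 + f(0)] = 0$ required by that lemma), so $u^{(1)}(t) := u_0 + \int_0^t T(t-s)[f(s) + A u_0]\,ds$ satisfies (B1)-(B2) and solves (\ref{eq1.1}) with $g \equiv 0$, forcing term $f$, initial value $u_0$ and, when $\alpha \in (1,2)$, $u_1 = 0$. If $\alpha \in (0,1]$ we take $u := u^{(1)}$. If $\alpha \in (1,2)$, the hypothesis $u_1 \in C_0^{\theta + 2(1 - \frac{1}{\alpha})}(\overline \Omega)$ lets me invoke Lemma \ref{le3.10} to obtain $u^{(2)}$ satisfying (B1)-(B2) and solving (\ref{eq1.1}) with $u_0 = 0$, $f \equiv 0$, $g \equiv 0$ and $D_t u^{(2)}(0) = u_1$. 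Since the problem is linear and the classes in (B1)-(B2) are vector spaces, $u := u^{(1)} + u^{(2)}$ is a solution carrying all the prescribed data.

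For \emph{uniqueness}, given two solutions with the same data, their difference $w$ solves the homogeneous problem with $f \equiv 0$, $g \equiv 0$ and vanishing initial values. Because $w^{(k)}(0) = 0$ for every $k \in \N_0$ with $k < \alpha$, Definition \ref{de2.2} gives $w \in D(B^\alpha_{C(\overline \Omega)})$ with $B^\alpha_{C(\overline \Omega)} w = \mbD^\alpha_{C(\overline \Omega)} w$; moreover $w \in C([0,T]; C^2(\overline \Omega))$ with $\gamma w \equiv 0$ forces $w(t) \in D(A)$ for every $t$ and $w \in C([0,T]; D(A))$, since $A(\cdot,D_x)$ maps $C^2(\overline \Omega)$ boundedly into $C(\overline \Omega)$. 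Hence $w$ is a strict solution of (\ref{eq3.2}) with $f = 0$, and Corollary \ref{co2.1} (I) forces $w \equiv 0$. I expect no deep analytic obstacle here; the only points needing care are the bookkeeping that matches (IV) to the hypothesis of Lemma \ref{le3.9}, and the verification in the uniqueness step that a function merely satisfying (B1)-(B2) qualifies as a strict solution in the sense of Definition \ref{de2.3}, so that Corollary \ref{co2.1} is applicable.
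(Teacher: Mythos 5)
Your proposal is correct and follows essentially the same route as the paper's proof: necessity by specializing Lemma \ref{le3.2} to $g \equiv 0$, existence by summing the solutions furnished by Lemma \ref{le3.9} (with $u_1 = 0$) and Lemma \ref{le3.10} (with $u_0 = 0$, $f \equiv 0$), and uniqueness by reducing the difference of two solutions to a strict solution of the homogeneous abstract equation, which the paper handles by citing Proposition \ref{pr2.5} while you invoke its specialization Corollary \ref{co2.1}(I). Your extra verifications (that the difference of two (B1)--(B2) solutions with equal data lies in $D(B^\alpha_{C(\overline \Omega)})$ and in $C([0,T]; D(A))$, hence is a strict solution in the sense of Definition \ref{de2.3}) merely make explicit what the paper leaves implicit.
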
 

\begin{proof} The necessity of conditions (I)-(IV) follows from Lemma \ref{le3.2}. The uniqueness of a solution follows from Proposition \ref{pr2.5}. Concerning the existence, it suffices to take the sum of the solution of  (\ref{eq1.1}) with $g \equiv 0$, $u_1 = 0$ (in case $\alpha > 1$) with the solution of  (\ref{eq1.1})  with $u_0 = 0$, $f \equiv 0$, $g \equiv 0$, the existence of which follows from Lemma \ref{le3.9} and \ref{le3.10}. 

\end{proof}

{\bf Proof of Theorem \ref{th1.1}.} The uniqueness of a solution follows from Proposition \ref{pr2.5}. 

We prove the existence. Let $R$ be the operator introduced in Lemma \ref{le2.1} (IV). We set
\begin{equation}\label{eq3.8A}
v(t):= Rg(t), \quad t \in [0, T]. 
\end{equation}
Then $v \in C([0, T]; C^2(\overline \Omega)) \cap B([0, T]; C^{2+\theta}(\overline \Omega))$. We set $h:= \mathbb{D}^\alpha_{C(\partial \Omega)}g$. Then, by (IV), if we put $u_1 = 0$ in case $\alpha \in (0, 1]$, we have
$$
g(t) = \gamma u_0 + t \gamma u_1 + \frac{1}{\Gamma (\alpha)} \int_0^t (t-s)^{\alpha -1} h(s) ds, \quad t \in [0, T]. 
$$
We deduce 
$$
v(t) = R\gamma u_0 + t R\gamma u_1 + \frac{1}{\Gamma (\alpha)} \int_0^t (t-s)^{\alpha -1} Rh(s) ds, \quad t \in [0, T]. 
$$
so that $\mathbb{D}^\alpha_{C(\overline \Omega)}v$ exists and coincides with $R\mathbb{D}^\alpha_{C(\partial \Omega)}g$, implying that $\mathbb{D}^\alpha_{C(\overline \Omega)}v$ belongs to $C([0, T]; C(\overline \Omega))
 \cap B([0, T]; C^\theta(\overline \Omega))$. From this we deduce (applying Lemma \ref{le1.4} (III)) that $v \in C^{\frac{\alpha \theta}{2}}([0, T]; C^2(\overline \Omega))$. 
 
 Now we take, as new unknown, $w:= u - v$. $w$ should solve the system
 
  \begin{equation} \label{eq3.8}
\left\{
\begin{array}{l}
\mathbb{D}^\alpha_{C(\overline \Omega)} w(t,x) =  A(x,D_x) w(t,x) + f(t,x) - \mathbb{D}^\alpha_{C(\overline \Omega)} v(t,x) + A(x,D_x) v(t,x) , \quad t \in [0, T], x \in \Omega, \\ \\
w(t,x') = 0, \quad (t,x') \in [0, T] \times \partial \Omega, \\ \\
D_t^k w(0,x) = u_k(x) - R(\gamma u_k)(x), \quad x \in \overline \Omega, k \in \N_0, k < \alpha. 
\end{array}
\right. 
\end{equation}
It is easily seen that Corollary \ref{co3.1} is applicable to system (\ref{eq3.8}). So there exist a solution $w$ satisfying (B1)-(B2). If we put $u:= v + w$, we obtain a solution of (\ref{eq1.1}), satisfying (B1)-(B2).

\section{Proof of Theorem \ref{th1.2}} \label{se4}

\setcounter{equation}{0}

We begin by showing that conditions (I)-(V) in Theorem \ref{th1.2} are necessary to get the conclusion. If $u$ has the required regularity, it satisfies also (B1)-(B2). So conditions (I)-(VI) in the statement of Theorem \ref{th1.1} are all necessary. It is clear that, necessarily, $f$ should belong to $C^{\frac{\alpha \theta}{2}, \theta}([0, T] \times \overline \Omega)$. Moreover, as 
$$
\mathbb{D}^\alpha_{C(\partial \Omega)}g = \gamma \mathbb{D}^\alpha_{C(\overline \Omega)}u, 
$$
necessarily, $\mathbb{D}^\alpha_{C(\partial \Omega)}g$ belongs to $C^{\frac{\alpha \theta}{2}, \theta}([0, T] \times \partial \Omega)$. 

Now we show that these conditions are also sufficient. Following the argument in the proof of Theorem \ref{th1.1}, we define $v$ as in (\ref{eq3.8A}). Then 
$$v \in C([0, T]; C^2(\overline \Omega)) \cap B([0, T]; C^{2+\theta}(\overline \Omega)), $$
$\mathbb{D}^\alpha_{C(\overline \Omega)}v$ exists and coincides with $R\mathbb{D}^\alpha_{C(\partial \Omega)}g$, implying that $\mathbb{D}^\alpha_{C(\overline \Omega)}v$ belongs to 
$C^{\frac{\alpha \theta}{2}, \theta}([0, T] \times \overline \Omega)$. By Lemma \ref{le1.4}(III), $v \in C^{\frac{\alpha \theta}{2}}([0, T]; C^2(\overline \Omega))$, so that $A(\cdot, D_x) v$ belongs to 
$C^{\frac{\alpha \theta}{2}, \theta}([0, T] \times \overline \Omega)$. Subtracting $v$ to $u$, we are reduced to consider system (\ref{eq3.8}). Arguing as in the proof of Theorem \ref{th1.1}, we see that its solution $w$
 satisfies (B1)-(B2). Moreover, $f - \mathbb{D}^\alpha_{C(\overline \Omega)} v + A(\cdot,D_x) v$ belongs to $C^{\frac{\alpha \theta}{2}}([0, T]; C(\overline \Omega))$, $u_0 - R(\gamma u_0) \in D(A)$, if $\alpha \in (1, 2)$, 
 $u_1 - R(\gamma u_1) \in C_0^{\theta + 2(1 - \frac{1}{\alpha})}(\overline \Omega)$, 
 $$A(u_0  - R\gamma u_0) + f(0) - \mathbb{D}^\alpha_{C(\overline \Omega)}v(0) + A(\cdot,D_x) v(0) = A(\cdot,D_x) u_0 + f(0) -  \mathbb{D}^\alpha_{C(\overline \Omega)}v(0) \in C^\theta (\overline \Omega),$$
 $$\gamma [A(\cdot,D_x) u_0 + f(0) -  \mathbb{D}^\alpha_{C(\overline \Omega)}v(0)] = \gamma [A(\cdot,D_x) u_0 + f(0)] - \mathbb{D}^\alpha_{C(\partial \Omega)}g(0) = 0. 
 $$
 We deduce from Corollary \ref{co2.3}  that $\mathbb{D}^\alpha_{C(\overline \Omega)}w$ and $Aw = A(\cdot, D_x) w$ belong to $C^{\frac{\alpha \theta}{2}}([0, T]; C(\overline \Omega))$, so that $w$ satisfies (D1)-(D2). The conclusion is that $u = v + w$ satisfies (D1)-(D2). 
 
 $\square$
 
 \begin{remark}
 {\rm In case $\alpha = 1$, (D1)-(D2) imply that $u$ belongs to $C^{1+\frac{\theta}{2}}([0, T]; C(\overline \Omega))$, so that $u$ belongs to $C^{1+\frac{\theta}{2}, 2+\theta}([0, T] \times \overline \Omega )$. This suggest that in the general case $u$ should belong to $C^{\alpha+\frac{\alpha\theta}{2}, 2+\theta}([0, T] \times \overline \Omega )$.
 
  In case $\alpha \neq 1$, 
 $u$ may satisfy (D1)-(D2) without belonging to any space $C^{\alpha + \epsilon}([0, T]; C(\overline \Omega))$ for any $\epsilon$ positive. Consider the following example: let $\alpha \in (0, 2) \setminus \{1\}$.  
 Fix $f_0$ in $C^{2+\theta}_0(\overline \Omega) \setminus
 \{0\}$, $\theta \in (0, 2) \setminus \{1\}$,  and define
 $$
 \left\{ \begin{array}{l}
 u : [0, T] \times \overline \Omega \to \C, \\ \\
 u(t,x) = \frac{t^\alpha}{\Gamma (\alpha +1)} f_0(x). 
 \end{array}
  \right. 
 $$
 Then $u$ solves (\ref{eq1.1}), if we take $f(t,x) = f_0(x) - \frac{t^\alpha}{\Gamma (\alpha +1)} [A(\cdot,D_x) f_0](x)$, $g \equiv 0$, $D_t^k u(0,\cdot) = 0$ if $k \in \N_0$, $k < \alpha$. It is easily seen that in this case the assumptions (I)-(V) of Theorem \ref{th1.2} are satisfied. However, $u$ does not belong to any space $C^{\alpha + \epsilon}([0, T]; C(\overline \Omega))$, for any $\epsilon$ positive. 
 
 Nevertheless, let $v \in D(B_X^\alpha)$ be such that $B_X^\alpha v \in C^{\beta}([0, T]; X)$, with $\alpha + \beta, \beta \in \R^+ \setminus \N$. Then $v$ can be represented in the form
 $$
 v(t) = \sum_{k \in \N_0, k < [\beta]} t^{k+\alpha} v_k + w(t),
 $$
 with $v_k \in X$ for each $k$, $w \in C^{\alpha + \beta}([0, T]; X)$, $w^{(j)}(0) = 0$, for each $j$ in $\N_0$, $j < \alpha + \beta$ (see \cite{Gu3}, Proposition 12). 
 We deduce that in the situation of Theorem \ref{th1.2}, at least in case $\alpha(1 + \frac{\theta}{2}) \not \in \N_0$, the solution $u$ can be written in the form
 $$
 u(t) = U(t) + t^\alpha v_0, 
 $$
 with $v_0 \in C(\overline \Omega)$, $U \in C^{\alpha+\frac{\alpha\theta}{2}}([0, T]; C(\overline \Omega))$. 
 
 }
 \end{remark}

\end{document}